\theoremstyle{plain}
\newtheorem{Teo}{Theorem}[section]
\newtheorem{Lemma}[Teo]{Lemma}
\newtheorem{Prop}[Teo]{Proposition}
\newtheorem{Cor}[Teo]{Corollary}
\theoremstyle{definition}
\newtheorem*{DefProp}{Proposition/Definition}
\newtheorem{Def}[Teo]{Definition}
\newtheorem{Rem}[Teo]{Remark}
\newtheorem{Question}[Teo]{Question}
\newtheorem{Notation}[Teo]{Notation}
\numberwithin{equation}{Teo}
\newcommand{\cK}{\mathcal{K}}
\newcommand{\cV}{\mathcal{V}}
\newcommand{\NN}{\mathbb N}
\newcommand{\ZZ}{\mathbb Z}
\newcommand{\FF}{\mathbb F}
\newcommand{\newLyu}{\widetilde{\lambda}}
\newcommand{\NewLyu}{\widetilde{\Lambda}}
\newcommand{\newLyuName}{\text{Lyubeznik numbers in mixed characteristic}}
\newcommand{\newLyuNameSing}{\text{Lyubeznik number in mixed characteristic}}
\newcommand{\newLyuNameA}{\text{Lyubeznik number}}
\newcommand{\newLyuNameB}{\text{in mixed characteristic}}
\newcommand{\End}{\operatorname{End}} 
\newcommand{\Ext}{\operatorname{Ext}} 
\newcommand{\Ass}{\operatorname{Ass}}
\newcommand{\Hom}{\operatorname{Hom}} 
\newcommand{\Dim}{\operatorname{dim}} 
\newcommand{\Coker}{\operatorname{Coker}}
\newcommand{\Ker}{\operatorname{Ker}}
\newcommand{\IM}{\operatorname{Im}}
\newcommand{\Ann}{\operatorname{Ann}}
\newcommand{\InjDim}{\operatorname{inj.dim}}
\newcommand{\Depth}{\operatorname{depth}}
\newcommand{\Spec}{\operatorname{Spec}}
\newcommand{\Length}{\operatorname{length}}
\newcommand{\LEX}{\operatorname{lex}}   
\newcommand{\Inf}{\operatorname{inf}}
\newcommand{\Char}{\operatorname{char}}
\newcommand{\connarrow}{\ar `[d] `[l] `[llld] `[ld] [lld]}
\newcommand{\FDer}[1]{\stackrel{#1}{\to}}
\newcommand{\surj}{\twoheadrightarrow}
\newcommand{\deriv}[2]{\frac{\partial^{#2}}{\partial {#1}^{#2}}}
\newcommand{\LC}[3]{H^{#1}_{#2}\left(#3\right)}
\begin{document}

\title{Lyubeznik numbers in mixed characteristic}
\author{Luis N\'u\~nez-Betancourt and Emily E.\ Witt}

\maketitle


\begin{abstract}

This manuscript defines a new family of invariants, analogous to the \emph{Lyubeznik numbers}, associated to \emph{any} local ring whose residue field has prime characteristic. 
In particular, as their nomenclature suggests, these \emph{$\newLyuNameA$s $\newLyuNameB$} are defined for all local rings of mixed characteristic.  
Some properties similar to those in equal characteristic
hold for these new invariants.  Notably, the ``highest" $\newLyuNameSing$ is a well-defined notion.
Although the $\newLyuName$ and their equal-characteristic counterparts
are the same for certain local rings of equal characteristic $p>0$, we also provide an example where they differ.

\end{abstract}

\section{Introduction}

\subsection{Background}

Let $I$ be an ideal of a regular local ring $S$ of equal characteristic.  When $S$ has prime characteristic, Huneke and Sharp proved that the Bass numbers of every local cohomology module of the form $H^i_I(S)$ are finite \cite{Huneke}.   Subsequently, Lyubeznik showed this for $S$ of equal characteristic zero using $D$-module theory.  He  also extended these finiteness results to a larger family of functors that include iterated local cohomology modules of the form $H_{I_1}^{i_1} \cdots H_{I_\ell}^{i_\ell}(S)$, for $I_1, \ldots, I_\ell$ ideals of $S$ and $i_1, \ldots, I_\ell \in \NN$.  Using these results, he defined families of invariants that are now called \emph{Lyubeznik numbers} \cite{LyuDmodules}.  

Suppose that $(R, m, K)$ is a local ring admitting a surjection from an $n$-dimensional local regular local ring $(S, \eta, K)$ containing a field.  If $I$ is the kernel of this surjection, the Lyubeznik numbers of $R$, depending on two nonnegative integers $i$ and $j$, are defined as $\lambda_{i,j} (R) := \Dim_K \Ext^i_S\left(K,H^{n-j}_I (S)\right)$.  Remarkably, these numbers only depend on the ring $R$ and on $i$ and $j$, not on $S$, nor even on the choice of surjection from $S$.  Moreover, if $R$ is \emph{any} local ring containing a field, letting $\lambda_{i,j} (R) := \lambda_{i,j} (\widehat{R})$ extends the definition, making the Lyubeznik numbers well defined for every such ring \cite{LyuDmodules}.  

For $R$ containing a field, the Lyubeznik numbers of $R$ provide essential information about the ring.  For example, if $d=\dim R$, then $\lambda_{i,j}(R) = 0$ for $j>d$, and $\lambda_{d,d}(R)\neq 0$ \cite[Properties 4.4i, 4.4iii]{LyuDmodules}.  If $R$ is a Cohen-Macaulay ring, then $\lambda_{d,d}(R) = 1$ \cite[Theorem 1]{Kawasaki}.  Moreover, the Lyubeznik numbers have extensive connections with geometry and topology, including \'etale cohomology and the connected components of certain punctured spectra.  (See, for example, \cite{B-B, GarciaSabbah, Kawasaki2, Walther2, W}.)

\subsection{Aim and Main Results}

The aim of this manuscript is to define a new invariant, analogous to the Lyubeznik numbers, which are associated to \emph{any} local ring whose residue field has prime characteristic.  In particular, these new invariants are defined for local rings of mixed characteristic.  Moreover, we study properties of the $\newLyuName$, and investigate when they agree with the equal-characteristic Lyubeznik numbers, as well as whether they ever differ from them.
 
If $S$ is a regular local ring of unramified mixed characteristic, then Lyubeznik, along with N\'u\~nez-Betancourt, have shown that the Bass numbers are also finite \cite{LyuUMC, Nunez}.  This enables us to give the following definition:

\begin{DefProp}[\ref{WDMC}, \ref{LyuNumsGen}]
Let $(R,m,K)$ be a local ring  such that $\Char(K)=p>0$, and let $\widehat{R}$ denote its completion. By the Cohen Structure Theorems, 
$\widehat{R}$ admits a surjective ring homomorphism $\pi : S\surj \widehat{R}$,  
where $S$ is an unramified regular local ring of mixed 
characteristic. Let $I=\Ker(\pi)$ and $n=\dim(S)$. Then, given integers $i,j \geq 0$, $\dim_K \Ext^{i}_S(K,H^{n-j}_{I} (S))$ is finite and depends only on $R$, $i$, and $j$.  
We define the \emph{$\newLyuName$ of $R$ with respect to $i$ and $j$} as
$$\newLyu_{i,j} (R):=\dim_K \Ext^{i}_S (K,H^{n-j}_{I} (S)) .$$ 
\end{DefProp}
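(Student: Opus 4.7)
The plan is to establish the two assertions separately: finiteness, then independence of the value from the choice of surjection $\pi:S\surj\widehat R$. Finiteness is immediate: the quantity $\dim_K\Ext^i_S(K,H^{n-j}_I(S))$ is the $i$-th Bass number of $H^{n-j}_I(S)$ at the maximal ideal $\eta$ of $S$, and by the cited results of Lyubeznik \cite{LyuUMC} and N\'u\~nez-Betancourt \cite{Nunez}, all Bass numbers of local cohomology modules of unramified regular local rings of mixed characteristic are finite.

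For independence, suppose $\pi_\alpha:S_\alpha\surj\widehat R$ are two presentations with $I_\alpha=\Ker(\pi_\alpha)$ and $n_\alpha=\dim S_\alpha$ for $\alpha\in\{1,2\}$. I would invoke the Cohen Structure Theorem in unramified mixed characteristic to write $S_\alpha\cong V[[\mathbf{x}_\alpha]]$, where $V$ is a Cohen ring of $K$ and $\mathbf{x}_\alpha$ is a tuple of $n_\alpha-1$ variables. Form the common overring $S:=V[[\mathbf{x}_1,\mathbf{x}_2]]$, again unramified regular local of mixed characteristic, and define $\pi:S\surj\widehat R$ by $\pi(\mathbf{x}_\alpha):=\pi_\alpha(\mathbf{x}_\alpha)$, so that $\pi|_{S_\alpha}=\pi_\alpha$ for each $\alpha$. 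After a triangular change of variables---replacing each $x_{3-\alpha,j}$ by $x_{3-\alpha,j}-\widetilde b_{\alpha,j}$ for some lift $\widetilde b_{\alpha,j}\in S_\alpha$ of $\pi_\alpha(x_{3-\alpha,j})$---one may assume $S=S_\alpha[[\mathbf{y}_\alpha]]$ and $\Ker(\pi)=I_\alpha S+(\mathbf{y}_\alpha)$, with $\mathbf{y}_\alpha$ a regular sequence on $S/I_\alpha S$. By symmetry in $\alpha$, it therefore suffices to prove the following key identity: for $S$ unramified regular local of mixed characteristic, $I\subseteq S$ an ideal, $S':=S[[y_1,\dots,y_t]]$, $I':=IS'+(y_1,\dots,y_t)$, and $n':=\dim S'=n+t$, one has
\[
\Ext^i_{S'}\bigl(K,H^{n'-j}_{I'}(S')\bigr)\cong \Ext^i_{S}\bigl(K,H^{n-j}_{I}(S)\bigr).
\]

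To prove this key identity, I would use that $(\mathbf{y})$ is a regular sequence in the $S$-flat ring $S'$, so the Grothendieck spectral sequence $H^p_{IS'}(H^q_{(\mathbf{y})}(S'))\Rightarrow H^{p+q}_{I'}(S')$ degenerates to yield $H^{n'-j}_{I'}(S')\cong H^{n-j}_{IS'}(H^t_{(\mathbf{y})}(S'))$. Combining flat base change along $S\hookrightarrow S'$ with the Koszul complex computing $\Ext^*_{S'}(K,-)$---which is the tensor product of the Koszul complex on a regular system of parameters of $S$ with the Koszul complex on $\mathbf{y}$---then identifies the two Ext groups.

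The main obstacle will be this last step: verifying rigorously that the interaction between the local cohomology functor at $I$ and the auxiliary regular sequence $\mathbf{y}$ produces the claimed Ext isomorphism. The argument is a standard change-of-rings maneuver that parallels Lyubeznik's original independence proof in equal characteristic \cite{LyuDmodules}; it uses only the regular-sequence property of $\mathbf{y}$ and the flatness of $S\hookrightarrow S'$, so no characteristic-specific machinery such as $D$-modules or Frobenius is invoked. Once finiteness of the relevant Bass numbers, established above, is in hand, the argument transfers verbatim to the mixed-characteristic setting.
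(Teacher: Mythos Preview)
Your reduction is exactly the paper's: complete, realize both presentations as power series rings over the common Cohen ring $V$, form the common overring $V[[\mathbf x_1,\mathbf x_2]]$, and change coordinates so that the kernel becomes $I_\alpha$ extended plus the new variables; the paper likewise distills everything to your ``key identity'' (its Lemma~\ref{LemmaPowerSeries} and Corollary~\ref{CorPowerSeries}). Where you diverge is in how that identity is proved. The paper routes it through the functor $G(-)=(-)\otimes_S S[[y]]_y/S[[y]]$ of Section~\ref{KeyFunctorSection}, invoking two facts from \cite{NWEqual}: that $G(H^j_I(S))\cong H^{j+1}_{(I,y)}(S[[y]])$, and that for $S$ Gorenstein one has $\Ext^i_{S[[y]]}(M,G(N))\cong\Ext^i_S(M,N)$, the latter established by showing that $G$ carries injective $S$-modules to injective $S[[y]]$-modules. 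Your spectral-sequence plus Koszul argument recovers the first fact directly and handles the second in the special case $M=K$: each $y_j$ acts surjectively on $N\otimes_S H^t_{(\mathbf y)}(S')$ with kernel cutting down by one variable, so the $\mathbf y$-part of the Koszul cocomplex collapses to $N$ in degree~$0$, and the remaining Koszul complex on a regular system of parameters of $S$ then computes $\Ext^i_S(K,N)$. This is correct and somewhat more elementary---no $G$-formalism, no injectivity preservation---at the cost of yielding the Ext comparison only for $M=K$, which is all that is needed here. One small correction to your last paragraph: Lyubeznik's original equal-characteristic argument, like this paper's, does pass through the $G$-functor and its $D(S,R)$-module interpretation, so your Koszul approach is in fact a shade more elementary than the proof you say it ``parallels.''
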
 

When $R$ is a ring of equal characteristic $p>0$, 
both the Lyubeznik numbers and the $\newLyuName$ are defined.
We give some conditions for which these two definitions agree.
However, in Theorem \ref{TeoCE}, we prove that these numbers are not the same in general. 

Motivated by analogous properties of the Lyubeznik numbers in equal characteristic, we study properties of these invariants (cf.\ \cite[Properties $4.4$]{LyuDmodules}).  Some similar vanishing properties hold (see Proposition \ref{PropertiesLyuMC}), as well as analogous computations for complete intersection rings (see Proposition \ref{PropCI}).
Moreover, the ``highest" $\newLyuNameSing$ of a local ring for which these invariants are defined
is a well-defined notion:  if $(R, m,K)$ is a local ring of dimension $d$ such that $\Char(R)=p>0$, $\newLyu_{i,j}(R) = 0$ if either $i>d$ or $j>d$ (see Theorem \ref{TeoInjDim} and Definition \ref{highest}).

We investigate conditions on a local ring $R$ of equal characteristic $p>0$ (i.e., when both the Lyubeznik numbers and the $\newLyuName$ of $R$ are defined) under which $\newLyu_{i,j}(R) = \lambda_{i,j}(R)$ for all $i,j\in \NN$.  We find that they agree for $R$ Cohen-Macaulay, and $R$ of dimension less than or equal to two (see Corollary \ref{Agree}).

Moreover, we give a specific example for which $\newLyu_{i,j}(R) \neq \lambda_{i,j}(R)$ for some $i,j\in\NN$, employing the work on Bockstein morphisms of Singh and Walther, as well as a computation of \`Alvarez Montaner and Vahidi \cite{Bockstein, LyuNumMontaner} (see Remark \ref{RmkDiff} and Theorem \ref{TeoCE}).

\subsection{Outline}  
We recall some definitions and properties of $D$-modules in Section \ref{Preliminaries}.  In addition, we introduce a functor that will play a fundamental 
role in proving that the $\newLyuName$ are well defined. In Section \ref{SecDef}, we define the $\newLyuName$, and prove some similar properties to those that hold for the Lyubeznik numbers in equal characteristic.  
Next, in Section \ref{HighestLyuNum}, we prove in Theorem \ref{TeoInjDim} that, with our definition, if $d = \dim R$, $\newLyu_{d,d}(R)$ is again the ``highest" Lyubeznik number.
The goal of Section \ref{SameNum} is to prove that the original Lyubeznik numbers and $\newLyuName$ are the same for certain rings of characteristic $p>0$. 
On the other hand, in Section \ref{DifferentNum}, we give an example where these numbers differ.

\section{Preliminaries}\label{Preliminaries} 

\subsection{$D$-modules}

Here, we briefly review some facts about $D$-modules that will be used; we refer readers to \cite{Bj2,Coutinho} for a more detailed exposition.

If $R \subseteq S$ is an extension of commutative rings, the \emph{$R$-linear differential operators of $S$}, denoted $D(S,R)$, is the (non-commutative) subring of $\End_R(S)$ defined inductively as follows:
\begin{itemize}
\item The \emph{operators of order zero} are the endomorphisms defined by multiplication by some element of $S$, and
\item $\phi \in \End_R(S)$ is an \emph{operator of order less than or equal to $\ell$} if $[\phi, \sigma] = \phi \cdot \sigma - \sigma \cdot \phi$ is an operator of order less than or equal to $\ell-1$ for every $\sigma \in S$.
\end{itemize}  
If $S = R[[x_1,\ldots,x_n]]$ for some $n \in \NN$, then \[D(S,R)=S\left \langle\frac{1}{t!} \deriv{x_i}{t} \ | \ t \in \NN, i = 1, \ldots, n \right \rangle  \subseteq \Hom_R(S,S)\]  \cite[Theorem 16.12.1]{EGA}.

The canonical map $M \to M_f$ is a map of $D(S,R)$-modules for any $D(S,R)$-module $M$, and any $f \in S$.  Since $S$ is a $D(S,R)$-module, this map induces a natural $D(S,R)$-module structure on any local cohomology module of the form $H^j_I(S)$, where $I$ is an ideal of $S$ and $j \in \NN$.  Moreover, it is not hard to see that this also makes $H^{j_\ell}_{I_s} \ldots H^{i_2}_{I_2}H^{i_1}_{I_1}(S)$ a $D(S,R)$-module as well \cite[Example 2.1(iv)]{LyuDmodules}.  Given a field $K$ and any $f \in S$, $S_f$ finite length in the category of $D(S,K)$-modules.  Thus, $H^{j_\ell}_{I_s} \ldots H^{i_2}_{I_2}H^{i_1}_{I_1}(S)$ does as well \cite[Corollary 6]{Lyu2}.

\subsection{A key functor} \label{KeyFunctorSection}
Lyubeznik introduced a certain functor in proving that the equal-characteristic Lyubeznik numbers are well defined (cf.\ \cite[Lemma $4.3$]{LyuDmodules}). 
In \cite[Section 2.2]{NWEqual}, the authors study further properties of this functor.  As we will rely greatly on some of these properties to prove that the $\newLyuNameA$s $\newLyuNameB$ are well defined, we include them here for the reader's convenience.  Please refer to \emph{loc.\ cit.} for all details and proofs.

Fix a Noetherian ring $R$, and let $S = R[[x]]$.  Let $G$ denote the functor from the category of $R$-modules to that of $S$-modules given by $G(-) = (-) \otimes_R S_x/S$.
Fix an $R$-module $M$.
Since $G(M)=M \otimes_R S_x/S = \bigoplus \limits_{\alpha \in \NN} \left(M \otimes R x^{-\alpha}\right)$, 
any $u \in G(M)$ can be written uniquely in the form $u= m_1 \otimes x^{-\alpha_1} + \ldots + m_\ell \otimes x^{-\alpha_\ell}$ for some $\ell, \alpha_i, m_i\in\NN$.  

Actually, $G$ is an equivalence between the category of $R$-modules to that of $D(S,R)$-modules supported on the $\cV \mathcal(xS)$, the Zariski closed subset of $\Spec (S)$ given by $xS$.
The $D(S,R)$-module structure of $G(M)$ is defined by the action of each $\frac{1}{t!} \deriv{x}{t}$, $t \in \mathbb{N}$:
$ \label{daction} \left(\frac{1}{t!}  \deriv{x}{t} \right)\cdot (m\otimes x^{-\alpha})=  \binom{\alpha+t-1}{t} \cdot \left((-1)^{t} m\otimes
x^{-\alpha-t}\right).$  The functor $G$ is also flat, and commutes with taking local cohomology.

For every ideal $I$ of $R$ and every $j \in\NN$, $G\left(H^j_I(M)\right)\cong H^{j+1}_{(I,x)S}(M\otimes_R S)$.  Moreover, if $R$ is Gorenstein, $P$ is a prime ideal of $R$, and $E_R(R/P)$ denotes the injective hull of $R/P$ over $R$, $G(E_R(R/P))=E_S (S/(P,x)S)$. In fact, in this case, $M$ is an injective $R$-module if and only if $G(M)$ is an injective $S$-module.  Using this fact, it can also be shown that, endowing $R=S/xS$ with the natural $S$-module structure via extension of scalars, if $N$ is also an $R$-module, $\Ext^{i}_S (M,G(N))=\Ext^{i}_R (M,N)$ for all $j \in \NN$.

\section{Definition and properties}\label{SecDef}
\begin{Lemma}\label{LemmaPowerSeries}
Let $S=R[[x]]$, where $(R,m,K)$ is a Gorenstein local ring.
Then for every ideal $I$ of $R$, and all $i, j\in \NN$, $$\dim_K\Ext^{i}_S(K,H^{j+1}_{(I,x)S} (S))=\dim_K\Ext^{i}_R(K,H^j_{I} (R)).$$
\end{Lemma}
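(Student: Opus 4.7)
The plan is to apply the two properties of the functor $G(-) = (-)\otimes_R S_x/S$ recalled in Section \ref{KeyFunctorSection} essentially as a black box, so the work is almost entirely in lining up the bookkeeping.

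First, I would apply the local-cohomology property of $G$ to $M = R$. Since $G\bigl(H^j_I(M)\bigr) \cong H^{j+1}_{(I,x)S}(M\otimes_R S)$ for every $R$-module $M$ and every $j \in \NN$, taking $M=R$ gives
$$G\bigl(H^j_I(R)\bigr) \cong H^{j+1}_{(I,x)S}(S).$$
Thus the $S$-module appearing on the left-hand side of the desired identity is, canonically, $G(N)$ for the $R$-module $N := H^j_I(R)$.

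Second, I would invoke the $\Ext$-comparison property: for any $R$-module $N$ and any $S$-module $M$ whose $S$-structure comes from $R = S/xS$ by extension of scalars, one has $\Ext^{i}_S(M, G(N)) = \Ext^{i}_R(M, N)$ for all $i$. The residue field $K$ of $R$ is also the residue field of $S$, and its $S$-module structure factors through $S/xS = R$, so it is admissible as the module $M$ here. Applying this with $M = K$ and $N = H^j_I(R)$ yields
$$\Ext^{i}_S\bigl(K, H^{j+1}_{(I,x)S}(S)\bigr) \;=\; \Ext^{i}_S\bigl(K, G(H^j_I(R))\bigr) \;=\; \Ext^{i}_R\bigl(K, H^j_I(R)\bigr).$$
Taking $K$-vector-space dimensions then gives the claimed equality.

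There is really no main obstacle: the proof is a two-line chain once the two properties of $G$ are in hand. The only point meriting a comment is that the Gorenstein hypothesis on $R$ is what guarantees, via $G(E_R(R/P)) = E_S(S/(P,x)S)$, that $G$ sends injective $R$-modules to injective $S$-modules; this is what underlies the $\Ext$-comparison property and is why that hypothesis appears in the statement. I would not carry out the verification of the $G$-properties themselves, since they are quoted from \cite{NWEqual} and summarized in Section \ref{KeyFunctorSection}.
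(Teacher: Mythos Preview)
Your proof is correct and essentially identical to the paper's own argument: both apply the identity $G(H^j_I(R)) \cong H^{j+1}_{(I,x)S}(S)$ and then the $\Ext$-comparison $\Ext^i_S(M,G(N)) = \Ext^i_R(M,N)$ (valid because $R$ is Gorenstein) with $M=K$ and $N=H^j_I(R)$. Your additional remarks on why $K$ is admissible and on the role of the Gorenstein hypothesis are accurate elaborations but not new content.
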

\begin{proof}

Let $G$ be the functor defined in Section \ref{KeyFunctorSection}.  Now, $G(H^j_{I} (R)) = H^{j+1}_{(I,x)} (S)$ by \cite[Lemma 3.9]{NWEqual}. Since $R$ is Gorenstein, \cite[Proposition 3.12]{NWEqual} indicates that 
$\Ext^{i}_S (M,G(N))=\Ext^{i}_R (M,N)$ for all $R$-modules $M$ and $N$.  Thus, $\Ext^{i}_S(K,H^{j+1}_{(I,x)S} (S))=\Ext^{i}_R(K,H^j_{I} (R)),$ and we are done.

\end{proof}

\begin{Cor} \label{CorPowerSeries}
Let $(R,m,K)$ be a Gorenstein local ring, and let $S=R[[x_1, \ldots, x_n]]$ denote a power series ring over $R$. For every ideal $I$ of $R$ and all $i,j\in \NN$, $$\dim_K\Ext^{i}_S(K,H^{j+n}_{(I,x_1, \ldots, x_n)S} (S))=\dim_K\Ext^{i}_R(K,H^j_{I} (R)).$$
\end{Cor}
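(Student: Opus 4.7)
The natural strategy is a straightforward induction on $n$, reducing to Lemma \ref{LemmaPowerSeries} one variable at a time. The base case $n=1$ is exactly Lemma \ref{LemmaPowerSeries}. For the inductive step, I would set $T := R[[x_1, \ldots, x_{n-1}]]$, so that $S = T[[x_n]]$, and aim to apply the lemma to the extension $T \subseteq S$ while invoking the inductive hypothesis for $T$ over $R$.

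For this to work, I first need to verify two compatibility points. First, $T$ must be a Gorenstein local ring whose residue field is $K$: Gorenstein-ness is preserved under passage to a formal power series ring over a Gorenstein local ring, and the maximal ideal of $T$ is $(m, x_1, \ldots, x_{n-1})T$ with residue field $T/(m, x_1, \ldots, x_{n-1})T \cong R/m = K$. Second, I need the ideal to decompose compatibly: $(I, x_1, \ldots, x_n)S = \bigl((I, x_1, \ldots, x_{n-1})T, x_n\bigr)S$, so that Lemma \ref{LemmaPowerSeries} applied with $R$ replaced by $T$ and $I$ replaced by $J := (I, x_1, \ldots, x_{n-1})T$ yields
\[
\dim_K \Ext^i_S\bigl(K, H^{j+n}_{(I, x_1, \ldots, x_n)S}(S)\bigr) = \dim_K \Ext^i_T\bigl(K, H^{j+n-1}_{J}(T)\bigr).
\]

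Then the inductive hypothesis, applied to the Gorenstein local ring $R$, the power series ring $T = R[[x_1, \ldots, x_{n-1}]]$, the ideal $I$, and the shift index $j$, gives
\[
\dim_K \Ext^i_T\bigl(K, H^{j+n-1}_{(I, x_1, \ldots, x_{n-1})T}(T)\bigr) = \dim_K \Ext^i_R\bigl(K, H^{j}_{I}(R)\bigr).
\]
Chaining the two displayed equalities yields the claim.

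I do not anticipate any real obstacle: the argument is essentially bookkeeping, and the only subtlety is confirming that Gorenstein-ness of the residue field data propagates through each stage $R \mapsto R[[x_1]] \mapsto R[[x_1, x_2]] \mapsto \cdots$, so that Lemma \ref{LemmaPowerSeries} applies at every inductive step. Once these structural checks are in hand, the proof is essentially a one-line telescoping argument.
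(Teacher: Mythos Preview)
Your proposal is correct and follows exactly the approach in the paper, which simply says ``Using Lemma \ref{LemmaPowerSeries}, apply induction on $n$.'' Your write-up supplies the routine verifications (Gorenstein-ness and residue field of the intermediate ring $T$, and the ideal decomposition) that the paper leaves implicit, but the strategy is identical.
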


\begin{proof}
Using Lemma \ref{LemmaPowerSeries}, apply induction on $n$.
\end{proof}

\begin{Rem}[\cite{Cohen}] \label{Cohen} 
If $(V, pV, K)$ and $(W, pW, K)$ are both complete Noetherian DVRs with residue class field $K$, then $V$ and $W$ are isomorphic (via a non-unique isomorphism). 

By the Cohen Structure Theorems, if $(R, m, K)$ is a complete local ring of mixed characteristic $p>0$, $R$ is the homomorphic image of a ring of the form $V[[x_1, \ldots, x_n]]$, where $(V, pV, K)$ is an (unramified) mixed characteristic complete Noetherian discrete valuation domain.  Therefore, if $V[[x_1, \ldots, x_n]] \surj R$ and $W[[y_1, \ldots, y_{n'}]] \surj R$, then $V \cong W.$  (In fact, we can take $n = n'$ to be the embedding dimension of $R/pR$.)

If $(R,m,K)$ is a complete local ring of equal characteristic $p>0$, $R$ is a homomorphic image of some $K[[x_1, \ldots, x_n]]$ by the Cohen Structure Theorems.  Therefore, if $(V, pV, K)$ is the complete Noetherian DVR provided above, the composition $V[[x_1, \ldots, x_n]] \surj K[[x_1, \ldots, x_n]]\surj R$ is a surjection. 

Thus, any complete local ring $(R, m, K)$ such that $char(K) = p > 0$ is the homomorphic image of $V[[x_1, \ldots, x_n]]$, where $(V, pV, K)$ is a uniquely determined mixed characteristic complete Noetherian discrete valuation domain.
\end{Rem}

\begin{Prop}\label{WDMC}
Let $(R, m, K)$ be a local ring such that $char(K) = p>0$, admitting a surjection $\pi : S\surj R$, where $S$ is an unramified regular local ring of dimension $n$.
Let $I = \Ker(\pi)$, and take $i,j\in\NN.$  Then
$$\dim_K\Ext^{n-i}_S(K,H^j_{I} (S))$$ is finite and
depends only on $R$, $i$, and $j$, but not on $S$, nor on $\pi$.
\end{Prop}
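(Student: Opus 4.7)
The plan is to follow Lyubeznik's template in equal characteristic: reduce to the complete case, exploit the uniqueness statement from Remark \ref{Cohen}, and then compare any two surjections from unramified regular local rings by embedding both into a common larger power series ring.

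First I would pass to completions. Because $K$ is annihilated by the maximal ideal of $S$, $\Ext^{n-i}_S(K, H^j_I(S))$ is a $K$-vector space whose dimension is unchanged after base change to $\widehat{S}$; simultaneously $\widehat{S} \surj \widehat{R}$ with kernel $I\widehat{S}$, so we may assume from the start that $R$ and $S$ are complete. Under this assumption, finiteness of $\dim_K \Ext^{n-i}_S(K, H^j_I(S))$ is the Bass-number finiteness theorem of Lyubeznik and N\'u\~nez-Betancourt for local cohomology modules of unramified regular local rings of mixed characteristic \cite{LyuUMC, Nunez}.

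For the well-definedness, let $\pi_1 \colon S_1 \surj R$ and $\pi_2 \colon S_2 \surj R$ be two such surjections. By Remark \ref{Cohen}, both have the form $V[[x_1,\ldots,x_{n_1}]] \surj R$ and $V[[y_1,\ldots,y_{n_2}]] \surj R$ for the \emph{same} uniquely determined complete unramified mixed-characteristic DVR $V$. Form $T := V[[x_1,\ldots,x_{n_1}, y_1,\ldots,y_{n_2}]]$ and extend to a surjection $\pi \colon T \surj R$ by $x_j \mapsto \pi_1(x_j)$ and $y_i \mapsto \pi_2(y_i)$. Choose lifts $s_i \in S_1$ of $\pi_2(y_i) \in R$ and substitute $z_i := y_i - s_i$; this identifies $T$ with the power series ring $S_1[[z_1,\ldots,z_{n_2}]]$, and under this identification $\pi$ factors as $T \surj S_1 \surj R$, where the first map sends $z_i \mapsto 0$ and the second is $\pi_1$. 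Consequently $\Ker(\pi) = (I_1, z_1, \ldots, z_{n_2})T$.

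Since $S_1$ is regular, hence Gorenstein, Corollary \ref{CorPowerSeries} applied with base ring $S_1$, ideal $I_1$, and power series extension $T = S_1[[z_1,\ldots,z_{n_2}]]$ yields
\[ \dim_K \Ext^a_T(K, H^{b+n_2}_{\Ker(\pi)}(T)) = \dim_K \Ext^a_{S_1}(K, H^b_{I_1}(S_1)) \]
for every $a, b \in \NN$. Matching indices shows the invariant computed via $\pi_1$ coincides with that computed via $\pi$; by symmetry the same holds for $\pi_2$, so the quantities obtained from $\pi_1$ and $\pi_2$ agree. The main obstacle is precisely this setup of the common extension and the change of variables that makes $\pi$ factor through $\pi_1$ and (symmetrically) through $\pi_2$; once that is in place, the result drops out of Corollary \ref{CorPowerSeries} and Remark \ref{Cohen}.
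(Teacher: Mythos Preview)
Your argument is correct and follows essentially the same route as the paper: reduce to the complete case, use Remark \ref{Cohen} to place both presentations over the same DVR $V$, build a common enlargement $T$, perform a change of variables so that $T$ is a power series ring over $S_1$ with $\Ker(\pi)=(I_1,z_1,\ldots,z_{n_2})T$, and invoke Corollary \ref{CorPowerSeries} plus symmetry. The paper's presentation differs only cosmetically, phrasing the change of variables via an explicit splitting $\beta$ of the inclusion $\alpha\colon S\to S''$ rather than directly identifying $T$ with $S_1[[z_1,\ldots,z_{n_2}]]$.
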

\begin{proof}
Each $\dim_K\Ext^{i}_S(K,H^{n-j}_{I} (S))$ is finite (cf.\ \cite{LyuUMC,Nunez}), so it remains to prove that these numbers are well defined.
Let $\pi' : S'\surj R$ be another surjective map, where $S'$ is an unramified regular local ring of dimension $n'$.
Set $I' = \Ker(\pi')$, 
and let $m'$ be the maximal ideal of $S'$.
Since the Bass numbers with respect to the maximal ideal are not affected by completion, we may assume that
$R$, $S$, and $S'$ are complete. 
Let $V$ denote the complete DVR associated to $K$ given by the Cohen Structure Theorems (see Remark \ref{Cohen}), so that we can make identifications $S =  V[[x_1,\ldots,x_{n-1}]]$
 and $S'= V[[y_1,\ldots,y_{n'-1}]]$. 
 
Let $S''=V[[z_1,\ldots,z_{n+n'-2}]]$, and 
let $\pi'': S''\surj R$ be the surjective map defined by $\pi''(z_j)=\pi(x_j)$ for $1\leq j\leq n-1$
and $\pi''(z_j)=\pi'(y_{j-n+1})$ for $n\leq j\leq n+n'-2$.
Let $I''$ be the preimage of $I$ under $\pi''$, respectively. Let
 $\alpha:S\to S''$ be the map defined by $\alpha(x_j)=z_j$. Note that $\pi'' \circ \alpha=\pi.$
Since $\pi'$ is surjective, there exist $f_1,\ldots, f_{n'-1}\in S$ such that $\pi''(z_{n-1+j})=\pi(f_j)$ for $j\leq n'-1.$
Then $z_{n-1+j}-\alpha(f_j)\in\Ker(\pi'').$ We note that $\beta:S''\to S$, defined by sending
$z_j$ to $x_j$ for $j\leq n-1$, and $z_{n-1+j}$ to $f_j$ for $j\leq n'-1$, is a splitting of $\alpha.$
Then
 $I''=(I, z_{n}-\alpha(f_1),\ldots, z_{n'+n-2}-\alpha(f_{n'-1}))S''$. 
Since 
$$z_1,\ldots, z_{n-1}, z_{n}-\alpha(f_1),\ldots, z_{n'+n-2}-\alpha(f_{n'-1})$$ form a regular system of parameters, we obtain, by Corollary \ref{CorPowerSeries}, that 
$$
\dim_K\Ext^{i}_{S''} (K,H^{n+n'-j}_{I''}  (S''))=\dim_K\Ext^{i}_S (K,H^{n-j}_{I} (S)).
$$
By an analogous argument, we also have that
$$
\dim_K\Ext^{i}_{S''} (K,H^{n+n'-j}_{I''}  (S''))=\dim_K\Ext^{i}_{S'} (K,H^{n'-j}_{I'} (S')),
$$
and the result follows.
\end{proof}

\begin{Def}[$\newLyuName$] \label{LyuNumsGen}
Let $(R,m,K)$ be a local ring  such that $\Char(K)=p>0$.  By the Cohen Structure Theorems, the completion
$\widehat{R}$ admits a surjection $\pi : S\surj \widehat{R}$,  
where $S$ is an unramified regular local ring of mixed 
characteristic. Let $I=\Ker(\pi)$, $n=\dim(S)$, and $i,j\in\NN$. We define the \emph{$\newLyuNameSing$ of $R$ with respect to $i$ and $j$} as
$$\newLyu_{i,j} (R):=\dim_K\Ext^{i}_S(K,H^{n-j}_{I} (S)).$$ 
\end{Def}  

\noindent Note that by Proposition \ref{WDMC}, the $\newLyu_{i,j} (R)$ are well defined and depend only on $R$, $i$, and $j$.

\begin{Rem} In Definition \ref{LyuNumsGen}, we need to take the completion of $R$ for the Cohen Structure Theorems to ensure the existence of a surjection from an unramified regular local ring $S$ of mixed characteristic, $\pi : S\surj \widehat{R}$.
If such a map exists without taking the completion, then $\newLyu_{i,j}(R)=\dim_K\Ext^{i}_{\widehat{S}}(K,H^{n-j}_{I\widehat{S}} (\widehat{S}))=\dim_K\Ext^{i}_S(K,H^{n-j}_{I} (S))$, where $I=\Ker(\pi)$.
\end{Rem}

\begin{Rem} \label{DefnAgree}
Fix $(R,m,K)$, a local ring of equal characteristic $p>0$.  
On one hand, there exists a surjection from an $n$-dimensional unramified regular local ring of mixed characteristic, $\pi : S\surj \widehat{R}$. 
On the other hand, the induced map $\pi' : S/pS\surj \widehat{R}$ is also surjective.
If $I=\Ker(\pi)$ and $I'=\Ker(\pi')$,
 $I$ is the preimage of $I'$ under the canonical surjection
$S\surj S/pS$. 
There are two notions of Lyubeznik numbers corresponding to these homomorphisms, those given by Lyubeznik's original definition for rings of equal characteristic, $\lambda_{i,j}(R) = \dim_K\Ext^{i}_{S/pS} (K,H^{n-j-1}_{I'} (S/pS))$ \cite{LyuDmodules}, and the $\newLyuName$, $\newLyu_{i,j}(R) = \dim_K\Ext^{i}_S(K,H^{n-j}_{I} (S)).$ 
\end{Rem}

\noindent Remark \ref{DefnAgree} naturally incites the following question:

\begin{Question}\label{QuestionCharp}

Is $\newLyu_{i,j}(R) = \lambda_{i,j}(R)$ whenever both are defined, i.e., for every local ring $(R,m,K)$ of equal characteristic $p>0$ and all $i,j\in\NN$?  In other words, with notation as in Remark \ref{DefnAgree}, is it always true that $\dim_K\Ext^{i}_{S/pS} (K,H^{n-j-1}_{I'} (S/pS))=\dim_K\Ext^{i}_S(K,H^{n-j}_{I} (S))?$

\end{Question}
In Corollary \ref{Agree}, we prove an affirmative answer to Question \ref{QuestionCharp} for certain rings. However, 
there are cases in which the answer is negative:  Theorem \ref{TeoCE} presents such an example, a Stanley-Reisner ring over a field of characteristic two.

The $\newLyuName$ satisfy some similar vanishing properties to those that hold for the equal characteristic ones:

\begin{Prop} \label{PropertiesLyuMC}
Let $(R,m,K)$ be a local ring  such that $\Char(K)=p>0$ and $d=\dim(R)$. 
Then
\begin{itemize}
\item[\rm{(i)}] $\newLyu_{i,j} (R)=0 $ for $j>d$,
\item[\rm{(ii)}] $\newLyu_{i,j} (R)=0 $ for $i>j+1$, and
\item[\rm{(iii)}] $\newLyu_{d,d}(R)\neq 0$.
\end{itemize}
\end{Prop}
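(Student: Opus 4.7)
The approach is to address the three claims separately: for (i), a grade-theoretic vanishing; for (ii), an injective-dimension bound specific to the mixed characteristic setting; and for (iii), Bass's theorem on propagation of Bass numbers along saturated chains of primes.

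For (i), I would observe that $S$ is Cohen-Macaulay (being regular), so the grade of $I$ on $S$ equals $\height(I) = n - \dim(S/I) = n - d$. Thus $H^k_I(S) = 0$ for $k < n - d$, and setting $k = n - j$ with $j > d$ yields $\newLyu_{i, j}(R) = 0$ directly from the definition.

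For (ii), I would first bound the support: for $\mathfrak{p} \in \Supp H^{n-j}_I(S)$, nonvanishing of $H^{n-j}_{I S_\mathfrak{p}}(S_\mathfrak{p})$ forces $n - j \leq \dim S_\mathfrak{p} = \height \mathfrak{p}$ by Grothendieck's vanishing theorem, so $\dim(S/\mathfrak{p}) \leq j$ since $S$ is regular local. Hence $\dim_S H^{n-j}_I(S) \leq j$. Then, applying the injective-dimension bound for local cohomology of unramified regular local rings of mixed characteristic from \cite{LyuUMC, Nunez}, namely $\InjDim_S H^k_I(S) \leq \dim_S H^k_I(S) + 1$, I obtain $\InjDim_S H^{n-j}_I(S) \leq j + 1$, and therefore $\Ext^i_S(K, H^{n-j}_I(S)) = 0$ for $i > j + 1$.

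For (iii), I would exhibit a high-dimensional associated prime of $H^{n-d}_I(S)$. Since $\dim(S/I) = d$, choose a minimal prime $\mathfrak{p}$ of $I$ with $\dim(S/\mathfrak{p}) = d$, so $\height \mathfrak{p} = n - d$. Because $\mathfrak{p}$ is minimal over $I$, the ideals $IS_\mathfrak{p}$ and $\mathfrak{p} S_\mathfrak{p}$ share the same radical, so
$$(H^{n-d}_I(S))_\mathfrak{p} = H^{n-d}_{\mathfrak{p} S_\mathfrak{p}}(S_\mathfrak{p}) = E_{S_\mathfrak{p}}(k(\mathfrak{p})),$$
the injective hull of the residue field over the regular local ring $S_\mathfrak{p}$ of dimension $n - d$. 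Since the only associated prime of this injective hull is $\mathfrak{p} S_\mathfrak{p}$, it follows that $\mathfrak{p} \in \Ass_S H^{n-d}_I(S)$, i.e., $\Ext^0_{S_\mathfrak{p}}(k(\mathfrak{p}), (H^{n-d}_I(S))_\mathfrak{p}) \neq 0$. Invoking Bass's theorem---for any $S$-module $N$ and primes $\mathfrak{p}_0 \subsetneq \mathfrak{p}_1$ with no prime strictly between them, $\Ext^i_{S_{\mathfrak{p}_0}}(k(\mathfrak{p}_0), N_{\mathfrak{p}_0}) \neq 0$ implies $\Ext^{i+1}_{S_{\mathfrak{p}_1}}(k(\mathfrak{p}_1), N_{\mathfrak{p}_1}) \neq 0$---and choosing a saturated chain of primes from $\mathfrak{p}$ to $\mathfrak{m}$ of length $d$ (available since $S$ is regular local of dimension $n$), I propagate to conclude $\dim_K \Ext^d_S(K, H^{n-d}_I(S)) = \newLyu_{d, d}(R) \neq 0$.

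The main nontrivial input is the injective-dimension bound of (ii), which is specific to the unramified mixed characteristic setting; the ``$+1$'' appearing there is precisely what forces the weaker bound $i > j + 1$ compared to the equal characteristic analog $i > j$. The other steps are a clean synthesis of grade-theoretic vanishing and Bass's theorem on Bass numbers.
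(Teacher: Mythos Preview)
Your arguments for (i) and (ii) are correct and essentially match the paper's approach; the paper cites \cite{Zhou} rather than \cite{LyuUMC,Nunez} for the injective-dimension bound in (ii), but the content is the same.

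Your proof of (iii), however, has a genuine gap. The version of Bass's theorem you invoke---that for \emph{any} $S$-module $N$, nonvanishing of $\mu^i(\mathfrak{p}_0, N)$ forces nonvanishing of $\mu^{i+1}(\mathfrak{p}_1, N)$ for adjacent primes $\mathfrak{p}_0 \subsetneq \mathfrak{p}_1$---is false without a finiteness hypothesis on $N$. A concrete counterexample: take $N = E_S(S/\mathfrak{p})$ for any non-maximal prime $\mathfrak{p}$; then $\mu^0(\mathfrak{p}, N) = 1$, yet $N$ is injective, so $\mu^j(\mathfrak{q}, N) = 0$ for every $j \geq 1$ and every prime $\mathfrak{q}$. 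The classical statement of Bass's theorem requires $N$ to be finitely generated, and $H^{n-d}_I(S)$ typically is not. Your propagation from $\mu^0(\mathfrak{p}, H^{n-d}_I(S)) \neq 0$ up a saturated chain to $\mu^d(\eta, H^{n-d}_I(S)) \neq 0$ is therefore unjustified as written.

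The paper takes a different route for (iii). It first observes, via the same spectral-sequence argument as in \cite[Property 4.4(iii)]{LyuDmodules}, that $H^d_\eta H^{n-d}_I(S) \neq 0$. It then argues by contradiction: if $\Ext^d_S(K, H^{n-d}_I(S)) = 0$, a d\'evissage on length gives $\Ext^d_S(M, H^{n-d}_I(S)) = 0$ for every finite-length $S$-module $M$, whence $H^d_\eta H^{n-d}_I(S) = \varinjlim_\ell \Ext^d_S(S/\eta^\ell, H^{n-d}_I(S)) = 0$, a contradiction. This argument makes no appeal to propagation of Bass numbers for non-finitely-generated modules.
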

\begin{proof}
Let $\widehat{R}$ be the completion of $R$, so that $\widehat{R}$ admits a surjective ring homomorphism $\pi : 
S\surj \widehat{R}$, where $(S, \eta)$ is an unramified regular local ring of mixed characteristic and of dimension $n$.
Let $I=\Ker(\pi)$.

Property (i) holds since $H^{n-j}_I(S)=0$ for $j> \dim(S/I) = \dim R = d$, and (ii) holds since $\InjDim_S H^{n-j}_I(S) \leq \dim_S H^{n-j}_I(S) +1\leq i+1$ \cite{Zhou}. 

To prove (iii), first note that by an analogous argument to the proof of \cite[Property 4.4(iii)]{LyuDmodules}, $ H^d_\eta H^{n-d}_I(S) \neq 0.$   We will prove that $\newLyu_{d,d}(R)\neq 0$ by contradicting this fact. Suppose that $\newLyu_{d,d}(R)=\Ext^d_S(K,H^{n-d}_I(S))=0$. 

We claim that $\Ext^d_S(M,H^{n-d}_I(S))=0$ for every finite-length $S$-module $M$. We will prove this by induction on 
$h=\Length_S(M)$. If $h=1$, then $M=K$, and the statement holds by assumption. Suppose that the statement is true for all $N$ with $\Length_S N < h+1$, and take $M$ with $\Length_S M =h+1$. Then there exists a short exact sequence $0\to K\to M\to M'\to 0$, 
where $M'$ is an $S$-module of length $h$.  The long exact sequence in Ext gives:
$$
\cdots\to\Ext^d_S(M',H^{n-d}_I(S))\to \Ext^d_S(M,H^{n-d}_I(S))\to \Ext^d_S(K,H^{n-d}_I(S))\to\cdots.
$$
Since $\Ext^d_S(K,H^{n-d}_I(S))=\Ext^d_S(M',H^{n-d}_I(S))=0$ by the inductive hypothesis, $\Ext^d_S(M,H^{n-d+1}_I(S))=0$, and the claim follows.

This claim implies that $\Ext^d_S(S/\eta^\ell,H^{n-d}_I(S))=0$ for all $\ell \geq 1$. 
Then $H^d_\eta H^{n-d}_I(S)=\lim\limits_{\overset{\longrightarrow}{\ell}} \Ext^d_S(S/\eta^\ell,H^{n-d}_I(S))=0$, the sought contradiction.
\end{proof}

\begin{Prop} \label{PropCI}
Let $(V,pV,K)$ be an complete DVR of unramified mixed characteristic $p>0$, and let $S=V[[x_1,\ldots, x_n]]$.
Let $f_1,\ldots, f_\ell\in S$ be a regular sequence. Then
$$\newLyu_{i,j} \left(S/(f_1,\ldots, f_\ell) \right)=1$$ for $i=j=n+1-\ell$, and vanishes otherwise.
\end{Prop}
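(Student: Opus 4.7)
The plan is to read the definition of $\newLyu_{i,j}$ off to a Bass number calculation and to extract those Bass numbers through two applications of the Grothendieck spectral sequence for composition of functors. Set $d = n+1-\ell = \Dim R$, let $\eta$ denote the maximal ideal of $S$, and set $I = (f_1,\ldots,f_\ell)$; since $\Dim S = n+1$, Definition \ref{LyuNumsGen} reads $\newLyu_{i,j}(R) = \dim_K \Ext^{i}_S(K, H^{n+1-j}_I(S))$. Because $f_1,\ldots, f_\ell$ is an $S$-regular sequence generating $I$ in the regular ring $S$, both $\Depth_I(S)$ and the arithmetic rank of $I$ equal $\ell$, so $H^q_I(S) = 0$ for every $q \neq \ell$. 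This alone forces $\newLyu_{i,j}(R) = 0$ whenever $j \neq d$, and reduces the problem to computing $\dim_K \Ext^i_S(K, H^\ell_I(S))$ for each $i$.

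For this remaining computation, I would first pass to iterated local cohomology via the Grothendieck spectral sequence
$$E_2^{p,q} = H^p_\eta(H^q_I(S)) \Longrightarrow H^{p+q}_\eta(S),$$
which comes from the identity $\Gamma_\eta = \Gamma_\eta \circ \Gamma_I$ (valid because $I \subseteq \eta$), together with the standard fact that $\Gamma_I$ preserves $S$-injectives. The previous paragraph concentrates the $E_2$-page on the single row $q = \ell$, so the sequence degenerates into isomorphisms $H^p_\eta(H^\ell_I(S)) \cong H^{p+\ell}_\eta(S)$. Since $S$ is regular of dimension $n+1$, the right-hand side vanishes except when $p = d$, in which case it equals the injective hull $E_S(K)$ of the residue field.

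Next, I would feed this into a second Grothendieck spectral sequence,
$$E_2^{p,q} = \Ext^p_S(K, H^q_\eta(M)) \Longrightarrow \Ext^{p+q}_S(K, M),$$
arising from the factorization $\Hom_S(K,-) = \Hom_S(K,-) \circ \Gamma_\eta$ (any homomorphism from $K$ lands in the $\eta$-torsion of its target), applied to $M = H^\ell_I(S)$. By the previous step, $H^q_\eta(M)$ equals $E_S(K)$ for $q = d$ and is zero otherwise, and the injectivity of $E_S(K)$ gives $\Ext^p_S(K, E_S(K)) = K$ for $p = 0$ and $0$ for $p > 0$. The $E_2$-page then has the single nonzero entry $E_2^{0,d} = K$, and the sequence collapses to $\Ext^i_S(K, H^\ell_I(S)) = K$ for $i = d$ and $0$ otherwise. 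Taking $K$-dimensions gives the proposition.

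I do not anticipate a genuine obstacle: the only things to verify are the acyclicity hypotheses needed for the two spectral sequences, and these hold because both $\Gamma_I$ and $\Gamma_\eta$ preserve injective $S$-modules, exactly as in the equal-characteristic setting. The argument uses no feature special to equal characteristic, which is what makes the complete intersection computation go through uniformly in the mixed characteristic case.
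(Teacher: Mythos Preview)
Your proof is correct and takes a somewhat different route from the paper's. The paper proceeds by induction on $\ell$: the base case uses the short exact sequence $0 \to S \to S_{f_1} \to H^1_{f_1 S}(S) \to 0$ and the vanishing of $\Ext^i_S(K, S_{f_1})$ to identify $\Ext^i_S(K, H^1_{f_1 S}(S))$ with $\Ext^{i+1}_S(K, S)$; the inductive step uses the analogous sequence $0 \to H^{\ell-1}_J(S) \to H^{\ell-1}_J(S)_{f_\ell} \to H^\ell_I(S) \to 0$ (with $J = (f_1,\ldots,f_{\ell-1})$) to shift the Ext degree by one and reduce to $\ell-1$. You instead collapse this induction into two degenerate Grothendieck spectral sequences: the first identifies $H^p_\eta H^\ell_I(S)$ all at once with $H^{p+\ell}_\eta(S)$, and the second reads off the Bass numbers from the fact that $H^\bullet_\eta H^\ell_I(S)$ is a single copy of $E_S(K)$ in degree $d$. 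The paper's argument is more elementary, using only long exact sequences, while yours is more conceptual and avoids the step-by-step bookkeeping; in substance, however, both are unwinding the same degeneration, and the vanishing of $\Ext^i_S(K, (-)_f)$ in the paper's proof plays exactly the role of the acyclicity hypotheses you invoke for the spectral sequences.
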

\begin{proof}

Our proof will be by induction on $\ell$. If $\ell=1$, 
we have the short exact sequence
$$
0\to S\to S_{f_1} \to H^1_{f_1S}( S)\to 0.
$$
Then $\Ext^i_S(K, S)\cong \Ext^{i+1}_S(K, H^1_{f_1S}( S))$ for every $i\geq 0$ because $\Ext^i_S(K, S_f)=0.$

Suppose that the formula holds for $\ell-1$ and we will prove it for $\ell$.   From the exact sequence
$$
0\to H^{n-\ell-1}_{(f_1,\ldots, f_{\ell-1})S} (S)\to H^{n-\ell-1}_{(f_1,\ldots, f_{\ell-1})S}(S_{f_{\ell}})
\to H^{n-\ell}_{(f_1,\ldots, f_\ell)S}(S)\to  0,
$$
we obtain that $\Ext^i_S(K, H^{n-\ell}_{(f_1,\ldots, f_\ell)S}(S))=\Ext^{i+1}_S(K, H^{n-\ell-1}_{(f_1,\ldots, f_{\ell-1})S} (S))$ 
for every $i\geq 0$ because $\Ext^i_S(K, H^{n-\ell}_{(f_1,\ldots, f_\ell)S}(S_{f_{\ell+1}}))=0$.
\end{proof}

\section{Existence of the highest Lyubeznik Number in Mixed Characteristic}\label{HighestLyuNum} 

\begin{Lemma}\label{LemmaHom}
Let $(V,pV,K)$ be a complete DVR of unramified mixed characteristic $p>0$, and let $S=V[[x_1\ldots, x_n]]$.
Then $$\End_{D(S,V)}\left(E_S(K)\right)=V.$$
\end{Lemma}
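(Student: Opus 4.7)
The plan is to combine Matlis duality with the explicit description of $D(S,V)$ recalled in Section~\ref{Preliminaries}. The overall outline is the chain
\[
V\hookrightarrow \End_{D(S,V)}(E_S(K))\hookrightarrow \End_S(E_S(K))\cong S,
\]
and the content of the lemma is that the image of the middle term inside $S$ is exactly $V$.

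Since $S = V[[x_1,\ldots,x_n]]$ is a complete Noetherian local ring (power series over the complete DVR $V$), Matlis duality identifies $\End_S(E_S(K))$ with $S$ via $s\mapsto (s\cdot)$, and in particular gives $\Ann_S(E_S(K))=0$. Because $S\subseteq D(S,V)$ as the order-zero operators, every $D(S,V)$-linear endomorphism is already $S$-linear, hence equals $(s\cdot)$ for a unique $s\in S$. The problem therefore reduces to identifying, inside $S$, the $s$ whose multiplication operator commutes with every element of $D(S,V) = S\langle \frac{1}{t!}\partial_{x_i}^t\rangle$.

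For this I will use the basic commutator identity $[\partial_{x_i},s]=\partial_{x_i}(s)$, valid as operators on any $D(S,V)$-module. It says that $(s\cdot)$ commutes with $\partial_{x_i}$ iff the operator $\partial_{x_i}(s)\cdot$ vanishes on $E_S(K)$, and the faithfulness from the previous step then forces $\partial_{x_i}(s)=0$ in $S$ for each $i=1,\ldots,n$. A short induction shows that $[\partial_{x_i},s]=0$ already implies $[\partial_{x_i}^t,s]=0$ and hence $[\frac{1}{t!}\partial_{x_i}^t,s]=0$, so commuting with the first-order derivatives alone is enough. Writing $s=\sum_\alpha v_\alpha x^\alpha$ with $v_\alpha\in V$, the condition $\partial_{x_i}(s)=0$ for all $i$ forces $\alpha_i v_\alpha = 0$ whenever $\alpha_i\geq 1$; since $V$ has characteristic zero, this yields $v_\alpha=0$ for $\alpha\neq 0$, and therefore $s\in V$. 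The reverse inclusion $V\subseteq \End_{D(S,V)}(E_S(K))$ is automatic since $D(S,V)$ consists by definition of $V$-linear operators, and injectivity of $V\to \End_{D(S,V)}(E_S(K))$ follows from faithfulness of $E_S(K)$ over $V\subseteq S$.

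The main obstacle is conceptual rather than technical: the baseline equality $\End_S(E_S(K))=S$ is too large, and one must carefully leverage the additional structure of $V$-linear differential operators to cut it down. The decisive input is that $V$ has characteristic zero, which converts the vanishing $\partial_{x_i}(s)=0$ into the term-by-term vanishing of the coefficients of $s$; this is the only place where the mixed-characteristic hypothesis is essentially used.
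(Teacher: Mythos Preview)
Your argument is correct and somewhat cleaner than the paper's. Both proofs begin identically: completeness of $S$ plus Matlis duality give $\End_S(E_S(K))=S$, so any $D(S,V)$-endomorphism of $E_S(K)$ is multiplication by some $r\in S$, and one must show $r\in V$. From here the paper argues by contradiction: it isolates the lex-smallest nonconstant monomial $bx^\alpha$ appearing in $r$, applies the specific divided-power operator $\prod_i \frac{(-1)^{\alpha_i-1}}{\alpha_i!}\partial_{x_i}^{\alpha_i}$ to the explicit elements $\frac{1}{p^j x_1\cdots x_n}\in E_S(K)$, and compares the results of applying $r$ before versus after; this forces $b\in p^jV$ for every $j$, hence $b=0$. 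You replace this explicit computation by the commutator identity $[\partial_{x_i},r]=\partial_{x_i}(r)$ in the ring $D(S,V)$, which, combined with the faithfulness of $E_S(K)$ as an $S$-module, immediately gives $\partial_{x_i}(r)=0$ for each $i$; since $V$ is a domain of characteristic zero this kills every nonconstant coefficient of $r$. Your route is more conceptual and needs only the first-order operators, while the paper's is more concrete and exhibits precisely which elements of $E_S(K)$ witness the failure. One minor remark: your aside that $[\partial_{x_i},s]=0$ implies $[\tfrac{1}{t!}\partial_{x_i}^t,s]=0$ is true but unnecessary here, since you only use the trivial implication ``commutes with all of $D(S,V)$ $\Rightarrow$ commutes with each $\partial_{x_i}$''.
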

\begin{proof}
Let $\phi\in\End_{D(S,V)}\left(E_S(K)\right) \subseteq \End_{S}\left(E_S(K)\right) =S$;
 $\phi$ must correspond to multiplication by some $r\in S$. Thus, $\partial (rw)=r\partial (w)$ for every $w\in E_S(K)$ and 
$\partial\in D(S,V)$.
We will prove that $r\in V$ by contradiction. If $r\not\in V$, we may assume there exists $\alpha=(\alpha_1,\ldots,\alpha_n)\in \NN^n\setminus \{(0,\ldots,0)\}$  such that 
$$r=a+bx^{\alpha} + \sum_{\beta\in\NN, \beta\geq_{{\tiny \LEX}} \alpha} c_\beta x^\beta,$$ 
where $a,b,c_\beta \in V$ and $b\neq 0$.
Then for every $j\in \NN$,
$$
r\frac{(-1)^{\alpha_1-1}}{\alpha_1!}\frac{\partial^{\alpha_1}}{\partial x_1 ^{\alpha_1}}\cdots \frac{(-1)^{\alpha_n-1}}{\alpha_n!}\frac{\partial^{\alpha_n}}
{\partial x_n ^{\alpha_n}} \frac{1}{p^j x_1\cdots x_n}=
 \frac{r}{p^j x^{\alpha_1+1}_1\cdots x^{\alpha_n+1}_n}
$$
$$
=\frac{a}{p^j x^{\alpha_1+1}_1\cdots x^{\alpha_n+1}_n}  +\frac{b}{p^j x_1\cdots x_n}
$$
On the other hand, for every $j \in \NN$,
\begin{align*}
&\frac{(-1)^{\alpha_1-1}}{\alpha_1!}\frac{\partial^{\alpha_1}}{\partial x_1^{\alpha_1}}\cdots \frac{(-1)^{\alpha_n-1}}{\alpha_n!}\frac{\partial^{\alpha_n}}{\partial x_n^{\alpha_n}} \frac{r}{p^j x_1\cdots x_n}\\
=&
\frac{(-1)^{\alpha_1-1}}{\alpha_1!}\frac{\partial^{\alpha_1}}{\partial x_1^{\alpha_1}}\cdots \frac{(-1)^{\alpha_n-1}}{\alpha_n!}\frac{\partial^{\alpha_n}}{\partial x_n^{\alpha_n}}
\frac{a}{p^j x_1\cdots x_n}\\
=&
\frac{a}{p^j x^{\alpha_1+1}_1\cdots x^{\alpha_n+1}_n}
\end{align*}
Then
$$
\frac{a}{p^j x^{\alpha_1+1}_1\cdots x^{\alpha_n}_n}  +\frac{b}{p^j x_1\cdots x_n}=
\frac{a}{p^j x^{\alpha_1+1}_1\cdots x^{\alpha_n}_n},
$$
so $b\in p^j V$ for every $j\in \NN$. This means that $b=0$, a contradiction. Thus, $r\in V$.
Since every map given by multiplication by an element in $V$ is already a map of $D(S,V)$-modules,
we are done. 
\end{proof}

\begin{Prop}\label{PropAnn}
Let $(V,pV,K)$ be a complete DVR of unramified mixed characteristic $p>0$, and let $S=V[[x_1\ldots, x_n]]$.
Let $N\subsetneq E_S(K)$ be a proper $D(S,V)$-submodule. Then $N=\Ann_{E_S(K)} p^\ell S$ for some $\ell\in\NN$.
\end{Prop}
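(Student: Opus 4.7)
The plan is to use the natural ascending filtration of $E_S(K)$ by the $D(S,V)$-submodules $F_\ell:=\Ann_{E_S(K)} p^\ell S$; these are $D(S,V)$-stable because $p\in V$ is central in $D(S,V)$, so multiplication by $p^\ell$ commutes with every operator of $D(S,V)$. Their union is all of $E_S(K)$, since each element of $E_S(K)=H^{n+1}_{(p,x_1,\ldots,x_n)}(S)$ is annihilated by a power of the maximal ideal, and hence by a power of $p$. The goal is to show these $F_\ell$ exhaust the proper $D(S,V)$-submodules.

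Two structural facts drive the argument. First, multiplication by $p\colon E_S(K)\to E_S(K)$ is a $D(S,V)$-linear surjection with kernel $F_1$; surjectivity is transparent from the Cech realization of $E_S(K)$ as a quotient of $S_{p x_1\cdots x_n}$, and it implies that for each $\ell\geq 1$, multiplication by $p^{\ell-1}$ induces a $D(S,V)$-isomorphism $F_\ell/F_{\ell-1}\cong F_1$. Second, $F_1$ is a simple $D(S,V)$-module: one identifies $F_1=\Hom_S(S/pS,E_S(K))\cong E_{S/pS}(K)=H^n_{(x_1,\ldots,x_n)}(K[[x_1,\ldots,x_n]])$, and the $D(S,V)$-action factors through $D(K[[x]],K)$. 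Any nonzero $D$-submodule of $F_1$ must contain the one-dimensional $K[[x]]$-socle generated by $(x_1\cdots x_n)^{-1}$; applying the hyperderivatives $\frac{1}{t!}\deriv{x_i}{t}$ to this generator produces (up to sign) all monomials $x_1^{-a_1}\cdots x_n^{-a_n}$ with $a_i\geq 1$, which span $F_1$ over $K$.

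With this structure in place, let $N$ be a proper $D(S,V)$-submodule. If $N=0$ take $\ell=0$. Otherwise, $N$ is a nonzero Artinian $S$-submodule of $E_S(K)$, so it contains the $S$-socle of $E_S(K)$, which sits inside $F_1$; simplicity of $F_1$ promotes this to $F_1\subseteq N$. Define $\ell:=\sup\{k\in\NN : F_k\subseteq N\}$. Since $\bigcup_k F_k=E_S(K)$ and $N\neq E_S(K)$, the supremum $\ell$ is finite, so $F_\ell\subseteq N$ while $F_{\ell+1}\not\subseteq N$. Then $(N\cap F_{\ell+1})/F_\ell$ is a proper $D(S,V)$-submodule of the simple module $F_{\ell+1}/F_\ell\cong F_1$, so it vanishes, yielding $N\cap F_{\ell+1}=F_\ell$. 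If some $u\in N$ had $p$-torsion order $m>\ell$, then $p^{m-\ell-1}u$ would belong to $N\cap F_{\ell+1}\setminus F_\ell$, a contradiction. Therefore $N=F_\ell$.

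The step I expect to be the main obstacle is proving the simplicity of $F_1$ as a $D(S,V)$-module. Unlike the characteristic-zero analogue, this depends essentially on the hyperderivatives $\frac{1}{t!}\deriv{x_i}{t}$ that belong to $D(K[[x]],K)$ in positive residue characteristic, which are what allow us to move between the basis elements of $H^n_{(x)}(K[[x]])$. Once this simplicity is secured, the remainder of the proof is a clean bookkeeping of how $N$ meets the filtration $\{F_\ell\}$, using only that each graded piece $F_\ell/F_{\ell-1}$ is a copy of the simple module $F_1$.
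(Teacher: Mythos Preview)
Your proof is correct and follows essentially the same approach as the paper's: both arguments hinge on the filtration $F_\ell=\Ann_{E_S(K)}p^\ell S$, the simplicity of $F_1\cong E_{S/pS}(K)$ as a $D(S,V)$-module, and the fact that each successive quotient $F_\ell/F_{\ell-1}$ is a copy of $F_1$. The only stylistic difference is that the paper proves inductively that any element of exact $p$-torsion order $\ell$ generates $F_\ell$ (working with the explicit generator $1/p^\ell x_1\cdots x_n$), whereas you phrase the same content as a clean filtration argument, showing $N\cap F_{\ell+1}=F_\ell$ forces $N=F_\ell$; these are two formulations of the same composition-series reasoning.
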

\begin{proof}
Let $v\in N$ be such that $v\in \Ann_{E_S(K)} p^\ell S$ but $v\not\in \Ann_{E_S(K)} p^{\ell-1}  S$.
We claim that $D(S,V)\cdot v=\Ann_{E_S(K)} p^{\ell}  S$ by induction on $\ell$.
If $\ell=1$, $\Ann_{E_S(K)} p  S=E_{S/pS}(K)$, a simple $D(S,K)$-module, and the claim holds.
Now, we suppose the claim true for $\ell-1$.
Since $\Ann_{E_S(K)} p^{\ell}  S /\Ann_{E_S(K)} p^{\ell-1} S=E_{S/pS} (K)$,  there exists an operator $\partial\in D(S,V)$ 
such that 
$$ \partial v= 1/p^\ell x_1\cdots x_n+ w$$ 
for an element $w\in \Ann_{E_S(K)} p^{\ell-1}  S$. 
Then  $p \partial v\in \Ann_{E_S(K)} p^{\ell-1}  S \setminus \Ann_{E_S(K)} p^{\ell-2}  S$. Thus, there exists 
an operator $\delta$ such that $p\delta\partial v=w$ by the induction hypothesis, so $ 1/p^\ell x_1\cdots x_n=(\partial -p\delta \partial)v$.
Therefore,   $\Ann_{E_S(K)} p^{\ell}  =D(S,V)\cdot 1/p^\ell x_1\cdots x_n \subseteq D(S,V)\cdot v\subseteq \Ann_{E_S(K)} p^{\ell}$, proving our claim.

Since $N\neq E_S(K)$, $\ell=\Inf\{j\in\NN\mid 1/p^j x_1\cdots x_n \in N\}$ is a natural number.
Hence, $N=D(S,V)\cdot 1/p^\ell x_1\cdots x_n=\Ann_{E_S(K)} p^\ell S$.
\end{proof}

\begin{Lemma}\label{LemmaSub}
Let $(V,pV,K)$ be a complete DVR of unramified mixed characteristic $p>0$, and let $S=V[[x_1\ldots, x_n]]$.
Let $M\subseteq \bigoplus \limits^{h}_{i=1} E_S(K)$ be a $D(S,V)$-submodule.
Then
$M\FDer{\cdot p} M$ is surjective if and only if $M$ is an injective $S$-module.
\end{Lemma}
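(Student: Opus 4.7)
The plan is to dispatch the easy direction first, and then tackle the forward direction by showing that $M$ coincides with its injective hull inside $\bigoplus_{i=1}^{h} E_S(K)$.

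For $(\Leftarrow)$, suppose $M$ is injective. Being a submodule of the Artinian module $\bigoplus_{i=1}^{h} E_S(K)$, $M$ is itself Artinian, and every Artinian injective $S$-module is a direct sum of copies of $E_S(K)$; hence $M \cong E_S(K)^{h'}$ for some $h' \leq h$. Because $S$ is a domain, $E_S(K)$ is a divisible $S$-module, so multiplication by $p$ is surjective on $M$.

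For $(\Rightarrow)$, suppose $pM = M$, and let $E(M) \subseteq \bigoplus_{i=1}^{h} E_S(K)$ denote the injective hull of $M$. Then $E(M) \cong E_S(K)^{h'}$ for $h' = \dim_K \Sc(M)$, and $E(M)[p] \cong E_{S/pS}(K)^{h'}$ (using $E_S(K)[p] = \Hom_S(S/pS, E_S(K)) = E_{S/pS}(K)$). The crux is to establish $M[p] = E(M)[p]$. Note that $M[p]$ is a $D(S/pS, K)$-submodule of $E_{S/pS}(K)^{h}$. Since $E_{S/pS}(K)$ is a simple $D(S/pS, K)$-module (as invoked in the proof of Proposition \ref{PropAnn}), the ambient module $E_{S/pS}(K)^{h}$ is semisimple as a $D(S/pS, K)$-module, and consequently $M[p] \cong E_{S/pS}(K)^{h''}$ for some $h''$. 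Matching socle dimensions yields $h'' = \dim_K \Sc(M[p]) = \dim_K \Sc(M) = h'$, so $M[p]$ is injective as an $S/pS$-module. Moreover, $M[p] \supseteq \Sc(M) = \Sc(E(M))$, which is the socle of $E(M)[p]$ and essential therein; thus the inclusion $M[p] \subseteq E(M)[p]$ is essential, and the injectivity of $M[p]$ over $S/pS$ forces $M[p] = E(M)[p]$.

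To conclude, suppose for contradiction $M \neq E(M)$. Then the Artinian module $E(M)/M$ is nonzero and admits a nonzero socle element, which we lift to $x \in E(M) \setminus M$ with $\mathfrak{m} x \subseteq M$. In particular $px \in M$, so $pM = M$ furnishes some $m \in M$ with $pm = px$; setting $y = x - m$ gives $py = 0$, whence $y \in E(M)[p] = M[p] \subseteq M$, and therefore $x = y + m \in M$, a contradiction. Hence $M = E(M)$ is injective. The principal obstacle is the identification $M[p] = E(M)[p]$, which requires combining the $D$-module simplicity of $E_{S/pS}(K)$ (from Proposition \ref{PropAnn}) with classical essential-extension theory; once this is established, the surjectivity of $\cdot p$ produces the required contradiction essentially automatically.
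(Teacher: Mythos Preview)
Your proof is correct and takes a genuinely different route from the paper's.

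The paper argues by taking a minimal injective resolution $0\to M\to E_1\xrightarrow{\phi} E_2\to 0$ (using the bound $\InjDim M\le 1$ from \cite{Zhou,Nunez}), applies Matlis duality to see that $E_1$ and $E_2$ are finite direct sums of $E_S(K)$, invokes Lemma~\ref{LemmaHom} to conclude that $\phi$ is given by a matrix with entries in $V$, and then puts this matrix in Smith normal form to exhibit $M$ explicitly as a direct sum of copies of $E_S(K)$ and of modules $\Ann_{E_S(K)}p^{v(a_i)}S$; the presence of the latter contradicts surjectivity of $\cdot\,p$. Your argument bypasses all of this machinery: you reduce modulo $p$, use the semisimplicity of $E_{S/pS}(K)^h$ as a $D(S/pS,K)$-module (since $E_{S/pS}(K)$ is simple) to force $M[p]\cong E_{S/pS}(K)^{h'}$, and then a short essential-extension argument gives $M[p]=E(M)[p]$; the final lifting step using $pM=M$ is entirely elementary. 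In effect, you replace Lemma~\ref{LemmaHom}, Matlis duality, and the structure theorem over a PID with the single input that $E_{S/pS}(K)$ is $D(S/pS,K)$-simple. The paper's approach, on the other hand, yields more: it produces an explicit structural decomposition of every $D(S,V)$-submodule of $E_S(K)^h$ (complementing Proposition~\ref{PropAnn}), information your argument does not recover. One small remark: the socle-matching step $h''=h'$ is correct but not actually needed---once you know $M[p]$ is injective over $S/pS$ and contains the essential socle of $E(M)[p]$, the equality $M[p]=E(M)[p]$ follows immediately.
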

\begin{proof}

Suppose that $M$ is an injective $S$-module. Since $E_S(K)\FDer{\cdot p} E_S(K)$ is surjective and $M=\bigoplus \limits_\ell E_S(K)$,  
$M\FDer{\cdot p} M$ is also surjective.

Now assume that $M\FDer{\cdot p} M$ is surjective. We will show that $M$ is injective by contradiction. Suppose that $M\neq E_S(M)$. 
As $M$ is a $D(S,V)$-module supported only at the maximal ideal,
$\InjDim(M)\leq 1$ (cf. \cite{Nunez,Zhou}).
Let $0\to M\to E_1\FDer{\phi} E_2\to 0$ be a minimal injective resolution of $M$; in particular, $E_2\neq 0$.
Let $\mu_i=\Ext^i_S(K,M)$. 
Now, $\mu_1$ is finite and less than or equal to $h$. Let $(-)^*= \Hom_S (-,E_S(K))$ be the Matlis duality functor. 
From the short exact sequence
$0\to E^*_2\FDer{\phi^*} S^{\mu_1}\to M^*\to 0$, we obtain that $E^*_2$ is a free module of finite rank less than or equal to $\mu_1$, so, 
$E_2=  \bigoplus \limits^{\mu_2}_{i=2} E_S(K)$. By Lemma \ref{LemmaHom},  $\phi$ is given by a 
$\mu_1\times \mu_2$-matrix with entries in $V$. Thus, $\phi^*: S^{\mu_2}\to S^{\mu_1}$ can be represented as a matrix by the 
transpose of a matrix that represents $\phi$.
We may consider $\phi^*$ as a map of free $V$-modules, $\phi^*: V^{\mu_2}\to V^{\mu_1}$. 
By the structure theorem for finitely-generated modules over a principal ideal domain, there are isomorphisms 
$\varphi_1: V^{\mu_1}\to V^{\mu_1}$ and $\varphi_2: V^{\mu_2}\to V^{\mu_2}$, such that 
$\varphi_1 \phi^* \varphi_2$ is a matrix whose entries are zero off the diagonal. That is, we have
$$
\xymatrix{
 V^{\mu_2}  \ar[rr]^{\phi^*}  & & V^{\mu_1} \ar[d]^{\varphi_1}\\
 V^{\mu_2}  \ar[rr]^{\varphi_1 \phi^* \varphi_2} \ar[u]^{\varphi_2} && V^{\mu_1} \\
}
$$
 
Let $a_1,\ldots a_{\mu_1}\in V$ be the elements on the diagonal of $\varphi_1 \phi^* \varphi_2$, and let 
$v:V\to \NN$ be the valuation.
Since $E_S(M)=E_1\to E_2$ is surjective,
none of $a_1,\ldots, a_{\mu_1}$ is zero.
Since $E_2\neq 0$ and the injective resolution 
$0\to M\to E_1\FDer{\phi} E_2\to 0$ is minimal,
 none of $a_1,\ldots, a_{\mu_1}$ are units in $V$.
 Then $a_1,\ldots, a_{\mu_1}\in pV\setminus \{0\}$.
We extend $\varphi_i$ as a isomorphism of $S$-modules, $\varphi_i: S^{\mu_i}\to S^{\mu_i}$.
Then $\varphi^*_i: E_i\to E_i$ is an isomorphism of $D(S,V)$-modules. We obtain the following commutative diagram

$$
\xymatrix{
0\ar[r] & M\ar[r]   & E_1  \ar[rr]^{\phi} & & E_2 \ar[r] \ar[d]^{\varphi^*_2} & 0\\
0\ar[r] & \Ker(\varphi^*_2 \phi \varphi^*_1) \ar[r] \ar[u]^{\varphi^*_1}   & E_1  \ar[rr]^{\varphi^*_2 \phi \varphi^*_1}  \ar[u]^{\varphi^*_1}  && E_2  \ar[r] & 0\\
}
$$
Therefore, $M\cong \Ker(\varphi^*_2 \phi \varphi^*_1)=\bigoplus \limits^{\mu_1-\mu_2}_{i=1} E_{S}(K) + \bigoplus \limits^{\mu_2}_{i=1}\Ann_{E_S(K)} p^{v(a_i)}S$
which is a contradiction because 
{\small
$$\bigoplus^{\mu_1-\mu_2}_{i=1} E_{S}(K) + \bigoplus^{\mu_2}_{i=1}\Ann_{E_S(K)} p^{v(a_i)}S\FDer{\cdot p} \bigoplus^{\mu_1-\mu_2}_{i=1} E_{S}(K) + \bigoplus^{\mu_2}_{i=1}\Ann_{E_S(K)} p^{v(a_i)}S$$} is not surjective.
\end{proof}

\begin{Lemma}\label{LemmaOne}
Let $(V,pV,K)$ be a complete DVR of unramified mixed characteristic $p>0$, and let $S=V[[x_1\ldots, x_n]]$.
Let $m$ denote the maximal ideal of $S$, and let $I\subseteq S$ be an ideal such that $\dim(S/I)=1$. Then $H^0_m H^n_I (S) = 0$ and $H^1_m H^n_I (S) \cong E_R(K)$.
\end{Lemma}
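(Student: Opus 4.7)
The plan is to apply the Grothendieck composite-functor spectral sequence
\[
E_2^{p,q} = H^p_m\bigl(H^q_I(S)\bigr) \Longrightarrow H^{p+q}_m(S),
\]
which is available since $I \subseteq m$, and to show that it collapses onto the single row $q = n$. Both desired identities will then fall out of the corresponding edge isomorphism with the well-known local cohomology of the regular ring $S$.

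First I would pin down which $H^q_I(S)$ are nonzero. Because $\dim S = n+1$ and $\dim(S/I)=1$, we have $\height(I) = n$; since $S$ is Cohen--Macaulay (indeed regular), $\operatorname{grade}(I) = \height(I) = n$, so $H^q_I(S) = 0$ for every $q < n$. The only remaining candidates are $q = n$ and $q = n+1$. For $q = n+1$ I would appeal to the Hartshorne--Lichtenbaum vanishing theorem: $S$ is a complete local Noetherian domain of dimension $n+1$ whose unique associated prime is $(0)$, and $\dim S/(I+(0)) = \dim S/I = 1 \geq 1$, so $H^{n+1}_I(S) = 0$. Thus only the row $q = n$ of $E_2$ survives, the spectral sequence degenerates at $E_2$, and one obtains isomorphisms
\[
H^p_m\bigl(H^n_I(S)\bigr) \;\cong\; H^{p+n}_m(S)\qquad\text{for every } p \geq 0.
\]
Since $S$ is regular local of dimension $n+1$, one has $H^k_m(S) = 0$ for $k \neq n+1$ and $H^{n+1}_m(S) \cong E_S(K)$. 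Specializing to $p = 0$ yields $H^0_m H^n_I(S) \cong H^n_m(S) = 0$, and specializing to $p = 1$ yields $H^1_m H^n_I(S) \cong H^{n+1}_m(S) \cong E_S(K)$, which is exactly what is claimed.

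The main obstacle is the vanishing $H^{n+1}_I(S) = 0$, which I have handled via Hartshorne--Lichtenbaum. If one preferred to stay entirely inside the framework developed earlier in the paper, the spectral sequence would still give the short exact sequence
\[
0 \to H^1_m\bigl(H^n_I(S)\bigr) \to E_S(K) \to H^{n+1}_I(S) \to 0,
\]
and one could then argue that $H^{n+1}_I(S)$ is a $D(S,V)$-module supported only at $m$ (since for any minimal prime $P$ of $I$ with $\dim S/P = 1$, localization gives $H^{n+1}_{IS_P}(S_P) = 0$ by dimension), and combine Proposition \ref{PropAnn} with a $p$-divisibility analysis via Lemma \ref{LemmaSub} to force the quotient $E_S(K)/H^1_m(H^n_I(S))$ to vanish. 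Either route completes the proof.
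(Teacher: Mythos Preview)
Your main argument via the Grothendieck spectral sequence is correct. The key inputs---$H^q_I(S)=0$ for $q<n$ from $\operatorname{grade}(I)=\height(I)=n$ in the Cohen--Macaulay ring $S$, and $H^{n+1}_I(S)=0$ from Hartshorne--Lichtenbaum---are exactly what is needed to collapse the spectral sequence to the single row $q=n$, after which the identifications with $H^{n}_m(S)=0$ and $H^{n+1}_m(S)=E_S(K)$ are immediate.

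The paper argues differently: it chooses $f\in m$ avoiding all minimal primes of $I$, so that $\sqrt{I+fS}=m$, and uses the long exact sequence
\[
\cdots \to H^{n}_{I+fS}(S)\to H^{n}_I(S)\to H^{n}_I(S_f)\to H^{n+1}_{I+fS}(S)\to H^{n+1}_I(S)\to\cdots
\]
to extract the short exact sequence $0\to H^{n}_I(S)\to H^{n}_I(S_f)\to E_S(K)\to 0$, and then applies $H^{\bullet}_m$ to it, exploiting that $H^{j}_m\bigl(H^{n}_I(S_f)\bigr)=0$ for all $j$. This elementary route avoids spectral sequences but still needs $H^{n+1}_I(S)=0$ (i.e.\ Hartshorne--Lichtenbaum) to truncate the long exact sequence on the right, just as your argument does. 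Your approach is more conceptual and global; the paper's is more hands-on and, importantly, sets up the exact-sequence template reused verbatim in the subsequent lemmas on the pure-dimension-two case. One small remark: your ``alternative'' paragraph invokes Proposition~\ref{PropAnn} and Lemma~\ref{LemmaSub}, but in the paper's logical order these appear \emph{before} the present lemma, so that sketch could in principle be carried out, though it is considerably more work than simply citing Hartshorne--Lichtenbaum.
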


\begin{proof}
Let $f\in m$ be an element not in any minimal prime of $I$, so $\sqrt{I+fS}=m$. We have the following short exact sequence
$$
0\to H^{n}_I(S)\to H^{n}_I(S_f)\to H^{n+1}_{I + fS}(S) \cong E_S(K)\to 0.
$$
Since $f \in m$ and $H^{n}_I(S_f) \cong H^{n}_I(S)_f$, $H^0_m H^{n}_I(S_f) = 0$, which implies that $H^0_m H^n_I (S)= 0$.
Moreover, $H^1_m H^n_I (S)=E_S(K)$.
\end{proof}

\begin{Lemma}\label{LemmaDimPureTwoInj}
Let $(V,pV,K)$ be a complete DVR of unramified mixed characteristic $p>0$, and let $S=V[[x_1\ldots, x_n]]$.
Let $I\subseteq S$ be an ideal of pure dimension $2$. Then $H^{n}_I(S)$ is an injective $S$-module supported only at the maximal ideal.
\end{Lemma}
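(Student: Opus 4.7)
The plan is to verify the two hypotheses required by Lemma~\ref{LemmaSub}: first, that $H^n_I(S)$ is supported only at the maximal ideal $m$ of $S$, so that it sits inside $\bigoplus_{i=1}^{h} E_S(K)$ for some finite $h$; second, that multiplication by $p$ is surjective on $H^n_I(S)$. Once these are in place, Lemma~\ref{LemmaSub} immediately yields the claimed $S$-injectivity.

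For the support claim, I would localize at an arbitrary non-maximal prime $P$ of $S$; only the case $I\subseteq P$ requires attention. Since $\dim S = n+1$ and $I$ has pure dimension $2$, every minimal prime of $I$ has height $n-1$, so $\height(P)\in\{n-1,n\}$. If $\height(P)=n-1$, then $P$ is itself a minimal prime of $I$, and Grothendieck vanishing forces $H^n_I(S)_P = 0$ since $\dim S_P = n-1 < n$. If $\height(P)=n$, then $P$ strictly contains some minimal prime of $I$, and $\dim(S_P/IS_P) = 1$; Hartshorne--Lichtenbaum vanishing applied to the complete regular local domain $\widehat{S_P}$ of dimension $n$ then gives $H^n_{IS_P}(\widehat{S_P})=0$, and faithful flatness of $S_P\to\widehat{S_P}$ transfers this back to $H^n_I(S)_P = 0$.

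For surjectivity of $\cdot p$, I would apply $H^{\bullet}_I(-)$ to the short exact sequence $0\to S\FDer{\cdot p}S\to S/pS\to 0$. Since $S$ is a complete regular local domain of dimension $n+1$ and $\dim(S/I)=2\geq 1$, Hartshorne--Lichtenbaum gives $H^{n+1}_I(S)=0$. The ring $S/pS \cong K[[x_1,\ldots,x_n]]$ is likewise a complete regular local domain of dimension $n$, and $\dim((S/pS)/I(S/pS)) = \dim(S/(I+pS)) \geq \dim(S/I) - 1 = 1$, so a second application of Hartshorne--Lichtenbaum yields $H^n_I(S/pS)=0$. The relevant segment $H^n_I(S)\FDer{\cdot p}H^n_I(S)\to H^n_I(S/pS)=0$ of the long exact sequence then delivers surjectivity.

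With the support condition established, $H^n_I(S)$ is $m$-torsion. The finiteness of Bass numbers in unramified mixed characteristic (\cite{LyuUMC,Nunez}) makes $h := \dim_K\Hom_S(K, H^n_I(S))$ finite, so the socle of $H^n_I(S)$ is $K^h$ and there is a $D(S,V)$-equivariant embedding $H^n_I(S)\hookrightarrow E_S(K)^h$ into its $S$-injective hull. Combining this with surjectivity of $\cdot p$ and invoking Lemma~\ref{LemmaSub} closes the argument. The most delicate point is arguably the height-$n$ case of the support claim, since Hartshorne--Lichtenbaum is directly available only on $\widehat{S_P}$ and must be pulled back to $S_P$ via faithful flatness.
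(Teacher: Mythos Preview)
Your proof is correct and follows essentially the same strategy as the paper: establish that $H^n_I(S)$ is supported only at $m$ via Hartshorne--Lichtenbaum vanishing at non-maximal primes, show that multiplication by $p$ is surjective using the long exact sequence from $0\to S\FDer{\cdot p}S\to S/pS\to 0$ together with $H^n_I(S/pS)=0$ (again by Hartshorne--Lichtenbaum, since $\sqrt{I+pS}\neq m$), and then invoke Lemma~\ref{LemmaSub} after noting that the finite socle dimension supplies the required embedding into a finite sum of copies of $E_S(K)$. Your treatment is in fact somewhat more careful than the paper's: you explicitly separate the cases $\height(P)=n-1$ (handled by Grothendieck vanishing) and $\height(P)=n$, and you spell out the dimension inequality $\dim(S/(I+pS))\geq \dim(S/I)-1=1$ that underlies the assertion $\sqrt{I+pS}\neq m$.
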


\begin{proof}
Let $R$ denote $S/pS$.
The short exact sequence $0\to S\FDer{\cdot p} S\to R \to 0$ induces the long exact sequence 
$\cdots \to H^{n}_{I} (S) \FDer{\cdot p} H^{n}_{I} (S) \to H^{n}_{I} (R)\to 0,$
where $H^{n}_{I} (R)=0$ by the Hartshorne-Lichtenbaum Vanishing Theorem, as $\sqrt{I+pS}\neq m$.
Thus, $H^{n}_{I} (S)\FDer{\cdot p} H^{n}_{I} (S)$ is surjective.
Now, 
$H^{n}_{IS_P} (S_P)=0$ for every prime ideal $P$ not containing $I$.  
If $I\subseteq P$ and $P\neq m,$ then $\dim(S/P)=1$ and
$H^{n}_{IR_P} (R_P)=0$
by the Hartshorne-Lichtenbaum Vanishing Theorem because $I$ has pure dimension $2$ and $\sqrt{IS_P}\neq PS_P$.
Therefore, $H^{n}_{I} (R)$ is a $D(S,V)$-module supported only at the maximal ideal.
Since $\dim_K \Ext^0_S(K,H^n_I(S))$ is finite, $H^n_I(S)$ is injective by Lemma \ref{LemmaSub}.
\end{proof}

\begin{Lemma}\label{LCVPureTwo}
Let $(V,pV,K)$ be a complete DVR of unramified mixed characteristic $p>0$, and let $S=V[[x_1\ldots, x_n]]$.
Let $m$ denote the maximal ideal of $S$, and let $I\subseteq S$ be an ideal of pure dimension two. Then $H^0_m H^{n-1}_I (S)=H^1_m H^{n-1}_I (S)=0$.  Moreover, $H^{n}_I(S) \cong E_S(K)^{\bigoplus \alpha}$ for some $\alpha \in \NN$, and $H^{2}_m H^{n-1}_I(S) \cong E_S(K)^{\bigoplus \alpha+1}$.  In particular, $H^2_m H^{n-1}_I (S) $ is an injective $S$-module.
\end{Lemma}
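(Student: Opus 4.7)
The plan is to exploit the Grothendieck composition spectral sequence $E_2^{p,q} = H^p_m(H^q_I(S)) \Longrightarrow H^{p+q}_m(S)$, which in this setting has such a restricted shape that all four assertions can be read off from it. Because $I$ has pure dimension $2$, its height is $n-1$, so $H^q_I(S) = 0$ for $q < n-1$. Since $S$ is a complete local domain of dimension $n+1$ with $\dim S/I = 2 \geq 1$, the Hartshorne--Lichtenbaum Vanishing Theorem gives $H^{n+1}_I(S) = 0$, so only the columns $q = n-1$ and $q = n$ can contribute. Lemma \ref{LemmaDimPureTwoInj} identifies $H^n_I(S)$ as an injective $S$-module supported only at $m$, hence $H^n_I(S) \cong E_S(K)^{\oplus \alpha}$ for some $\alpha \in \NN$ (this is the claim about $H^n_I(S)$), and $H^p_m(H^n_I(S)) = 0$ for $p>0$ by injectivity. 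Grothendieck vanishing applied to $H^{n-1}_I(S)$, whose support has dimension at most $2$, kills $E_2^{p,n-1}$ for $p > 2$.

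Next I would match the $E_2$-page to the abutment, using that $H^i_m(S) = 0$ for $i \neq n+1$ and $H^{n+1}_m(S) \cong E_S(K)$. In total degrees $n-1$ and $n$ the only potentially nonzero corners are $E_2^{0,n-1}$, $E_2^{1,n-1}$, and $E_2^{0,n}$; among these $E_2^{0,n-1}$ and $E_2^{1,n-1}$ admit no nonzero differentials (since all possible sources and targets lie in vanishing columns), while $E_2^{0,n}$ admits only the single differential $d_2 : E_2^{0,n} \to E_2^{2,n-1}$, and no higher $d_r$ can act nontrivially because of the two-column shape. Thus the vanishing of the abutment in degree $n-1$ forces $H^0_m H^{n-1}_I(S) = 0$, and the vanishing in degree $n$ forces both $H^1_m H^{n-1}_I(S) = 0$ and the injectivity of $d_2 : H^n_I(S) \to H^2_m H^{n-1}_I(S)$, which together give the first assertion of the lemma.

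Finally, in total degree $n+1$ the only contribution is $E_2^{2,n-1}$, whose $E_\infty$-term is $\Coker(d_2)$, and the match to $H^{n+1}_m(S) \cong E_S(K)$ identifies this cokernel with $E_S(K)$. Thus $d_2$ fits into a short exact sequence $0 \to H^n_I(S) \to H^2_m H^{n-1}_I(S) \to E_S(K) \to 0$, and since $H^n_I(S) \cong E_S(K)^{\oplus \alpha}$ is injective, the sequence splits to yield $H^2_m H^{n-1}_I(S) \cong E_S(K)^{\oplus(\alpha+1)}$ and in particular its injectivity. The main point that requires care is confirming that the higher differentials $d_r$ for $r \geq 3$ vanish on the relevant entries, but this is forced automatically by the restricted two-column layout of the $E_2$-page; beyond that, the argument is essentially a bookkeeping exercise, with Lemma \ref{LemmaDimPureTwoInj} and Hartshorne--Lichtenbaum carrying the geometric content.
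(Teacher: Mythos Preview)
Your argument is correct and is a genuinely different route from the paper's. The paper does not use the Grothendieck spectral sequence; instead it chooses an element $f\in m$ avoiding the minimal primes of $I$, uses the exact sequence
\[
0\to H^{n-1}_I(S)\to H^{n-1}_I(S_f)\to H^{n}_{I+fS}(S)\to H^{n}_I(S)\to 0
\]
(coming from Hartshorne--Lichtenbaum and Lemma~\ref{LemmaDimPureTwoInj}), splits it into two short exact sequences, and applies $H^\bullet_m$ to each. The vanishing of $H^j_m H^{n-1}_I(S_f)$ and the dimension-one computation of Lemma~\ref{LemmaOne} for $I+fS$ then yield the vanishing of $H^0_m H^{n-1}_I(S)$ and $H^1_m H^{n-1}_I(S)$ together with a short exact sequence $0\to H^n_I(S)\to H^2_m H^{n-1}_I(S)\to H^1_m H^n_{I+fS}(S)\cong E_S(K)\to 0$, exactly as in your final step.

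Your spectral-sequence approach is cleaner and more conceptual: once you observe that only the two columns $q=n-1,\,n$ survive (by $\height I=n-1$ and Hartshorne--Lichtenbaum) and that the $q=n$ column collapses to a single injective entry by Lemma~\ref{LemmaDimPureTwoInj}, the whole lemma falls out of degree-counting. The paper's approach, by contrast, has the advantage of building an inductive pattern (reduce dimension by adjoining a general $f$) that is reused in Proposition~\ref{PropInj}. One small point you leave implicit and should state explicitly: the finiteness of $\alpha$ requires the finiteness of Bass numbers of $H^n_I(S)$, which the paper invokes from \cite{LyuUMC,Nunez}; without that, injectivity and $m$-support alone only give a possibly infinite direct sum of copies of $E_S(K)$.
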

\begin{proof}
Let $f\in m$ be an element not in any minimal prime of $I$. Then $\sqrt{I+fS}\neq m$. Applying the Hartshorne-Lichtenbaum Vanishing Theorem, since $H^{n}_I(S)$ is supported at $m$ by Lemma \ref{LemmaDimPureTwoInj}, we obtain the exact sequence
$$
0\to H^{n-1}_I(S)\to H^{n-1}_I(S_f)\to H^{n}_{I+fS}(S)\to H^{n}_I(S)\to 0.
$$
Splitting the sequence into two short exact sequences, we obtain
\begin{align*}
&0\to H^{n-1}_I(S)\to H^{n-1}_I(S_f)\to M\to 0, \text{ and}\\
&0\to M \to H^{n}_{I+fS}(S)\to H^{n}_I(S)\to 0.
\end{align*}

\noindent These induce the following long exact sequences: 
\[ \xymatrix@R=.65cm{ \ar[r] 
0 & H^{0}_m H^{n-1}_I(S) \ar[r] & H^{0}_m H^{n-1}_I(S_f) \ar[r] & H^{0}_m(M) \connarrow & \\ 
& H^{1}_m H^{n-1}_I(S) \ar[r] & H^{1}_m H^{n-1}_I(S_f) \ar[r] & H^{1}_m(M)   \connarrow & \\ 
& H^{2}_m H^{n-1}_I(S) \ar[r] & H^{2}_m H^{n-1}_I(S_f) \ar[r] & H^{2}_m(M)   \ar[r] & 0, 
}\] and
\[ \xymatrix@R=.65cm{ \ar[r] 
0 & H^{0}_m( M)\ar[r] & H^{0}_m H^{n}_{I+fS}(S) \ar[r] & H^{0}_m H^{n}_I(S)  \connarrow & \\ 
& H^{1}_m( M)  \ar[r] & H^{1}_m H^{n}_{I+fS}(S) \ar[r] & H^{1}_m H^{n}_I(S)  \connarrow & \\ 
& H^{2}_m( M) \ar[r] & H^{2}_m H^{n}_{I+fS}(S) \ar[r] & H^{2}_m H^{n}_I(S) \ar[r] & 0.
}\]


Since all $H^{j}_m H^{n-1}_I(S_f) =0$, we know that
$H^{0}_m  H^{n-1}_I(S) =H^{2}_m( M)=0$. Since $\dim(S/(I+fS))=1,$
$H^{0}_m  H^{n}_{I+fS}(S)=H^{2}_m H^{n}_{I+fS}(S)=0$ by Lemma \ref{LemmaOne}, which implies both that $H^{0}_m(M)=H^{1}_m H^{n-1}_I(S)=0$ and that $\LC{1}{m}{M} \cong H^{2}_{m} H^{n-1}_{I}(S)$.
In addition, $H^{1}_m H^{n}_{I}(S)=H^{2}_m H^{n}_{I}(S)=0$
by Lemma \ref{LemmaDimPureTwoInj}.
Thus, we have a short exact sequence
$$
0\to H^{0}_m( H^{n}_I(S))\to
 H^{2}_m H^{n-1}_I(S) \to H^{1}_m H^{n}_{I+fS}(S) \to 0.
$$

By Lemma \ref{LemmaDimPureTwoInj}, $H^{n}_I(S)$ is an injective $S$-module supported only at $m$, and its Bass numbers are finite by \cite{LyuUMC, Nunez}, so $H^0_m H^n_I(S) = H^n_I(S) \cong E_R(K)^{\bigoplus \alpha}$ for some $\alpha \in \NN.$   Moreover, by Lemma \ref{LemmaOne},  $H^{1}_m H^{n}_{I+fS}(S) \cong E_S(K)$.

Thus, we have the short exact sequence \[0 \to E_S(K)^{\bigoplus \alpha} \to H^{2}_m H^{n-1}_I(S) \to E_S(K) \to 0,\] which splits, so that $H^{2}_m H^{n-1}_I(S) \cong E_S(K)^{\bigoplus \alpha+1}$.
\end{proof}

\begin{Cor}
Let $(V,pV,K)$ be a complete DVR of unramified mixed characteristic $p>0$, and let $S=V[[x_1\ldots, x_n]]$.
Let $I$ be an ideal of $S$ of pure dimension two. Then $H^j_Q H^i_I(S_Q)$ is injective for every prime ideal $Q$ of $S$.
\end{Cor}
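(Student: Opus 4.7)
The plan is a case analysis on the prime $Q$ of $S$, reducing each case either to the preceding lemmas or to a mild extension of them. Throughout, I use $\dim S=n+1$ and $\height I=n-1$ (forced by the pure-dimension-two hypothesis), so $H^i_I(S)$ can be nonzero only for $i\in\{n-1,n\}$.

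If $I\not\subseteq Q$, then $IS_Q=S_Q$ and $H^i_I(S_Q)=0$, so the double local cohomology vanishes. If $Q=m$, then $S_Q=S$. Lemma \ref{LemmaDimPureTwoInj} gives $H^n_I(S)\cong E_S(K)^{\oplus\alpha}$, so $H^j_m H^n_I(S)$ is this module itself for $j=0$ and zero otherwise. Lemma \ref{LCVPureTwo}, combined with Grothendieck vanishing (applicable because $\Supp H^{n-1}_I(S)\subseteq V(I)$ has dimension at most two), covers $i=n-1$: the module $H^j_m H^{n-1}_I(S)$ is zero for $j\in\{0,1\}$ and $j\geq 3$, and is isomorphic to $E_S(K)^{\oplus\alpha+1}$ for $j=2$. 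All are injective.

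Now suppose $I\subseteq Q\subsetneq m$. If $Q$ is a minimal prime of $I$ then $\height Q=n-1$ and $IS_Q$ is $QS_Q$-primary, so $H^i_I(S_Q)=H^i_Q(S_Q)$ is the injective hull $E_{S_Q}(\kappa(Q))$ for $i=n-1$ and zero otherwise; applying $H^j_Q$ produces $E_{S_Q}(\kappa(Q))$ for $j=0$ and zero for $j>0$, all injective. The remaining case is when $Q$ strictly contains some minimal prime of $I$ and $Q\neq m$; then $\dim S/Q=1$, so $\height Q=n$, $\dim S_Q=n$, and $\dim S_Q/IS_Q=1$. The plan is to show that the proof of Lemma \ref{LemmaOne} goes through verbatim with $S$ replaced by the regular local ring $S_Q$ and $m$ replaced by $QS_Q$. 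The three ingredients needed -- prime avoidance producing $f\in QS_Q$ avoiding every minimal prime of $IS_Q$ with $\sqrt{IS_Q+fS_Q}=QS_Q$; the Hartshorne-Lichtenbaum Vanishing Theorem applied to $S_Q$; and the identification $H^n_Q(S_Q)\cong E_{S_Q}(\kappa(Q))$ from $S_Q$ being Gorenstein -- all remain valid. The resulting short exact sequence yields $H^0_Q H^{n-1}_I(S_Q)=0$ and $H^1_Q H^{n-1}_I(S_Q)\cong E_{S_Q}(\kappa(Q))$, while Grothendieck vanishing on the one-dimensional support of $H^{n-1}_I(S_Q)$ handles $j\geq 2$.

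The main (and essentially only) obstacle is justifying the generalization of Lemma \ref{LemmaOne} from $V[[x_1,\ldots,x_n]]$ to the non-complete regular local ring $S_Q$. The original proof invokes completeness only to apply HLV, and HLV is applicable to $S_Q$ because the completion of a regular local ring is again regular, hence a domain; therefore $H^n_I(S_Q)=0$ as soon as $\dim S_Q/IS_Q\geq 1$. Once this extension is in hand, the rest is bookkeeping via the long exact sequence and the preceding lemmas.
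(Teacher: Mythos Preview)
Your argument is correct and is precisely the case analysis that the paper's one-line proof (``This follows from Lemmas~\ref{LemmaOne} and~\ref{LCVPureTwo}'') leaves to the reader, including your careful justification that the proof of Lemma~\ref{LemmaOne} carries over verbatim to the regular local ring $S_Q$. The only point left slightly implicit in your height-$n$ case is that $H^{n}_I(S_Q)=0$ there---immediate either from HLV (which you already invoke) or from Lemma~\ref{LemmaDimPureTwoInj}, since $H^n_I(S)$ is supported only at $m$---so only $i=n-1$ actually needs the adapted Lemma~\ref{LemmaOne}.
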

\begin{proof}
This follows from Lemmas \ref{LemmaOne} and \ref{LCVPureTwo}.
\end{proof}

\begin{Lemma}\label{LCVTwo}
Let $(V,pV,K)$ be a complete DVR of unramified mixed characteristic $p>0$, and let $S=V[[x_1\ldots, x_n]]$.
Let $I$ be an ideal of $S$ such that $\dim(S/I)=2$, and let $m$ denote its maximal ideal.
 Then $H^0_m H^{n-1}_I (S) =H^1_m H^{n-1}_I (S)=0$
and $H^2_m H^{n-1}_I (S) $ is an injective $S$-module.
\end{Lemma}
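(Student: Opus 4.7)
The plan is to closely imitate the proof of Lemma~\ref{LCVPureTwo}, making additional effort at the end to handle the fact that $I$ is not required to be of pure dimension. As in that proof, I choose $f\in m$ not contained in any minimal prime of $I$, so that $\dim(S/(I+fS))=1$. The Hartshorne--Lichtenbaum Vanishing Theorem then yields $H^{n+1}_{I+fS}(S)=H^{n+1}_I(S)=0$, and since $\height(I+fS)=n$, also $H^i_{I+fS}(S)=0$ for every $i<n$. Combined with the long exact sequence
\[ \cdots \to H^i_{I+fS}(S) \to H^i_I(S) \to H^i_I(S_f) \to H^{i+1}_{I+fS}(S) \to \cdots, \]
this collapses into the five-term exact sequence
\[ 0 \to H^{n-1}_I(S) \to H^{n-1}_I(S_f) \to H^n_{I+fS}(S) \to H^n_I(S) \to H^n_I(S_f) \to 0. \]

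Next, I apply $H^\bullet_m(-)$, using that $H^\bullet_m$ of any $S_f$-module vanishes since $f\in m$; splitting the above into three short exact sequences and threading their long exact sequences together immediately yields $H^0_m H^{n-1}_I(S)=0$. Combining Lemma~\ref{LemmaOne} applied to the dimension-one ideal $I+fS$ (which supplies $H^0_m H^n_{I+fS}(S)=0$ and $H^1_m H^n_{I+fS}(S)\cong E_S(K)$) with the vanishing $H^2_m H^n_I(S)=0$ (read off from the Grothendieck spectral sequence $H^p_m H^q_I(S)\Rightarrow H^{p+q}_m(S)$ in total degree $n+2$, where $E_2^{2,n}=H^2_m H^n_I(S)$ is the only possibly nonzero contribution and hence must vanish) then produces $H^1_m H^{n-1}_I(S)=0$ together with the four-term exact sequence
\[ 0 \to H^0_m H^n_I(S) \to H^2_m H^{n-1}_I(S) \to E_S(K) \to H^1_m H^n_I(S) \to 0. \]

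The main obstacle is to show that $H^2_m H^{n-1}_I(S)$ is injective. By the finiteness of Bass numbers in mixed characteristic \cite{LyuUMC,Nunez}, $H^2_m H^{n-1}_I(S)$ has finite socle and hence embeds $D(S,V)$-linearly into some $E_S(K)^{\oplus h}$, so by Lemma~\ref{LemmaSub} it suffices to verify that multiplication by $p$ acts surjectively on $H^2_m H^{n-1}_I(S)$. To do so I will adapt the argument of Lemma~\ref{LemmaDimPureTwoInj}: the Hartshorne--Lichtenbaum Vanishing Theorem applied to the regular local ring $S/pS$ of dimension $n$ gives $H^n_I(S/pS)=0$ (since $\sqrt{I+pS}\neq m$), forcing $p$ to act surjectively on $H^n_I(S)$ via the long exact sequence associated to $0 \to S \xrightarrow{\cdot p} S \to S/pS \to 0$. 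Propagating this to $H^{n-1}_I(S)$ and applying $H^\bullet_m(-)$, together with the vanishing of $H^2_m H^{n-1}_I(S/pS)$ coming from the support-dimension bound in $S/pS$, should deliver the required $p$-surjectivity on $H^2_m H^{n-1}_I(S)$.
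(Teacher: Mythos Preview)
Your first two paragraphs are correct and parallel the proof of Lemma~\ref{LCVPureTwo}. The five-term sequence, the vanishing of $H^0_m$ and $H^1_m$, and the four-term sequence all go through as you describe (the spectral-sequence detour to $H^2_m H^n_I(S)=0$ is actually unneeded, since the sequence terminates because $H^3_m H^{n-1}_I(S)=0$ by the dimension bound $\dim H^{n-1}_I(S)\le 2$).

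Your third paragraph, however, is only a sketch, and the crucial implication is not established. Knowing that $p$ is surjective on $H^n_I(S)$ and that $H^2_m H^{n-1}_I(S/pS)=0$ does \emph{not} immediately give $p$-surjectivity on $H^2_m H^{n-1}_I(S)$: the functor $H^2_m$ is not exact, and $H^2_m(N)/pH^2_m(N)$ is not the same as $H^2_m(N/pN)$. To make your strategy work you must factor $N\xrightarrow{p}N$ as $N\twoheadrightarrow pN\hookrightarrow N$, and then show separately that $H^2_m(N)\to H^2_m(pN)$ is surjective (which needs $H^3_m(N[p])=0$, a dimension bound) and that $H^2_m(pN)\to H^2_m(N)$ is surjective (which needs $H^2_m(N/pN)=0$; this follows since $N/pN\hookrightarrow H^{n-1}_I(S/pS)$ and the latter is supported only at primes of $S/pS$ of height $\ge n-1$, hence has dimension $\le 1$). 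None of this is in your write-up, and ``should deliver'' does not substitute for it.

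By contrast, the paper's proof is a two-line reduction: write $I=J_1\cap J_2$ with $J_1$ pure of dimension $1$ and $J_2$ pure of dimension $2$; since $\height J_1=n$ and $\height(J_1+J_2)\ge n$, the Mayer--Vietoris sequence gives $H^{n-1}_I(S)\cong H^{n-1}_{J_2}(S)$, and Lemma~\ref{LCVPureTwo} finishes. This avoids redoing the entire argument and sidesteps the $p$-surjectivity analysis altogether.
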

\begin{proof}
Let $J_1$ and $J_2$ be two ideals of pure dimensions $1$ and $2$, respectively, such that $I=J_1\cap J_2$. Using the Mayer-Vietoris sequence of local cohomology,
we obtain that $H^{n-1}_{I}(S)=H^{n-1}_{J_2}(S)$. Thus, for all $j$, $H^{j}_m (H^{n-1}_{I}(S))=H^j_m(H^{n-1}_{J_2}(S))$, and the result follows 
by Lemma \ref{LCVPureTwo}.
\end{proof}

\begin{Prop}\label{PropInj}
Let $(V,pV,K)$ be a complete DVR of unramified mixed characteristic $p>0$, and let $S=V[[x_1\ldots, x_n]]$.
Let $I$ be an ideal of $S$ such that $\dim(S/I)=d$, and let $m$ denote its maximal ideal.
Then $H^{d}_m H^{n-d+1}_I (S)$ is an injective  $S$-module, and $H^{j}_m H^{n-d+1}_I (S)=0$ for $j> d$.
\end{Prop}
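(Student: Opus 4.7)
The plan is to prove the two assertions separately. Set $N=H^{n-d+1}_I(S)$. The vanishing $H^{j}_m(N)=0$ for $j>d$ is immediate from Grothendieck's vanishing theorem: since $N$ is $I$-torsion, $\Supp_S(N)\subseteq V(I)$, hence $\dim_S N\leq d$.

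For the injectivity of $H^{d}_m(N)$, I would invoke Lemma \ref{LemmaSub}. The finiteness of Bass numbers in unramified mixed characteristic (Lyubeznik, N\'u\~nez-Betancourt) guarantees that the socle $\Hom_S(K,H^{d}_m(N))$ is finite-dimensional over $K$, so $H^{d}_m(N)$ embeds as a $D(S,V)$-submodule of $\bigoplus^{h}E_S(K)$ for some $h\in\NN$. It then suffices to show that multiplication by $p$ is surjective on $H^{d}_m(N)$.

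To establish this surjectivity, apply $H^{*}_I(-)$ to the short exact sequence $0\to S\xrightarrow{\cdot p}S\to S/pS\to 0$; the resulting long exact sequence yields an embedding $N/pN\hookrightarrow H^{n-d+1}_I(S/pS)$. Since $S/pS$ is a regular local ring of equal characteristic of dimension $n$, a support analysis of $H^{n-d+1}_{(I+pS)/pS}(S/pS)$ via the Hartshorne--Lichtenbaum Vanishing Theorem, in the same spirit as the preceding lemmas of this section, yields $\dim H^{n-d+1}_{(I+pS)/pS}(S/pS)\leq d-1$, and hence $\dim_S(N/pN)\leq d-1$. By Grothendieck vanishing, $H^{d}_m(N/pN)=0$; and since $\ker(\cdot p\colon N\to N)\subseteq N$ satisfies $\dim_S\ker(\cdot p)\leq d$, also $H^{d+1}_m(\ker(\cdot p))=0$. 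Factoring $\cdot p\colon N\to N$ as $N\twoheadrightarrow pN\hookrightarrow N$ and chasing the long exact sequences of $H^{*}_m(-)$ attached to $0\to\ker(\cdot p)\to N\to pN\to 0$ and $0\to pN\to N\to N/pN\to 0$, these vanishings force both $H^{d}_m(N)\to H^{d}_m(pN)$ and $H^{d}_m(pN)\to H^{d}_m(N)$ to be surjective, so their composition, which is multiplication by $p$ on $H^{d}_m(N)$, is surjective.

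The main technical obstacle is the dimension bound $\dim H^{n-d+1}_{(I+pS)/pS}(S/pS)\leq d-1$, which should be handled by a case analysis according to whether $p$ lies in a minimal prime of $I$ of maximal dimension $d$. If $p$ is in no such minimal prime, then $(I+pS)/pS$ has dimension $d-1$ and the module in question is the bottom local cohomology, supported on $V((I+pS)/pS)$ of dimension $d-1$. Otherwise, it sits one above the bottom of the local cohomology of $(I+pS)/pS$ in $S/pS$, and its support consists only of primes of $S/pS$ of dimension at most $d-2$ by the HLV-based support analysis that appears in Lemmas \ref{LemmaDimPureTwoInj} and \ref{LCVPureTwo}. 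Once this bound is established, everything else is a routine diagram chase.
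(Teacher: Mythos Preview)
Your approach is correct and genuinely different from the paper's. The paper argues by induction on $d$: the cases $d\le 2$ are Lemmas~\ref{LemmaOne} and~\ref{LCVTwo}, and for $d\ge 3$ one picks $r\in m$ avoiding the minimal primes of $I$ (and of $H^{n-d}_I(S)$) and invokes \cite[Proposition~2.1, Lemmas~2.3--2.4]{W} to obtain $H^{d}_m H^{n-d+1}_I(S)\cong H^{d-1}_m H^{n-d+2}_{I+rS}(S)$ together with the required vanishing, reducing to dimension $d-1$. Your argument bypasses both the induction and the external reference: Grothendieck vanishing gives $H^j_m(N)=0$ for $j>d$ immediately, and a single application of Lemma~\ref{LemmaSub} handles injectivity once surjectivity of $p$ on $H^d_m(N)$ is established via the dimension bound on $N/pN$. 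This is more self-contained, and it makes transparent that the obstruction to injectivity is governed by the behavior of $p$ rather than by an inductive chain of generic hyperplane sections.

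One correction to your final paragraph: the case analysis is unnecessary, and the claimed bound $\le d-2$ in the ``otherwise'' branch is false in general. For instance, with $S=V[[x,y,z,w]]$ and $I=(p,zx,zy)$ one has $d=3$, $p$ lies in the minimal prime $(p,z)$ of dimension $d$, yet $\bar I=(zx,zy)$ in $\bar S=S/pS$ has $(x,y)$ in the support of $H^{2}_{\bar I}(\bar S)$, giving dimension exactly $d-1$. Fortunately only $\le d-1$ is needed, and this holds uniformly with no cases and no appeal to HLV: for any prime $\bar Q$ of $\bar S$ with $\dim(\bar S/\bar Q)\ge d$ one has $\dim\bar S_{\bar Q}\le n-d<n-d+1$, so $H^{n-d+1}_{\bar I}(\bar S)_{\bar Q}=0$ by Grothendieck vanishing. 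With this simplification your argument is clean. (The assertion that $H^d_m(N)$ sits as a $D(S,V)$-submodule of a finite sum of copies of $E_S(K)$ uses the same implicit point about $D(S,V)$-structure that the paper invokes in Lemma~\ref{LemmaDimPureTwoInj}.)
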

\begin{proof}
We proceed by induction on $d$. If $d=0,1,$ or 2, we have the result by Lemmas \ref{LemmaOne} and \ref{LCVTwo}.
Suppose that $d\geq 3$ and the statement holds for $d-1$.
If $\Ass_S H^{n-d}_I(S) \neq \{ m \},$ we pick an element $r\in m$ that is neither in any minimal prime of $I$, nor of $H^{n-d}_I(S)$, which is possible
because $\Ass_S H^{n-d}_I(S)$ is finite (cf. \cite{LyuUMC,Nunez}).
On the other hand, $\Ass_S H^{n-d}_I(S) = \{ m\},$ we pick an element $r\in m$ not in any minimal prime of $I$. We have that 
$H^{d}_m (H^{n-d+1}_I(S)=H^{d-1}_m H^{n-d+2}_{I+rS}(S)$ and $H^{j}_m H^{n-d+1}_I(S)=H^{j-1}_m H^{n-d+2}_{I+rS}(S)=0$ 
for $j>d$ as in the proof of \cite[Proposition $2.1$]{W} because the conclusions
of in \cite[Lemmas 2.3 and 2.4]{W} hold in our case. Hence, the result follows by the induction hypothesis.
\end{proof}

\begin{Teo}\label{TeoInjDim}
Let $(S,m,K)$ be either a regular local ring of unramified mixed characteristic, or a regular local ring containing a field.
Let $n= \dim(S)$, and let $I$ be an ideal of $S$ such that $\dim(S/I)=d$. Then
$\InjDim H^{n-d}_I(S)= d$. 
\end{Teo}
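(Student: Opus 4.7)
The proof splits into the equal characteristic case (where $S$ contains a field) and the new unramified mixed characteristic case. The equal characteristic case is classical: Huneke--Sharp (in characteristic $p > 0$) and Lyubeznik (in characteristic zero) proved the sharper bound $\InjDim H^j_I(S) \leq \Dim_S H^j_I(S) \leq d$, and the lower bound $\InjDim H^{n-d}_I(S) \geq d$ follows from the nonvanishing of $\Ext^d_S(K,H^{n-d}_I(S))$ as in \cite[Property 4.4(iii)]{LyuDmodules}. I would cite these and focus on the mixed characteristic case below.

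In unramified mixed characteristic, the lower bound is immediate from Proposition \ref{PropertiesLyuMC}(iii): the nonvanishing $\newLyu_{d,d}(S/I) = \dim_K \Ext^d_S(K, H^{n-d}_I(S)) \neq 0$ implies that the Bass number $\mu_d(m, H^{n-d}_I(S))$ is strictly positive, which forces $\InjDim H^{n-d}_I(S) \geq d$.

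For the upper bound in unramified mixed characteristic, the plan is to induct on $d$. The base case $d = 0$ is immediate, as $H^n_I(S) \cong E_S(K)$ when $I$ is $m$-primary. For the inductive step, the localization formula $\InjDim_S M = \sup_P \InjDim_{S_P}(M_P)$, together with the observation that $(H^{n-d}_I(S))_P = H^{n-d}_{IS_P}(S_P)$ is again top-type local cohomology over $S_P$ for any nonmaximal $P \in V(I)$ (with $\Dim(S_P/IS_P) = d - \Dim(S/P) < d$), reduces the question to $P = m$, i.e., to showing $\Ext^j_S(K, H^{n-d}_I(S)) = 0$ for all $j > d$. Proposition \ref{PropertiesLyuMC}(ii) already provides this vanishing for $j > d+1$, so the remaining task is to prove $\Ext^{d+1}_S(K, H^{n-d}_I(S)) = 0$. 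To achieve this, set $R = S/pS$, a regular local ring of equal characteristic $p > 0$ and dimension $n-1$; generically $\Dim R/IR = d-1$, so Huneke--Sharp gives $\InjDim_R H^{n-d}_{IR}(R) \leq d-1$. The change-of-rings spectral sequence $\Ext^p_R(\operatorname{Tor}_q^S(K, R), H^{n-d}_{IR}(R)) \Rightarrow \Ext^{p+q}_S(K, H^{n-d}_{IR}(R))$, which has only two nonzero rows (since $R$ has projective dimension one over $S$, with $\operatorname{Tor}_0^S(K, R) = \operatorname{Tor}_1^S(K, R) = K$), then yields $\Ext^{d+1}_S(K, H^{n-d}_{IR}(R)) = 0$. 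Next, applying $H^\bullet_I(-)$ to $0 \to S \xrightarrow{\cdot p} S \to R \to 0$ produces the short exact sequence $0 \to H^{n-d}_I(S)/p H^{n-d}_I(S) \to H^{n-d}_{IR}(R) \to (0 :_{H^{n-d+1}_I(S)} p) \to 0$; combining this with Proposition \ref{PropInj} to bound the Ext groups of the $p$-torsion piece $(0 :_{H^{n-d+1}_I(S)} p)$, and then with the long exact sequence in $\Ext^\bullet_S(K,-)$ associated to $0 \to H^{n-d}_I(S) \xrightarrow{\cdot p} H^{n-d}_I(S) \to H^{n-d}_I(S)/p H^{n-d}_I(S) \to 0$ (in which multiplication by $p$ annihilates every $\Ext^\bullet_S(K,-)$), should deliver the desired vanishing.

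The main obstacle is the concluding combinatorial step: propagating the change-of-rings vanishing through the two short exact sequences above, while simultaneously bounding the Ext contributions of the $p$-torsion submodule of $H^{n-d+1}_I(S)$ using Proposition \ref{PropInj} together with Zhou's injective-dimension bound \cite{Zhou}. A small auxiliary argument is also needed in the degenerate case where $p$ lies in some minimal prime of $I$, in which case $p$ can be replaced by a generic element $r \in m \setminus m^2$ with the requisite properties, so that $S/rS$ again plays the role of the equal-characteristic ring $R$ above.
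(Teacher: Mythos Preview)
Your outline departs from the paper's route and has two real gaps. First, the degenerate-case patch is wrong: if $r\in m\setminus m^2$ is not a unit multiple of $p$, then $S/rS$ is \emph{not} of equal characteristic---it is again a regular local ring of mixed characteristic---so Huneke--Sharp is unavailable for it; and unit multiples of $p$ are exactly what you are trying to avoid. Second, and more seriously, the ``combinatorial step'' does not close with the tools you cite. Tracing the two short exact sequences (using $\Ext^{d+2}_S(K,H^{n-d}_I(S))=0$ from Zhou and $\Ext^{d+1}_S(K,H^{n-d}_{IR}(R))=0$ from your change-of-rings computation), one finds that $\Ext^{d+1}_S(K,H^{n-d}_I(S))$ is a quotient of $\Ext^d_S(K,T)$ with $T=(0:_{H^{n-d+1}_I(S)}p)$, so you must force this last group to vanish. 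But Proposition~\ref{PropInj} speaks only of $H^j_m H^{n-d+1}_I(S)$, not of Bass numbers of the submodule $T$; Zhou's bound on $H^{n-d+1}_I(S)$ gives only $\InjDim\leq d$, which controls $\Ext^{d+1}$ but not $\Ext^d$; and $T$ is not visibly an $F$-module or an iterated local cohomology module over $S/pS$, so the equal-characteristic bound $\InjDim\leq\dim$ is not available for it either. I do not see how to obtain $\Ext^d_S(K,T)=0$ from your ingredients.

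For contrast, the paper never reduces modulo $p$. After localizing to $m$ it takes the minimal injective resolution $E^\bullet$ of $H^{n-d}_I(S)$, uses Zhou to get $E^j=0$ for $j>d+1$, and uses induction on $\dim S$ to see that $E^d$ and $E^{d+1}$ are supported only at $m$. The $d$th cosyzygy then coincides with the cycle module $Z^d$ in the complex $H^0_m(E^\bullet)$, and $Z^d$ is shown to be injective directly via Proposition~\ref{PropInj} (which was built up from the low-dimensional Lemmas~\ref{LemmaOne}--\ref{LCVTwo}); hence $E^{d+1}=0$.
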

\begin{proof}
We need to prove that $\Ext^j(R_Q/QR_Q, H^i_{IR_Q} (R_Q))=0$ for every prime ideal $Q$ of $R$, all $i \in \NN$, and all $j>d.$
We may assume that $Q$ is $m$, the maximal ideal of $S$, because 
if $Q \subsetneq m$, then $\dim R_Q/IR_Q<d$ and 
$
\InjDim_{R_Q} H^i_{IR_Q} (R_Q)\leq \dim_{R_Q} H^i_{IR_Q} (R_Q)\leq d
$ 
by \cite[Theorem $5.1$]{Zhou}.

We proceed by induction on $n$.
If $n=0$, $S$ is a field and the result follows.
Assume that the statement holds for all such $S$ of dimension less than $n$.

Since the theorem is already true for regular local rings that contain a field (cf.\ \cite{Huneke,LyuDmodules,LyuInjDim}), we will focus
on the case where  $S$ is an unramified regular local ring of unramified mixed characteristic.

Let $E^*=E^1\to E^2\to\ldots $ be a minimal injective resolution for $H^{n-d+1}_I(S)$. 
By \cite[Theorem 5.1]{Zhou}, $E^j=0$ for $j>d+1$.  
For every prime ideal $Q\subseteq S$, $S_Q$ is
either an unramified regular local ring of mixed characteristic or a regular local ring containing a field. 
Moreover, $\dim(S_Q/IS_Q)\leq d-1$ for every prime ideal $Q \subsetneq m$. 
Thus, $(E^d)_Q=(E^{d+1})_Q=0$ by the inductive hypothesis.
Hence, $E^d$ and $E^{d+1}$ are supported only at $m$.

Let $M=\IM(E^{d-1}\to E^{d})=\Ker(E^{d}\to E^{d+1})$. It suffices prove that $M$ is an injective $S$-module. The modules 
$H^j_m H^{n-d}_I(S)$ can be computed from the complex
$$H^0_m(E^*)=H^0_m(E^1)\to H^0_m( E^2)\to\ldots $$
Let $B^j=\IM\left(H^0_m(E^{j-1})\to H^0_m(E^{j})\right)$ and 
$Z^j=\Ker\left(H^0_m(E^{j})\to H^0_m(E^{j+1})\right)$. Note that $Z^d=M$ since $E_d$ and $E_{d+1}$ are supported only at $m$.
Since $\InjDim Z^j \leq 1$ and $\InjDim H^j_m H^{n-d}_I(S) \leq 1$ by the proof of \cite[Theorem 5.1]{Zhou} or by
\cite[ Theorem $0.3$ ]{Nunez},
as in the proof of \cite[Theorem 5.1]{Zhou}, we obtain that $B^j$ is injective
from the following short exact sequences:
\begin{align*}
&0\to  Z^j \to H^0_m(E^j)\to B^j\to 0, \text{ and} \\
&0\to B^{j-1} \to Z^j\to H^j_m(H^{n-d}_I(S))\to 0.
\end{align*}
Since $H^{d}_m H^{n-d}_I(S)$ injective by Proposition \ref{PropInj}, we know that $Z^d=M$ is injective due to the short exact sequence
$
0\to B^{d-1} \to Z^d\to H^j_m H^{n-d}_I(S)\to 0.
$
Therefore, $E_{d+1}=0$, so $\InjDim H^{n-d}_I(S)=d$. 
\end{proof}

\begin{Def}[Highest $\newLyuNameSing$] \label{highest}
For $(R,m,K)$ a local ring of dimension $d$ such that $\Char(K)=p>0$,
the \emph{highest}  \emph{$\newLyuNameA$} \emph{$\newLyuNameB$} \emph{of $R$} is defined as
$\newLyu_{d,d} (R).$ 
\end{Def}

Note that the nomenclature ``highest" is justified since $\newLyu_{i,d}(R)=0$ for $i>d$ by Theorem \ref{TeoInjDim}.
Moreover, we may also justify the following definition:

\begin{Def}[Lyubeznik table in mixed characteristic]
For $(R,m,K)$ a local ring such that $\Char(K)=p>0$ and $d=\dim(R)$, the \emph{Lyubeznik table in mixed characteristic} is the $(d+1) \times (d+1)$ matrix $\NewLyu(R)$, where $\NewLyu(R)_{i,j} = \newLyu_{i,j}(R)$ for $0\leq i, j\leq d$.
\end{Def}

\begin{Rem}
Recall that for a local ring $R$ of dimension $d$ containing a field, the \emph{Lyubeznik table} of $R$ is defined as the $(d+1) \times (d+1)$ matrix $\NewLyu(R)$ such that $\NewLyu(R)_{i,j} = \lambda_{i,j}(R)$ for $0 \leq i,j \leq R$. This matrix contains all nonzero Lyubeznik numbers, and is also upper triangular, since $\lambda_{i,j}(R) = 0$ if either $i>j$ or $j>d$ \cite[Properties 4.4i, 4.4ii]{LyuDmodules}.

On the other hand, Proposition \ref{PropertiesLyuMC} and Theorem \ref{TeoInjDim} imply that the Lyubeznik table in mixed characteristic contain all nonzero Lyubeznik numbers in mixed characteristic.  However, Proposition \ref{PropertiesLyuMC}
only implies that the Lyubeznik table in mixed characteristic is nonzero below the subdiagonal.  
\end{Rem}

\section{Examples in characteristic $p>0$ where the equal-characteristic and the $\newLyuName$ are equal}\label{SameNum}

\begin{Lemma}\label{LemmaExtSurj}
Let $(V,pV,K)$ be a complete DVR of unramified mixed characteristic $p>0$, and let $S=V[[x_1\ldots, x_n]]$.
Let $M$ be an $S$-module such that $\dim_K\Ext^i_S(K,M)$ is finite for all $i\in \NN$. 
Suppose that $M\FDer{\cdot p} M$ is surjective. Then for all $i\in \NN$,
$$
\dim_K\Ext^i_S(K,M)=\dim_K\Ext^i_{S/pS} (K,\Ann_M pS).
$$
\end{Lemma}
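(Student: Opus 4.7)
The plan is to identify $\Ann_M pS$ with $\Hom_S(S/pS, M)$ as $S/pS$-modules, and then to invoke the Grothendieck composition-of-functors spectral sequence. Setting $F = \Hom_S(S/pS,-)\colon S\text{-}\mathrm{Mod}\to S/pS\text{-}\mathrm{Mod}$ and $G = \Hom_{S/pS}(K,-)$, the tensor-Hom adjunction gives $G\circ F = \Hom_S(K,-)$, and $F$ sends $S$-injectives to $S/pS$-injectives, because the adjunction $\Hom_{S/pS}\bigl(N,\Hom_S(S/pS,I)\bigr)=\Hom_S(N,I)$ is exact in $N$ whenever $I$ is $S$-injective. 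Consequently one has a convergent spectral sequence
$$E_2^{p,q} \;=\; \Ext^p_{S/pS}\bigl(K,\Ext^q_S(S/pS,M)\bigr) \;\Longrightarrow\; \Ext^{p+q}_S(K,M).$$

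Next I would compute the rows of $E_2$ using the free $S$-resolution $0\to S \xrightarrow{\cdot p} S \to S/pS \to 0$. Applying $\Hom_S(-,M)$ converts this to the two-term complex $M \xrightarrow{\cdot p} M$, so $\Ext^0_S(S/pS,M)=\Ann_M pS$, $\Ext^1_S(S/pS,M)=M/pM$, and $\Ext^q_S(S/pS,M)=0$ for $q\geq 2$. The hypothesis that multiplication by $p$ is surjective on $M$ forces $M/pM=0$, so the only nonzero row of $E_2$ is $q=0$. The spectral sequence therefore collapses to isomorphisms
$$\Ext^p_{S/pS}(K,\Ann_M pS) \;\cong\; \Ext^p_S(K,M)$$
for every $p$, and taking $K$-dimensions yields the asserted equality (in particular, the finiteness assumption guarantees both sides are finite).

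I do not anticipate a serious obstacle: the injectivity-preservation of $F$ is standard, and the $E_2$ computation is immediate. If one prefers a spectral-sequence-free route, one can instead apply $R\Hom_S(K,-)$ directly to the short exact sequence $0 \to \Ann_M pS \to M \xrightarrow{\cdot p} M \to 0$; since $p$ annihilates $K$ it acts as zero on every $\Ext^i_S(K,M)$, so the long exact sequence breaks into short exact sequences giving $\dim_K\Ext^i_S(K,\Ann_M pS)=\dim_K\Ext^{i-1}_S(K,M)+\dim_K\Ext^i_S(K,M)$. A parallel change-of-rings argument, applied to $K \otimes^L_S (S/pS) \simeq K \oplus K[1]$, yields $\dim_K\Ext^i_S(K,N)=\dim_K\Ext^i_{S/pS}(K,N)+\dim_K\Ext^{i-1}_{S/pS}(K,N)$ for any $S/pS$-module $N$; a one-step induction on $i$ then matches the two recursions and concludes the proof.
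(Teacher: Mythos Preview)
Your proposal is correct. The spectral-sequence argument is a genuinely different route from the paper's: the paper proceeds exactly along the lines of your ``spectral-sequence-free'' alternative. It applies $\Hom_S(K,-)$ to $0\to N\to M\xrightarrow{\cdot p} M\to 0$ (with $N=\Ann_M pS$) to obtain the recursion $\dim_K\Ext^i_S(K,N)=\dim_K\Ext^{i-1}_S(K,M)+\dim_K\Ext^i_S(K,M)$, then compares the Koszul complex on $p,x_1,\dots,x_n$ over $S$ with the Koszul complex on $\bar x_1,\dots,\bar x_n$ over $R=S/pS$; since $p$ kills $N$, the former applied to $N$ splits as the latter plus a shift, yielding $\dim_K\Ext^i_S(K,N)=\dim_K\Ext^i_R(K,N)+\dim_K\Ext^{i-1}_R(K,N)$. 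An induction on $i$ then matches the two recursions. Your Grothendieck spectral sequence bypasses both recursions and the induction, and in fact yields an honest isomorphism $\Ext^i_{S/pS}(K,\Ann_M pS)\cong\Ext^i_S(K,M)$ rather than merely an equality of dimensions; the paper's Koszul argument is more hands-on but only produces the numerical statement. Your derived-category phrasing $K\otimes^L_S S/pS\simeq K\oplus K[1]$ is precisely the abstract form of the paper's Koszul-splitting observation.
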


\begin{proof}
Let $R=S/pS$ and $N=\Ann_M (pS)$. The short exact sequence $0\to N\to M\FDer{\cdot p}M\to 0$
induces a long exact sequence
$$
0\to \Ext^0_S(K,N) \to \Ext^0_S(K,M)\FDer{\cdot p}\Ext^0_S(K,M)\to\Ext^1_S(K,N)\to\cdots.
$$
Since multiplication by $p$ is zero on $\Ext^i_S(K,M)$, we have 
short exact sequences
$$
0\to\Ext^{i-1}_S(K,M) \to \Ext^i_S(K,N) \to \Ext^i_S(K,M)\to 0.
$$
for all $i \in \NN$.  Thus,
$$
\dim_K\Ext^i_S(K,N)=\dim_K\Ext^{i-1}_S(K,M)+\dim_K\Ext^i_S(K,M).
$$
We can compute $\Ext^i_{S} (K,N)$ using the Koszul complex, $\cK$,
with respect to the 
sequence $p,x_1,\ldots,x_n$ in $S$. On the other hand,  we 
can compute $\Ext^i_{R} (K,N)$ using the Koszul complex, $\overline{\cK}$,
with respect to the 
sequence $\overline{x}_1,\ldots,\overline{x}_n$, in $R$.
Now, $\cK(N)$ is the direct sum of $\overline{\cK}(N)$ and an indexing shift  of the same 
complex by one. 
This means that 
\begin{align*}
\dim_K\Ext^i_S(K,N)=\dim_K\Ext^{i-1}_R(K,N)+\dim_K\Ext^i_R(K,N), \text{ so} 
\end{align*}
\begin{align*}
\dim_K\Ext^{i-1}_S(K,M)+\dim_K\Ext^i_S(K,M)=
\dim_K\Ext^{i-1}_R(K,N)+\dim_K\Ext^i_R(K,N).
\end{align*}
Since $\dim_K\Ext^{-1}_S(K,M)=\dim_K\Ext^{-1}_R(K,N)=0$, we know that $\dim_K\Ext^0_S(K,M)=\dim_K\Ext^0_{R} (K,N)$ as well.  Inductively, $\dim_K\Ext^i_S(K,M)=\dim_K\Ext^i_{R} (K,N)$ for all $i \in \NN$.
\end{proof}

\begin{Cor}\label{WellDefCM}
Let  $(V,pV,K)$ be a complete DVR of unramified mixed characteristic $p>0$, and let $S=V[[x_1\ldots, x_n]]$.
Let $I$ be an ideal of $S$ such that $S/I$ is a Cohen-Macaulay ring of characteristic $p$. 
Then for all $i, j \in \NN$,
$$\dim_K\Ext^{i}_{S/pS} (K,H^{n-j}_{I S/pS} (S/pS))=\dim_K\Ext^{i}_S(K,H^{n+1-j}_{I} (S)).$$
\end{Cor}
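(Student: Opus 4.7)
The plan is to apply Lemma \ref{LemmaExtSurj} with $M = H^{n+1-j}_I(S)$. Once one verifies that (a) $M \xrightarrow{\cdot p} M$ is surjective and (b) $\Ann_M(pS) = H^{n-j}_{IS/pS}(S/pS)$, the corollary follows directly from the lemma, together with the finiteness of Bass numbers over $S$ (which holds by \cite{LyuUMC, Nunez}). Both (a) and (b) can be read off from the long exact sequence obtained by applying $H^\bullet_I(-)$ to $0 \to S \xrightarrow{\cdot p} S \to S/pS \to 0$.

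Set $d = \dim(S/I)$. Since $S/I$ has characteristic $p$, we have $p \in I$, and $S/I$ is a Cohen-Macaulay quotient of the $n$-dimensional regular local ring $S/pS$ of equal characteristic $p$. By the classical vanishing theorem for Cohen-Macaulay (equivalently, perfect) quotients of regular local rings, $H^k_{IS/pS}(S/pS) = 0$ for every $k \neq n-d$. Moreover, $S$ is regular of dimension $n+1$ and $\height I = n+1-d$, so $H^k_I(S) = 0$ for $k < n+1-d$. Using that $H^k_I(N) = H^k_{IS/pS}(N)$ for any $S/pS$-module $N$, the long exact sequence
$$\cdots \to H^{k-1}_I(S/pS) \to H^k_I(S) \xrightarrow{\cdot p} H^k_I(S) \to H^k_I(S/pS) \to H^{k+1}_I(S) \to \cdots$$
specializes at $k = n-d$ to the short exact sequence
$$0 \to H^{n-d}_{IS/pS}(S/pS) \to H^{n+1-d}_I(S) \xrightarrow{\cdot p} H^{n+1-d}_I(S) \to 0,$$
giving (a) and (b) in the case $j = d$.

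For each $k > n+1-d$, both neighboring terms $H^{k-1}_I(S/pS)$ and $H^k_I(S/pS)$ vanish in the long exact sequence, so multiplication by $p$ on $H^k_I(S)$ is an isomorphism. However, $H^k_I(S)$ is $I$-torsion and $p \in I$, so every element is annihilated by some power of $p$; an isomorphism cannot carry any nonzero $p$-torsion element, forcing $H^k_I(S) = 0$. Consequently $H^{n+1-j}_I(S) = 0$ and $H^{n-j}_{IS/pS}(S/pS) = 0$ for every $j \neq d$, and both sides of the claimed equality are zero in that case. For $j = d$, combining the displayed short exact sequence with Lemma \ref{LemmaExtSurj} yields the asserted equality.

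The main technical input is the Peskine-Szpiro vanishing for Cohen-Macaulay quotients of the regular local ring $S/pS$; beyond that, only the bookkeeping in the long exact sequence (using that $H^k_I(S)$ is $I$-torsion, hence $p$-torsion) is required. I anticipate no further obstacle.
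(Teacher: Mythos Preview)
Your proof is correct and follows essentially the same route as the paper's: apply $H^\bullet_I(-)$ to $0\to S\xrightarrow{\cdot p}S\to S/pS\to 0$, use Peskine--Szpiro vanishing over the characteristic-$p$ regular ring $S/pS$ to isolate the single degree $n-d$, conclude that $H^k_I(S)$ vanishes for $k\neq n+1-d$ (via the $p$-torsion argument), and then invoke Lemma~\ref{LemmaExtSurj}. The only slip is a harmless index: the short exact sequence you display is the segment of your long exact sequence at $k=n+1-d$, not $k=n-d$.
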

\begin{proof}
Let $R=S/pS$. The short exact sequence $0\to S\FDer{\cdot p} S\to R\to 0$ induces the short exact sequence
$$
0\to H^{n-d}_I(R)\to
H^{n-d+1}_I(S)\FDer{\cdot p} 
H^{n-d+1}_I(S)\to 0
$$
since $H^{n-d+1}_I(R) = 0$ by \cite[Proposition 4.1]{P-S}.
Since $H^{n-d}_I(S)=0$   $H^{i}_I(S)\FDer{p} H^{i}_I(S)$ is injective for $i\neq n-d+1$, and $H^{n-d}_I(S)=0$. 
The result then follows from Lemma \ref{LemmaExtSurj}.
\end{proof}

\begin{Prop}\label{AgreeDimTwo}
Let $(V,pV,K)$ be a complete DVR of unramified mixed characteristic $p>0$, and let $S=V[[x_1\ldots, x_n]]$.
Let $I$ be an ideal of $S$ containing $p$, such that $\dim(S/I)\leq 2$.
Then 
$$
\dim_K\Ext^{d}_{S/pS} (K,H^{n-d}_{I S/pS} (S/pS))=\dim_K\Ext^{d}_S(K,H^{n+1-d}_{I} (S)).
$$
\end{Prop}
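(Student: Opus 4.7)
The plan is to case-analyze on $d = \dim(S/I)$, which is at most $2$ by hypothesis. The cases $d \leq 1$ are straightforward modifications of the argument in Corollary \ref{WellDefCM}; the case $d = 2$ is the substance of the proposition.

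For $d \leq 1$, applying the Hartshorne--Lichtenbaum vanishing theorem to $R = S/pS$ (which is a regular local ring of dimension $n$) gives $H^{n-d+1}_{IR}(R) = 0$ when $d = 1$ (since $\dim R/IR = 1 > 0$), and the case $d = 0$ is automatic since the module lies beyond $\dim R$. Combined with $H^{n-d+2}_I(S) = 0$ (Hartshorne--Lichtenbaum on $S$, using $\dim S/I = d \geq 0$), the long exact sequence of local cohomology attached to $0 \to S \xrightarrow{p} S \to R \to 0$ reduces to
$$0 \to H^{n-d}_{IR}(R) \to H^{n-d+1}_I(S) \xrightarrow{p} H^{n-d+1}_I(S) \to 0.$$
Thus $p$ acts surjectively on $H^{n-d+1}_I(S)$ with kernel $H^{n-d}_{IR}(R)$, and Lemma \ref{LemmaExtSurj} yields the desired equality of Bass numbers.

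For $d = 2$, set $M := H^{n-1}_I(S)$. Lemma \ref{LCVTwo} gives $H^0_m M = H^1_m M = 0$ and $H^2_m M$ injective, while $H^q_m M = 0$ for $q > 2$ by Grothendieck vanishing (as $\dim \mathrm{Supp}\, M \leq \dim V(I) = 2$). Consequently the Grothendieck spectral sequence $E_2^{p,q} = \Ext^p_S(K, H^q_m M) \Rightarrow \Ext^{p+q}_S(K, M)$ collapses, giving
$$\Ext^2_S(K, H^{n-1}_I(S)) \cong \Hom_S(K, H^2_m H^{n-1}_I(S)).$$
Reducing to pure dimension $2$ via Mayer--Vietoris as in the proof of Lemma \ref{LCVTwo}, Lemma \ref{LCVPureTwo} yields $H^2_m H^{n-1}_I(S) \cong E_S(K)^{\alpha + 1}$, where $\alpha := \dim_K \Hom_S(K, H^n_I(S))$; hence $\dim_K \Ext^2_S(K, H^{n-1}_I(S)) = \alpha + 1$. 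Running the parallel argument over the equal-characteristic regular local ring $R$ --- invoking the char-$p$ counterparts of Lemmas \ref{LCVTwo} and \ref{LCVPureTwo}, which follow from Lyubeznik's $F$-module / $D$-module theory --- yields $\dim_K \Ext^2_R(K, H^{n-2}_{IR}(R)) = \alpha' + 1$ with $\alpha' := \dim_K \Hom_R(K, H^{n-1}_{IR}(R))$.

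The proof thus reduces to establishing $\alpha = \alpha'$. Since $p$ is surjective on $H^n_I(S)$ (because $H^n_{IR}(R) = 0$ by Hartshorne--Lichtenbaum applied to $R$, and $H^{n+1}_I(S) = 0$), Lemma \ref{LemmaExtSurj} applied to $H^n_I(S)$ gives $\alpha = \dim_K \Hom_R(K, \mathrm{Im}\,\delta)$, where $\delta : H^{n-1}_{IR}(R) \to H^n_I(S)$ is the connecting homomorphism from the long exact sequence. The same long exact sequence provides
$$0 \to H^{n-1}_I(S)/pH^{n-1}_I(S) \to H^{n-1}_{IR}(R) \to \mathrm{Im}\,\delta \to 0,$$
so the comparison $\alpha = \alpha'$ will follow once one shows $H^{n-1}_I(S)/pH^{n-1}_I(S) = 0$, that is, that $p$ acts surjectively on $H^{n-1}_I(S)$. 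This $p$-surjectivity is the main obstacle; it can be verified by exploiting the \v{C}ech complex computation together with the hypothesis $p \in I$, which forces the relevant top-degree \v{C}ech terms to invert $p$, combined with a Mayer--Vietoris reduction to pure-dimension-$2$ minimal primes (each containing $p$) where the top local cohomology is computed by a \v{C}ech complex whose leading term has $p$ as a unit.
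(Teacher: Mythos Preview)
Your handling of $d \le 1$ is correct and matches the paper. For $d = 2$, your spectral-sequence computation of $\Ext^2_S(K, H^{n-1}_I(S))$ via Lemma~\ref{LCVPureTwo}, and the reduction of the problem to verifying $\alpha = \alpha'$, are both fine.

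The genuine gap is the final step: the surjectivity of $p$ on $H^{n-1}_I(S)$. Your \v{C}ech-complex heuristic does not work. The condition $p \in I$ alone does not force $p$-surjectivity on local cohomology, and indeed the paper's own Section~\ref{DifferentNum} furnishes a counterexample (there $p \in I$ but multiplication by $p$ is not surjective on $H^4_I(S) \cong H^3_I(S_p/S)$). The top-degree \v{C}ech term inverts $p$ only when the cohomological degree equals the number of generators of $I$, which is not $n-1$ in general. Your proposed Mayer--Vietoris reduction to minimal primes is also problematic: distinct minimal primes of $I$ are not comaximal in general, so Mayer--Vietoris does not split $H^{n-1}_I(S)$ as a direct sum over them; and even a single minimal prime of height $n-1$ need not be a set-theoretic complete intersection on $n-1$ elements, so the top-\v{C}ech argument still does not apply.

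The paper circumvents this by identifying both sides directly with the number $\alpha$ of connected components of the punctured spectrum of $\widehat{(R/I)^{sh}}$. On the equal-characteristic side this is \cite[Proposition~2.2]{Walther2}; on the mixed-characteristic side it is established by induction on $\alpha$, peeling off one connected component at a time via Mayer--Vietoris (the ideals of distinct components \emph{are} comaximal, so the sequence does split), with the base case $\alpha=1$ handled by the second vanishing theorem \cite[Theorem~2.9]{H-L} (giving $H^{n-1}_{IR}(R)=0$, whence the desired $p$-surjectivity) together with Lemma~\ref{LemmaExtSurj}. If you want to repair your approach, this is exactly the missing ingredient: decompose along connected components rather than minimal primes, and invoke the second vanishing theorem on each piece.
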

\begin{proof}
Let $R=S/pS$.  Consider the following cases.

If $\dim(S/I)=0$, $H^{n+1}_I (S)=E_S(K)$ and $H^{n}_{I} (S)=E_R(K)$. Then
$$
\dim_K\Ext^{d}_{S/pS} (K,H^{n-d}_{I S/pS} (S/pS))=\dim_K\Ext^{d}_S(K,H^{n+1-d}_{I} (S))=1.
$$  

If $\dim(S/I)=1$, the short exact sequence $0\to S\FDer{\cdot p} S\to R\to 0$ induces a long exact sequence
$
0\to H^{n-1}_I(R)\to H^{n}_I (S)\FDer{\cdot p} H^{n}_I (S)\to 0 
$
by 
the Hartshorne-Lichtenbaum vanishing theorem. The proposition then follows from Lemma \ref{LemmaExtSurj}.

Suppose that $\dim(S/I)=2.$
First assume that $I $ has pure dimension $2$.
Let $\alpha$ be the number of connected components of  
$\Spec (\widehat{A})\setminus \{m\}$, where $A=\widehat{R/I}^{sh}$ is the strict Henselization of $R/I$. 
In fact, $\alpha=\dim_K\Ext^{2}_{R} (K,H^{n-2}_{I} (S/pS))$ (cf.\ \cite[Proposition $2.2$]{Walther2}).

We prove the statement by induction on $\alpha$.
If $\alpha=1$, the short exact sequence $0\to S\FDer{\cdot p} S\to R\to 0$ induces the short exact 
sequence
$$
0\to H^{n-2}_I (R)\to H^{n-1}_I (S)\FDer{\cdot p} H^{n-1}_I (S) \to 0,
$$
since $H^{n-1}_I(R)=0$ by \cite[Theorem $2.9$]{H-L}.  The proposition then follows from Lemma \ref{LemmaExtSurj}.
If $\alpha>1$, we pick ideals $J_1,\ldots, J_\alpha$ such that $I=J_1\cap\ldots \cap J_\alpha$, and each $J_k$ defines a connected component of
$\Spec (\widehat{A})\setminus \{m\}$. Let $J$ denote $ J_1\cap\ldots \cap J_{\alpha-1}$. Using the Mayer-Vietoris sequence, we obtain
an isomorphism
$$
H^{n-1}_{J}(S)\oplus H^{n-1}_{J_\alpha}(S)\cong  H^{n-1}_{I}(S)
$$
because $\sqrt{J+J_\alpha}=m$. Then 
\begin{align*} \dim_K \Ext^2_S(K,H^{n-1}_{I}(S)) = \dim_K \Ext^2_S(K,H^{n-1}_{J}(S))+ \dim_K\Ext^2_S(K,H^{n-1}_{J_\alpha}(S)) =\alpha.\end{align*}
By Lemma \ref{LCVPureTwo}, \cite[Lemma $1.4$]{LyuDmodules}  and \cite[Proposition $2.2$]{Walther2}, the other numbers are determined by $\alpha$.

For the general case such that $\dim(S/I)=2,$
let $P_1,\ldots, P_r$ be the minimal primes of dimension one of $I$, and let
$Q_1,\ldots, Q_s$ be the minimal primes of dimension two of $I$.
Let $J_1=P_1\cap\ldots \cap P_r$ and $J_2=Q_1\cap\ldots \cap Q_s$.
We claim that $\Ext^j_S(K,H^{n-1}_{I}(S))=\Ext^j_S(K,H^{n-1}_{J_2}(S)).$
Let $f_1,\ldots,f_\ell\in J_2\setminus I$ such that $I +(f_1,\ldots,f_\ell)S=J_2.$
We proceed by induction on $\ell$; first assume that $\ell=1.$
Since $H^{n-1}_{I}(S)=H^{n-1}_{J_2}(S),$ $H^{n-1}_I(S_{f_1})=0.$
The long exact sequence
\[ \xymatrix@R=.65cm{ \ar[r] 
0 & H^{n-1}_{I+f_1S}(S) \ar[r] & H^{n-1}_I(S) \ar[r] & H^{n-1}_I(S_{f_1})  \connarrow & \\ 
& H^{n}_{I+f_1S}(S)  \ar[r] & H^{n}_I(S) \ar[r] & H^{n}_I(S_{f_1})  \ar[r] & 0, \\ 
}\]
then indicates both that $ H^{n-1}_{I+f_1S}(S)\cong H^{n-1}_I(S)$, and that
$$ 0\to H^{n}_{I+f_1S}(S)\to H^{n}_I(S)\to H^{n}_I(S_f)\to 0$$ is exact.
Hence, $\Ext^j_S(K,H^{n-1}_{I}(S))=\Ext^j_S(K,H^{n-1}_{I+f_1S}(S)).$
Moreover, $I+f_1S\subseteq J_2$ is an ideal of dimension $2$, whose minimal primes of dimension $2$ are $P_1,\ldots, P_r.$ 
If we assume that the claim is true for $\ell,$ the proof for $\ell+1$ is analogous to the previous part.
\end{proof}

\begin{Cor}\label{Agree}
Let $(R,m,K)$ be a local ring of characteristic $p>0.$
If $R$ is a Cohen-Macaulay ring or if $\dim R\leq2,$ then for $i,j \in \NN$, $\newLyu_{i,j}(R) = \lambda_{i,j}(R)$.
\end{Cor}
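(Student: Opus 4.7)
The proof is essentially a bookkeeping exercise that combines the two technical comparison results already in hand: Corollary \ref{WellDefCM} (for the Cohen--Macaulay case) and Proposition \ref{AgreeDimTwo} (for the $\dim R\leq 2$ case).

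First I would reduce to the complete case: both $\lambda_{i,j}(R)$ and $\newLyu_{i,j}(R)$ factor through $\widehat{R}$, and $\widehat{R}$ remains Cohen--Macaulay (respectively, has the same dimension as $R$). By the Cohen Structure Theorems, fix a surjection $\pi\colon S\twoheadrightarrow \widehat{R}$ where $S=V[[x_1,\ldots,x_{n-1}]]$ is an $n$-dimensional unramified mixed characteristic regular local ring; set $I=\Ker\pi$. Since $\Char R=p$, we have $pS\subseteq I$, so $\pi$ factors as $\overline{\pi}\colon S/pS\twoheadrightarrow \widehat{R}$ with kernel $I'=I/pS$. By Remark \ref{DefnAgree}, the claim reduces to the identity
\[
\dim_K\Ext^i_{S/pS}\bigl(K,H^{n-j-1}_{I'}(S/pS)\bigr)=\dim_K\Ext^i_S\bigl(K,H^{n-j}_I(S)\bigr) \qquad \text{for all } i,j\in\NN.
\]

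For the Cohen--Macaulay case, I would simply apply Corollary \ref{WellDefCM} with $j$ replaced by $j+1$; its hypotheses are met because $S/I\cong \widehat{R}$ is Cohen--Macaulay, and the resulting identity is exactly the displayed equation. For the $\dim R\leq 2$ case I would split on $\dim R$: dimension $0$ is subsumed by the Cohen--Macaulay case. For $\dim R\in\{1,2\}$ I would invoke Proposition \ref{AgreeDimTwo}; in the proof of that proposition, the key mechanism is always that the short exact sequence $0\to S\FDer{\cdot p}S\to S/pS\to 0$ together with Hartshorne--Lichtenbaum vanishing (and in dimension two, a Mayer--Vietoris splitting $I=J_1\cap J_2$ into its pure-dimensional parts) gives a surjection $H^{n-j}_I(S)\FDer{\cdot p}H^{n-j}_I(S)$ whose kernel is $H^{n-j-1}_{I'}(S/pS)$, after which Lemma \ref{LemmaExtSurj} produces the displayed equality of $\Ext$-dimensions for \emph{every} $i$ at once.

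The main bookkeeping obstacle is the mismatch in vanishing ranges: Proposition \ref{PropertiesLyuMC} only yields $\newLyu_{i,j}=0$ for $i>j+1$, whereas on the equal-characteristic side $\lambda_{i,j}=0$ already for $i>j$. One must verify that the ``extra'' subdiagonal entries $\newLyu_{j+1,j}$ also vanish in the cases at hand, so that the two tables actually have the same support. For the Cohen--Macaulay case this is automatic from Corollary \ref{WellDefCM}. For $\dim R\leq 2$, the relevant top local cohomology module is either injective with finite Bass numbers (Lemmas \ref{LemmaDimPureTwoInj} and \ref{LCVPureTwo}) or fits into a controlled extension thereof, forcing the subdiagonal Bass numbers to be zero; alternatively and more cleanly, the equality supplied by Lemma \ref{LemmaExtSurj} forces $\newLyu_{j+1,j}=\lambda_{j+1,j}=0$ in a single stroke. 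Once this is checked, every entry of the two Lyubeznik tables matches, completing the proof.
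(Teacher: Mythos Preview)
Your proposal is correct and follows the same approach as the paper: reduce to the complete case (both invariants, dimension, and Cohen--Macaulayness pass to $\widehat{R}$), then invoke Corollary~\ref{WellDefCM} and Proposition~\ref{AgreeDimTwo}. Your final paragraph about a ``bookkeeping obstacle'' is superfluous: once those two results hand you $\newLyu_{i,j}(R)=\lambda_{i,j}(R)$ for \emph{all} $i,j$, the subdiagonal vanishing $\newLyu_{j+1,j}=0$ is an immediate consequence rather than something requiring separate verification, as you yourself note at the end.
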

\begin{proof}
Since dimension, Cohen-Macaulayness, and both Lyubeznik numbers are preserved after completion, we can assume that $R$ is complete.
Then the result follows from Corollary \ref{WellDefCM} and Proposition \ref{AgreeDimTwo}.
\end{proof}
\section{An example for which the equal-characteristic and the $\newLyuName$ differ}\label{DifferentNum}

Throughout this section, we will often refer to the following ring and ideal:

\begin{Notation}\label{Notation}
Let $R=\ZZ_Q[x_1,\ldots,x_6]$, where  $p=2$ and $Q=p\ZZ$. 
Moreover, let $I$ denote the ideal of $R$ generated by the 11 elements
$$p, x_1x_2x_3 , x_1x_2x_4 , x_1x_3x_5 ,x_1x_4x_6 , x_1x_5x_6 ,$$
 $$x_2x_3x_6 ,x_2x_4x_5 , x_2x_5x_6 , x_3x_4x_5 , x_3x_4x_6.$$
\end{Notation}

\begin{Rem}\label{Rem1}
It is easily checked that for $I \subseteq R$ as in Notation \ref{Notation}, $\Depth_{I}(R)=4$.  Thus, the short exact sequence $0\to R\to R_p\to R_p/R\to 0$ induces the long exact sequence
\begin{equation}\label{LESL}
0\to H^3_I(R_p/R) \to H^4_I(R) \to H^4_I(R_p)\to H^4_I(R_p/R)\to \cdots.
\end{equation}
Since $H^i_I(R)$ is supported at $p\in I$, for all $i \in \NN$, $H^i_I(R_p)=0$, so $H^i_I(R_p/R) \cong H^{i+1}_I(R)$.
\end{Rem}

\begin{Prop}\label{NotSurj}
With $R,p,$ and $I$ as in Notation \ref{Notation},  the map
$$
H^{3}_I(R_p /R) \FDer{\cdot p} H^{3}_I(R_p /R)
$$ 
is not surjective.
\end{Prop}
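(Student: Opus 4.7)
The plan is to reduce non-surjectivity of multiplication by $p$ on $H^3_I(R_p/R)$ to non-vanishing of a Bockstein homomorphism on a Stanley-Reisner ring. By Remark \ref{Rem1}, the $R$-linear isomorphism $H^3_I(R_p/R) \cong H^4_I(R)$ intertwines the $\cdot p$ action, so it suffices to prove that $\cdot p \colon H^4_I(R) \to H^4_I(R)$ is not surjective. Applying $H^{*}_I(-)$ to $0 \to R \xrightarrow{p} R \to R/pR \to 0$ yields
\[
\cdots \to H^4_I(R) \xrightarrow{\cdot p} H^4_I(R) \xrightarrow{\rho} H^4_I(R/pR) \to \cdots,
\]
so surjectivity of $\cdot p$ is equivalent to $\rho = 0$, and the goal becomes $\rho \neq 0$.

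Next, I would compare this long exact sequence with the standard Bockstein sequence. Since $R$ is $p$-torsion-free, the reductions mod $p$ and mod $p^2$ fit into a commutative diagram of short exact sequences
\[
\xymatrix@C=.8cm{
0 \ar[r] & R \ar[r]^{p} \ar[d] & R \ar[r] \ar[d] & R/pR \ar[r] \ar@{=}[d] & 0 \\
0 \ar[r] & R/pR \ar[r]^{p} & R/p^2R \ar[r] & R/pR \ar[r] & 0
}
\]
and naturality of the connecting homomorphism forces $\rho \circ \delta = \beta$, where $\delta \colon H^3_I(R/pR) \to H^4_I(R)$ is the connecting map from the top row and $\beta \colon H^3_{\bar I}(R/pR) \to H^4_{\bar I}(R/pR)$ is the Bockstein homomorphism from the bottom row, with $\bar I = I \cdot R/pR$. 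Consequently it suffices to show $\beta \neq 0$.

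Finally, $R/pR = \FF_2[x_1,\ldots,x_6]$ and the list in Notation \ref{Notation} shows that $\bar I$ is precisely the Stanley-Reisner ideal of the classical minimal six-vertex triangulation of $\mathbb{RP}^2$. Singh and Walther \cite{Bockstein} identify the Bockstein on local cohomology supported in a Stanley-Reisner ideal with the reduced simplicial Bockstein of the complex via a $\ZZ^n$-graded decomposition of the \v{C}ech complex, and the topological Bockstein $H^1(\mathbb{RP}^2;\FF_2) \to H^2(\mathbb{RP}^2;\FF_2)$ is nonzero (detecting the $\ZZ/2\ZZ$ torsion in $H_1(\mathbb{RP}^2;\ZZ)$). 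Combined with the explicit calculation of the local cohomology modules of this Stanley-Reisner ring carried out by \`Alvarez Montaner and Vahidi \cite{LyuNumMontaner}, this furnishes a nonzero component of $\beta$ in the required degree, whence $\rho \circ \delta = \beta \neq 0$. The chief technical hurdle is bookkeeping: one must align the conventions of \cite{Bockstein} and \cite{LyuNumMontaner} (degree indexing, the $\ZZ^n$-grading, and the particular Bockstein summand) with the diagram above, so that the nonzero class identified in those works corresponds exactly to the degree-$3$ to $4$ Bockstein appearing here.
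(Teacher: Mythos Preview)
Your argument is correct and rests on the same key input as the paper's proof, namely the nonvanishing of the Bockstein $\beta\colon H^3_{\bar I}(R/pR)\to H^4_{\bar I}(R/pR)$ for the Stanley--Reisner ideal of the minimal triangulation of $\mathbb{RP}^2$; this is precisely \cite[Example~5.10]{Bockstein}, so you can cite it directly rather than reconstructing it from the topological Bockstein and \cite{LyuNumMontaner} (the latter reference is not needed at this step).

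The packaging, however, differs from the paper's.  You first invoke the isomorphism $H^3_I(R_p/R)\cong H^4_I(R)$ from Remark~\ref{Rem1} and then argue via naturality of connecting maps in the ladder of short exact sequences $0\to R\to R\to R/pR\to 0$ over $0\to R/pR\to R/p^2R\to R/pR\to 0$, obtaining $\beta=\rho\circ\delta$ and hence $\rho\neq 0$.  The paper instead stays with $H^3_I(R_p/R)$: from $0\to R/p^\ell R\to R_p/R\xrightarrow{p^\ell}R_p/R\to 0$ it identifies $H^3_I(R/p^\ell R)$ with $\Ann_{H^3_I(R_p/R)}p^\ell$, uses $\beta\neq 0$ to see that $H^3_I(R/p^2R)\to H^3_I(R/pR)$ is not surjective, translates this to non-surjectivity of $\Ann\,p^2\xrightarrow{\cdot p}\Ann\,p$, and then observes that surjectivity of $\cdot p$ on the whole module would force surjectivity on this annihilator map.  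Your route is a bit more streamlined; the paper's route has the minor advantage of setting up the annihilator identifications that are reused later in the section.
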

\begin{proof}  
Since $\Depth_{I}(R)=4,$ $H^{0}_I(R_p/R)=H^{1}_I(R_p/R)=H^{2}_I(R_p/R)=0$ by the long exact sequence in local cohomology (see Remark \ref{Rem1}).  For every $\ell\in\NN$, the exact sequence
$0\to R/p^\ell R \to R_p/R \FDer{p^\ell } R_p/R\to 0$
induces a long exact sequence
\begin{equation}\label{LESQ}
0\to H^{3}_I(R/p^\ell R)\to H^{3}_I(R_p/R)\FDer{p^\ell} H^{3}_I(R_p/R)\FDer{\partial} 
H^{4}_I(R/p^\ell R)\to  \cdots.
\end{equation}
Note that $H^{3}_I(R/p^\ell R)=\Ann_{H^{3}_I(R_p/ R)}(p^\ell R).$

As the direct limit functor is exact, 
the limit of the direct system of short exact sequences in Figure \ref{Figure1} is the short exact sequence $0\to R/pR\to R_p/R \FDer{\cdot p} R_p/R\to 0.$
 Moreover, $ H^j_I(R_p/R)=\lim \limits_{\longrightarrow} H^j_I(R/p^\ell R)$.

\begin{figure}[h!]
{
\[ \xymatrix@R=.65cm{ \ar[r] 
0 & R/pR \ar[d]_= \ar[r]^{\cdot p} & R/p^2 R \ar[d]_{\cdot p} \ar[r] & R/pR  \ar[d]_{\cdot p} \ar[r] & 0 \\ 
0 \ar[r] & R/pR \ar[d]_= \ar[r]^{\cdot p^2} & R/p^3 R \ar[d]_{\cdot p} \ar[r] & R/p^2 R  \ar[d]_{\cdot p} \ar[r] & 0 \\ 
0 \ar[r] & R/pR \ar[d] \ar[r]^{\cdot p^3} & R/p^4 R \ar[d] \ar[r] & R/p^3 R  \ar[d] \ar[r] & 0 \\
& \vdots & \vdots & \vdots & 
}\]
}
\begin{center}
\caption{}
\label{Figure1}
\end{center}
\end{figure}

%
%
%
%
%
%
%
%
%
%
%

By \cite[Example $5.10$]{Bockstein}, the Bockstein homomorphism 
$ H^{3}_I(R/p^\ell R)\to H^{4}_I(R/p^\ell R)$ is nonzero, so  
$H^{3}_I(R/p^2 R) \FDer{\pi}  H^{3}_I(R/p R)$ is not surjective
by the isomorphism of sequences given in Figure \ref{Figure2}. Therefore,
$\Ann_{H^{3}_I(R_p /R)} p^2R \FDer{\cdot p} \Ann_{H^{3}_I(R_p /R)} pR$ is not surjective, so that
$H^{3}_I(R_p /R) \FDer{\cdot p} H^{3}_I(R_p /R)$ is also not surjective.

\begin{figure}[h!]
{
\[ \xymatrix@R=.65cm{ \ar[r] 
0 & H^{3}_I(R/p R) \ar[d] \ar[r]^{\cdot p} & H^{3}_I(R/p^2 R)  \ar[d] \ar[r]^{\pi} & H^{3}_I(R/p^2 R)   \ar[d] \ar[r] & 0 \\ 
0 \ar[r] & \Ann_{H^{3}_I(R_p / R)} pR \ar[r] &  \Ann_{H^{3}_I(R_p /R)} p^2R \ar[r]^{\cdot p} & \Ann_{H^{3}_I(R_p /R)} pR  \ar[r] & 0 \\ 
}\]
}
\begin{center}
\caption{}
\label{Figure2}
\end{center}
\end{figure}

%
%
%
%

\end{proof}

\begin{Rem}\label{Koszul}
Let $R=\FF_2[y_1,\ldots,y_5]$, and let $J=(y_1 y_2, y_2 y_3,y_3 y_4, y_4 y_5,y_5 y_1).$
Then 
$J=(y_2,y_3,y_5)\cap (y_1,y_3,y_4)\cap (y_1,y_2,y_4)\cap (y_1,y_3,y_5)\cap (y_2,y_4,y_5).$
Now, $R/J$ is a graded Cohen-Macaulay ring of dimension $2$, where the classes of
$y_1+y_2+y_3$ and $y_1+y_4+y_5$ form a homogeneous system of parameters.
Then $H^i_J(R)\neq 0$ if only if $i=3$ \cite[Proposition 4.1]{P-S}.  (See \cite[Proposition $3.1$]{MontanerAdv} for an analog in characteristic zero.)
\end{Rem}
\begin{Lemma}\label{Supp}
Consider $R$, $p$, and $I$ as in Notation \ref{Notation}. Then
$ H^{4}_I\left(R/pR\right)$ is supported only at the maximal ideal
$(p,x_1,\ldots , x_6)$.
\end{Lemma}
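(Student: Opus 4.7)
The plan is to reduce the claim to showing that $H^4_I(R/pR)[x_i^{-1}] = 0$ for each variable $x_i$, $i = 1, \ldots, 6$. Indeed, $M[x_i^{-1}] = 0$ forces $\Supp(M) \subseteq V(x_i)$, so if this vanishing holds for every $i$ then $\Supp H^4_I(R/pR) \subseteq \bigcap_{i=1}^6 V(x_i) = \{(p,x_1,\ldots,x_6)\}$, which is what we want. Because $p \in I$, the extended ideal $I' := I(R/pR) \subseteq R/pR = \FF_2[x_1, \ldots, x_6]$ is the squarefree monomial ideal generated by the ten listed cubics, and by flatness of localization
\[ H^4_{I'}(R/pR)[x_i^{-1}] \cong H^4_{I'[x_i^{-1}]}\bigl((R/pR)[x_i^{-1}]\bigr). \]

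The next step is a direct description of $I'[x_i^{-1}]$. Of the ten generators of $I'$, exactly five contain $x_i$, and multiplying each by the unit $x_i^{-1}$ produces a degree-two monomial $x_j x_k$ in the remaining variables. A quick inspection shows that each of the other five generators (those not involving $x_i$) is divisible by one of these degree-two monomials, and is therefore redundant in $I'[x_i^{-1}]$. For $i = 1$, for example, one obtains the five monomials $x_2 x_3,\ x_2 x_4,\ x_3 x_5,\ x_4 x_6,\ x_5 x_6$, which are precisely the edges of the $5$-cycle $2\text{--}3\text{--}5\text{--}6\text{--}4\text{--}2$, and each of $x_2 x_3 x_6,\ x_2 x_4 x_5,\ x_2 x_5 x_6,\ x_3 x_4 x_5,\ x_3 x_4 x_6$ is divisible by exactly one of these edges. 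The corresponding check for the remaining five values of $i$ is essentially identical, and reflects the vertex-transitive symmetry of the simplicial complex on $\{1,\ldots,6\}$ whose Stanley-Reisner ideal is $I'$ (a minimal triangulation of the real projective plane).

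Thus for every $i$, $I'[x_i^{-1}]$ is the extension to $(R/pR)[x_i^{-1}]$ of the $5$-cycle edge ideal $J \subseteq \FF_2[x_j : j \neq i]$. Since $(R/pR)[x_i^{-1}]$ is flat over $\FF_2[x_j : j \neq i]$, local cohomology commutes with this base change, and Remark \ref{Koszul} gives $H^k_J\bigl(\FF_2[x_j : j \neq i]\bigr) = 0$ for $k \neq 3$. In particular the $k = 4$ piece vanishes, so $H^4_{I'[x_i^{-1}]}\bigl((R/pR)[x_i^{-1}]\bigr) = 0$. Collecting this over the six variables completes the proof. The only genuine content of the argument is the combinatorial identification of the links as $5$-cycles, which is routine once carried out for a single vertex.
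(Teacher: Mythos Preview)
Your proof is correct and follows essentially the same route as the paper: reduce to showing $H^4_I(\overline{R})_{x_i}=0$ for each $i$, identify the localized ideal as a $5$-cycle edge ideal in five variables, and invoke Remark~\ref{Koszul}. Your reduction step (from $M[x_i^{-1}]=0$ for all $i$ to $\Supp M\subseteq\{m\}$) is slightly more direct than the paper's, which first cites structural results on associated primes of squarefree monomial modules before making the same localization argument.
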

\begin{proof}
Let $\overline{R}$ denote $R/pR.$
Every associated prime in $\Ass_S H^{4}_I (\overline{R})$
has the form $(2,x_{i_1},\ldots, x_{i_j})R$ for some $\{i_1,\ldots i_j\}\subseteq \{1,\ldots, 6\}$
by \cite[Proposition 2.5]{YSq} and \cite[Proposition $2.7$]{YStr}, as $I\overline{R}$ is a square-free monomial ideal.
Then it suffices to prove that $H^{4}_I (\overline{R})_{x_i}=0$ for every $1 \leq i\leq 6.$ 

We first check that $H^{4}_I (\overline{R})_{x_6}=0$. Let $A=\FF_2[x_1,\ldots,x_5]$, and
$$J=(x_1x_2 ,x_2x_3, x_3x_4 ,x_4 x_5, x_5 x_1  )\subseteq A.$$ Now, $A$ and $J$ are as in Remark \ref{Koszul}, so that
$ H^{4}_J(A)=0$. Since $A[x_6]_{x_6}=\overline{R}_{x_6}$ is a flat extension,
$H^{4}_{J} (\overline{R})_{x_6} = H^{4}_{J}(A)\otimes_A \overline{R}_{x_6}=0$.
Note that $J \overline{R}_{x_6}=I\overline{R}_{x_6}$, and so $H^{4}_{I} (\overline{R})_{x_6}= H^{4}_{J}(\overline{R})_{x_6}=0$. 
The proof that $H^{4}_I (\overline{R})_{x_i}=0$ for $1 \leq i\leq 5$ is analogous, and again relies on Remark \ref{Koszul}.
\end{proof}

\begin{Cor}\label{H4}
Take $R$, $p$, and $I$ as in Notation \ref{Notation}. Let $S =
\widehat{R}_m$, where $m$ is the maximal ideal $(p,x_1,\ldots,x_6)$ of $R$.
Then
$$\Coker\left( H^3_I(S_p/S)\FDer{\cdot p} H^3_I(S_p/S)\right)\cong H^4_I(S/pS)\cong E_{S/pS}(\FF_2).$$
\end{Cor}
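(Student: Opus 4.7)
The plan is to combine the long exact sequence obtained from a natural short exact sequence of $D$-modules with the simplicity of $E_{S/pS}(\FF_2)$ as a $D$-module. First, applying $H^j_I(-)$ to the short exact sequence of $D(S,V)$-modules
$$0 \to S/pS \to S_p/S \FDer{\cdot p} S_p/S \to 0,$$
where the first arrow sends $\bar{s} \mapsto s/p$ (identifying $S/pS$ with the $p$-torsion of $S_p/S$), yields the relevant portion
$$H^3_I(S_p/S) \FDer{\cdot p} H^3_I(S_p/S) \FDer{\delta} H^4_I(S/pS) \to H^4_I(S_p/S)$$
of the long exact sequence. Exactness at the middle terms will identify $\Coker(\cdot p : H^3_I(S_p/S))$ with the image of $\delta$, a $D(S,V)$-submodule of $H^4_I(S/pS)$.

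Next, I would identify $H^4_I(S/pS)$ explicitly. By Lemma \ref{Supp}, $H^4_I(\bar R)$ is supported only at the maximal ideal $(x_1,\ldots,x_6)\bar R$ of $\bar R := R/pR$; hence, after localizing at $m$ and completing, $H^4_I(S/pS)$ is a finite-length $D(S/pS,\FF_2)$-module supported at the maximal ideal of $S/pS$. Feeding this into the Lyubeznik number computation of \`Alvarez Montaner and Vahidi \cite{LyuNumMontaner} for the Stanley-Reisner ring of the minimal triangulation of $\mathbb{RP}^2$ in characteristic $2$ will give the isomorphism $H^4_I(S/pS) \cong E_{S/pS}(\FF_2)$.

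To conclude, I would verify that the cokernel is nonzero and then invoke simplicity. By Proposition \ref{NotSurj}, $\Coker(\cdot p : H^3_I(R_p/R))$ is nonzero; since this cokernel embeds into the $m$-torsion module $H^4_I(\bar R)$ via the analogous long exact sequence over $R$, it is itself $m$-torsion, and faithful flatness of $R_m \to S$ preserves non-vanishing. Since the $D(S,V)$-action on $H^4_I(S/pS)$ factors through $D(S/pS, \FF_2)$, and $E_{S/pS}(\FF_2)$ is simple as a $D(S/pS, \FF_2)$-module, any nonzero $D(S,V)$-submodule must equal all of $H^4_I(S/pS)$, completing the argument.

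The main obstacle will be step two: Lemma \ref{Supp} together with the general structure theory for finite-length $D$-modules supported at a maximal ideal narrows $H^4_I(S/pS)$ down to a finite direct sum of copies of $E_{S/pS}(\FF_2)$, but pinning the multiplicity at exactly one requires the \`Alvarez Montaner-Vahidi computation for this specific Stanley-Reisner ring.
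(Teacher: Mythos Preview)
Your proposal is correct and follows essentially the same route as the paper: the long exact sequence from $0\to S/pS\to S_p/S\xrightarrow{\cdot p}S_p/S\to 0$, Lemma~\ref{Supp} plus the \`Alvarez Montaner--Vahidi computation to identify $H^4_I(S/pS)\cong E_{S/pS}(\FF_2)$, Proposition~\ref{NotSurj} for non-vanishing of the cokernel, and simplicity of $E_{S/pS}(\FF_2)$ as a $D(S/pS,\FF_2)$-module to finish. Your added care in transferring non-vanishing from $R$ to $S$ via faithful flatness is a reasonable clarification the paper leaves implicit.
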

\begin{proof}
Note that $S=\widehat{\ZZ}_{Q}[[x_1,\ldots, x_6]]$, the power series ring with coefficients in the $p$-adic integers.
Then $S/pS \cong \FF_2[[x_1,\ldots, x_6]]$, and  $H^{4}_I (S/pS)$ is a $D(S/pS,\FF_2)$-module supported only at $m$ by Lemma \ref{Supp}.  
Thus, $H^{4}_I (S/pS)=E_{S/pS}(\FF_2)$ by \cite[Example $4.6$]{LyuNumMontaner}.
Since $\Coker\left(H^3_I(S_p/S)\FDer{\cdot p} H^3_I(S/pS)\right)$ is nonzero by Proposition \ref{ContraExSurj}, and injects into 
$H^{4}_I (S/pS)=E_{S/pS}(\FF_2)$ by  the long exact sequence 
$$
0\to H^3_I(S/pS)\to H^3_I(S_p/S)) \FDer{\cdot p} H^3_I(S_p/S)\to H^4_I(S/pS))\to\ldots,
$$
we know that $\Coker\left(H^3_I(S_p/S)\FDer{\cdot p} H^3_I(S/pS)\right)$ is a nonzero $D(S/pS,\FF_2)$-submodule of $E_{S/pS}(\FF_2)$. Hence, 
$$\Coker\left( H^3_I(S_p/S)\FDer{\cdot p} H^3_I(S_p/S)\right)\cong H^4_I(S/pS)\cong E_{S/pS}(\FF_2),$$
since $E_{S/pS}(\FF_2)$ is a simple $D(S/pS,\FF_2)$-module.
\end{proof}

\begin{Cor}\label{ContraExSurj}
There exists a regular local ring $S$ of unramified mixed characteristic $p>0$, with an ideal $I$ of $S$ containing $p$, so that the map
$$
H^{4}_I (S) \FDer{\cdot p} H^{4}_I(S)
$$
is not surjective.
\end{Cor}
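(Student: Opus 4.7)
The plan is to reduce the non-surjectivity of multiplication by $p$ on $H^4_I(S)$ to the non-surjectivity on $H^3_I(S_p/S)$, which has already been established in Corollary \ref{H4}. Specifically, I take $S = \widehat{R}_m$ with $R$, $I$, and $p$ as in Notation \ref{Notation}, so that $S$ is a regular local ring of unramified mixed characteristic (isomorphic to $\widehat{\ZZ}_Q[[x_1,\ldots,x_6]]$) and $p \in I$.

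First I would invoke the short exact sequence
\[
0 \to S \to S_p \to S_p/S \to 0
\]
and apply the functor $H^{\bullet}_I(-)$. Since $p \in I$, we have $IS_p = S_p$, hence $H^i_I(S_p) = 0$ for all $i \geq 0$. The resulting long exact sequence then collapses to isomorphisms $H^{i}_I(S_p/S) \cong H^{i+1}_I(S)$ of $S$-modules for every $i \geq 0$. In particular, $H^3_I(S_p/S) \cong H^4_I(S)$, and because this isomorphism arises from a connecting map in a long exact sequence of $S$-modules, it intertwines the multiplication-by-$p$ maps on the two sides.

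Combining this with Corollary \ref{H4}, which identifies $\Coker\bigl(H^3_I(S_p/S) \xrightarrow{\cdot p} H^3_I(S_p/S)\bigr) \cong E_{S/pS}(\FF_2) \neq 0$, we conclude that the induced map $H^4_I(S) \xrightarrow{\cdot p} H^4_I(S)$ also has nonzero cokernel and is therefore not surjective. The only subtlety, which is really a formality, is to confirm that the connecting isomorphism $H^3_I(S_p/S) \cong H^4_I(S)$ is $S$-linear and thus $p$-equivariant; this is immediate because the entire long exact sequence is built from $S$-module maps. No further content is needed beyond citing Corollary \ref{H4} and assembling the above observations.
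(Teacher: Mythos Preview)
Your proposal is correct and follows essentially the same approach as the paper: both take $S=\widehat{R}_m$ with $R$, $p$, $I$ as in Notation~\ref{Notation}, invoke Corollary~\ref{H4}, and conclude that multiplication by $p$ on $H^4_I(S)$ has nonzero cokernel. The only difference is that you make explicit the identification $H^3_I(S_p/S)\cong H^4_I(S)$ coming from the vanishing of $H^i_I(S_p)$ (the analog for $S$ of Remark~\ref{Rem1}), which the paper's one-line proof leaves implicit.
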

\begin{proof}
Again, consider $R$, $p$, and $I$ as in Notation \ref{Notation}.
Let $S=\widehat{R}_m$, where $m = (p,x_1,\ldots,x_6)R$.
Then by Corollary \ref{H4}, $\Coker\left(H^{4}_I (S) \FDer{\cdot p} H^{4}_I(S)\right)\cong E_{S/pS}(\FF_2) \neq 0.$
\end{proof}

\begin{Prop} \label{LyuTableCE}
Take $R$, $p$, and $I$ as in Notation \ref{Notation}. Let $S =
\widehat{R}_m$, where $m = (p,x_1,\ldots,x_6).$
Then \[ \widetilde{\Lambda}(S/IS) = \begin{pmatrix}
 0 & 0 & 0 & 0 \\
 0 & 0 & 0 & 0 \\
 0 & 0 & 0 & 0 \\
 0 & 0 & 0 & 1 \\
\end{pmatrix}.\] 
\end{Prop}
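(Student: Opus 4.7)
The plan is to show $H^q_I(S) = 0$ for every $q \ne 4$, so that the columns $j \ne 3$ of $\NewLyu(S/IS)$ vanish automatically, and then to compute the Bass numbers $\mu_i(m, H^4_I(S))$ of the surviving column via a collapsing hyper-local-cohomology spectral sequence together with Matlis duality. With $n = \dim S = 7$ and $d = \dim(S/IS) = 3$, the table is $4 \times 4$ and each entry equals $\newLyu_{i,j}(S/IS) = \dim_K \Ext^i_S(K, H^{7-j}_I(S))$.

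The vanishing of $H^q_I(S)$ for $q \ne 4$ proceeds as follows. For $q \le 3$, use $\Depth_I(R) = 4$ from Remark \ref{Rem1}, which passes to $S$ by flatness of completion. For $q = 7$, apply Hartshorne-Lichtenbaum vanishing to the complete local domain $S$, valid because $\dim S/IS > 0$. For $q = 5$, the long exact sequence attached to $0 \to S \FDer{\cdot p} S \to S/pS \to 0$, combined with Corollary \ref{H4} (which identifies $\Coker(\cdot p \mid H^4_I(S))$ with all of $H^4_I(S/pS) \cong E_{S/pS}(K)$), forces the connecting map $H^4_I(S/pS) \to H^5_I(S)$ to be zero; consequently $\cdot p$ is injective on the $p$-torsion module $H^5_I(S)$, yielding $H^5_I(S) = 0$. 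For $q = 6$, Hartshorne-Lichtenbaum on $S/pS$ gives $H^6_I(S/pS) = 0$, and together with the vanishing $H^5_I(S/pS) = 0$ (a cohomological-dimension statement for the $\mathbb{RP}^2$ Stanley-Reisner ring in characteristic $2$, cf.\ \cite[Example 4.6]{LyuNumMontaner}), the same long exact sequence makes $\cdot p$ an automorphism of the $p$-torsion module $H^6_I(S)$, forcing $H^6_I(S) = 0$.

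With $H^q_I(S) = 0$ for $q \ne 4$ established, the Grothendieck composition spectral sequence
$$E_2^{p,q} = H^p_m\bigl(H^q_I(S)\bigr) \Longrightarrow H^{p+q}_m(S)$$
collapses on the column $q = 4$, yielding $H^p_m\bigl(H^4_I(S)\bigr) \cong H^{p+4}_m(S)$, which equals $E_S(K)$ for $p = 3$ and vanishes otherwise (since $S$ is regular local of dimension $7$). By Theorem \ref{TeoInjDim}, $H^4_I(S)$ has injective dimension $3$; applying $\Gamma_m$ to a minimal injective resolution $E^\bullet$ of $H^4_I(S)$ produces a complex $E_S(K)^{\mu_0} \to E_S(K)^{\mu_1} \to E_S(K)^{\mu_2} \to E_S(K)^{\mu_3}$ whose differentials have entries in $m$ (by minimality of $E^\bullet$) and whose multiplicities $\mu_i = \newLyu_{i,3}(S/IS)$ are finite by \cite{LyuUMC, Nunez}. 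Its cohomology is $E_S(K)$ in degree $3$ and zero elsewhere, so Matlis duality yields a minimal chain complex of finite free $S$-modules $S^{\mu_3} \to S^{\mu_2} \to S^{\mu_1} \to S^{\mu_0}$ whose homology is $S$ in degree $3$ and zero otherwise. Applying Nakayama's lemma successively from the low-degree end then forces $\mu_0 = \mu_1 = \mu_2 = 0$, and therefore $\mu_3 = 1$, matching the claimed table.

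The main obstacle is the vanishing $H^5_I(S/pS) = 0$: this is a specific characteristic-$2$ cohomological-dimension statement for the minimal triangulation of $\mathbb{RP}^2$. I would appeal to the detailed Lyubeznik-table computation in \cite{LyuNumMontaner} (whose Example $4.6$ already supplies the input $H^4_I(S/pS) \cong E_{S/pS}(K)$ used in Corollary \ref{H4}); alternatively, one can invoke the Lyubeznik/Terai formula for the cohomological dimension of a Stanley-Reisner ideal, using that this triangulation is Alexander self-dual and that $\operatorname{pd}_{S/pS}(S/IS) = 4$ by Auslander-Buchsbaum (since $\Depth(S/IS) = 2$).
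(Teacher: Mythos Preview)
Your proof is correct and follows essentially the same strategy as the paper's: establish $H^q_I(S)=0$ for $q\neq 4$, collapse the Grothendieck spectral sequence $H^p_m H^q_I(S)\Rightarrow H^{p+q}_m(S)$ to identify $H^p_m H^4_I(S)$, and then read off the Bass numbers.

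Two places where you are actually more careful than the paper are worth noting. First, the paper's sentence ``all other $H^i_I(S)$ must vanish, since the only way that $H^i_I(S)\FDer{\cdot p} H^i_I(S)$ is injective is if $H^i_I(S)=0$'' suppresses exactly the issue you isolate: for $i=6$ the injectivity of $\cdot p$ on $H^6_I(S)$ requires the connecting map $H^5_I(S/pS)\to H^6_I(S)$ to vanish, and hence the input $H^5_I(S/pS)=0$. Your Terai/self-duality argument (using that the minimal $\mathbb{RP}^2$ triangulation is Alexander self-dual and $\Depth(\FF_2[\Delta])=2$, so $\text{cd}(I_\Delta)=\text{pd}(\FF_2[\Delta^\vee])=6-2=4$) cleanly supplies this. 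Second, the paper finishes by invoking ``as in the proof of \cite[Lemma 1.4]{LyuDmodules}'' to identify $H^j_m H^4_I(S)$ with $E_S(K)^{\oplus\newLyu_{j,3}}$; but that lemma's proof uses that $E(K)$ is a simple $D$-module, which fails over $D(S,V)$ (cf.\ Proposition~\ref{PropAnn}). Your Matlis-duality-plus-Nakayama argument is the right substitute in mixed characteristic: dualizing the minimal complex $E_S(K)^{\mu_0}\to\cdots\to E_S(K)^{\mu_3}$ (differentials in $m$ by minimality) to a complex of free $S$-modules with homology $S,0,0,0$ and then peeling off ranks from the bottom is exactly what is needed, and avoids any $D$-module simplicity hypothesis.
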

\begin{proof}
We will abuse notation, using ``$I$" to mean $IS$.  
By Corollary \ref{H4} and 
Proposition \ref{NotSurj}, we have the short exact sequence
$$
0\to H^3_I(S/pS)\to H^4_I(S)\FDer{\cdot p} H^4_I(S)\to H^4_I(S/pS)\to 0.
$$ 
We have that $H^4_I(S)\neq 0$ and all other $H^i_I(S)$ must vanish, since the only way that $H^i_I(S)\FDer{\cdot p} H^i_I(S)$ is injective is if $H^i_I(S)=0$.
Since $H^j_i(S)\neq 0$ if and only if $j=4$, the spectral sequence
$$
E^{p,q}_2=H^p_m(H^q_I(S))\Longrightarrow H^{p+q}_{m}(S) = E^{p,q}_{\infty}
$$
converges at the second stage, so that
$H^3_m H^4_I(S) \cong E_S (K)$, and all other $H^j_m H^i_I(S)$ vanish. 
Since all $H^j_m  H^i_I (S)$  are injective $S$-modules, 
$H^j_m H^i_I (S) \cong E_S(K)^{\bigoplus \lambda_{i,j}(S)}$ as in the proof of \cite[Lemma 1.4]{LyuDmodules}, and the result follows.

\end{proof}

\begin{Rem} \label{RmkDiff}
Together, Proposition \ref{LyuTableCE} and \cite[Example 4.6]{LyuNumMontaner} show that the $\newLyuName$ are not always the same as the equal characteristic ones. 
Take $I\subseteq R$ as defined in Notation \ref{Notation}. Let $S_1=\FF_2[[x_1,\ldots,x_6]]$, and $S_2=\widehat{\ZZ}_Q[[x_1,\ldots,x_6]],$ where $Q=2\ZZ.$  By \cite[Example 4.6]{LyuNumMontaner}, the Lyubeznik numbers in equal characteristic $2$ are given by
\[ \Lambda(S_1/IS_1) =\begin{pmatrix}
 0 &  0 & 1 & 0 \\
 0 &  0 & 0 & 0 \\
 0 &  0 & 0 & 1 \\
 0 &  0 & 0 & 1 \\
\end{pmatrix}.\]
On the other hand, by Proposition \ref{LyuTableCE}, the $\newLyuName$ in
in mixed characteristic $2$, 
\[ \widetilde{\Lambda}(S_2/IS_2) = \begin{pmatrix}
 0 & 0 & 0 & 0 \\
 0 & 0 & 0 & 0 \\
 0 & 0 & 0 & 0 \\
 0 & 0 & 0 & 1 \\
\end{pmatrix}.\] 
In particular, this is an example of a negative answer for Question \ref{QuestionCharp}.
\end{Rem}

The computation in Remark \ref{RmkDiff} is related to work in \cite{MontanerAdv}.

\begin{Teo}\label{TeoCE}
There exists a regular local ring $S$ of unramified mixed characteristic $p>0$, and an ideal $I$ of $S$, such that for some $i,j \in \NN$, $$\dim_K\Ext^{j}_S(K,H^{i}_{I} (S))\neq \dim_K\Ext^{j}_{S/pS} (K,H^{i-1}_{IS/pS} (S/pS)).$$
\end{Teo}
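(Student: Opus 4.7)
The plan is to exhibit the counterexample explicitly by combining Proposition \ref{LyuTableCE} with a computation of equal-characteristic Lyubeznik numbers already available in the literature, essentially assembling the pieces built up throughout Section \ref{DifferentNum}. I take $R$, $p=2$, and $I$ as in Notation \ref{Notation}, set $m = (p, x_1, \ldots, x_6)R$, and let $S = \widehat{R}_m$. Then $S$ is an unramified regular local ring of mixed characteristic two and dimension $n = 7$, and in the language of Remark \ref{DefnAgree}, the ideal $IS/pS$ lives in the six-dimensional regular local ring $S/pS \cong \FF_2[[x_1, \ldots, x_6]]$.

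From Proposition \ref{LyuTableCE}, the full $\newLyuNameA$ table $\newLyuNameB$ of $S/IS$ is $\NewLyu(S/IS)$ with every entry zero except $\newLyu_{3,3}(S/IS) = 1$. On the other hand, $IS/pS$ is precisely the Stanley-Reisner ideal analyzed in \cite[Example 4.6]{LyuNumMontaner}, which yields the equal-characteristic Lyubeznik table $\Lambda(S/IS)$ with three nonzero entries, in particular $\lambda_{0,2}(S/IS) = 1$. So the two tables disagree at more than one position.

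To finish, I translate the discrepancy back into the Ext-dimension language of the theorem statement using Remark \ref{DefnAgree}:
$$
\newLyu_{j,\, n-i}(S/IS) = \Dim_K \Ext^{j}_S\bigl(K, H^{i}_{I}(S)\bigr), \qquad \lambda_{j,\, n-i}(S/IS) = \Dim_K \Ext^{j}_{S/pS}\bigl(K, H^{i-1}_{IS/pS}(S/pS)\bigr).
$$
Any entry at which the two tables differ furnishes a valid choice of $(i,j)$; the discrepancy at the $(0,2)$ entry corresponds to $j = 0$ and $i = n - 2 = 5$, and there
$$
\Dim_K \Ext^{0}_S\bigl(K, H^{5}_{I}(S)\bigr) = 0 \neq 1 = \Dim_K \Ext^{0}_{S/pS}\bigl(K, H^{4}_{IS/pS}(S/pS)\bigr),
$$
which is the sought inequality.

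No new obstacle remains at this stage: the genuine work has been carried out upstream, with the hard input being the non-surjectivity of multiplication by $p$ on $H^3_I(R_p/R)$ established in Proposition \ref{NotSurj} via the Bockstein-morphism computation of Singh--Walther, and its propagation to the short exact sequence of Corollary \ref{H4} that forces the mixed-characteristic table to collapse to a single entry. The proof of Theorem \ref{TeoCE} is then simply the observation that the two $4 \times 4$ matrices $\NewLyu(S/IS)$ and $\Lambda(S/IS)$ are distinct, together with the dictionary above pinpointing an admissible pair $(i,j)$.
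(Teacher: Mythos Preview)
Your proof is correct and follows essentially the same approach as the paper: you take the same ring $S=\widehat{R}_m$ from Notation~\ref{Notation}, invoke Proposition~\ref{LyuTableCE} for the mixed-characteristic table and \cite[Example~4.6]{LyuNumMontaner} for the equal-characteristic one, and read off the discrepancy at the $(0,2)$ entry, i.e.\ $(i,j)=(5,0)$. The paper's proof is simply a terser version of exactly this argument.
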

\begin{proof}
Take $R$, $p$, and $I$ as in Notation \ref{Notation}. Let $S$ denote
$\widehat{R}_m$, where $m= (p,x_1,\ldots,x_6).$
Then
 $\dim_K\Ext^{0}_R(K,H^{5}_{I} (S))=0\neq 1=\dim_K\Ext^{0}_{S/pS} (K,H^{4}_{IS/pS} (S/pS)).$
\end{proof}

\section{Further Questions}

\begin{Question}
If $R$ is a $d$-dimensional Cohen-Macaulay local ring containing a field, Kawasaki showed that $\lambda_{d,d}(R)=1$ \cite{Kawasaki}.    If $R$ is a $d$-dimensional Cohen-Macaulay local ring of mixed characteristic, is $\newLyu_{d,d}(R) = 1$?
\end{Question}

\begin{Question}
Does there exist a local ring $R$ of equal characteristic $p>0$ such that for some $i,j\in\NN$, $\lambda_{i,j}(R) \neq \newLyu_{i,j}(R)$, and both are nonzero (cf.\ Remark \ref{RmkDiff}, Theorem \ref{TeoCE})?
\end{Question}

\begin{Question}
Consider a local ring $R$ of characteristic $p>0$ such that $\lambda_{i,j}(R) \neq \newLyu_{i,j}(R)$ for some $i,j\in \NN$.
The vanishing of each invariant in Remark \ref{RmkDiff} suggests that the equal-characteristic invariants might capture some finer information about $R$ than do the $\newLyuName$.  How can we characterize this data?
On the other hand, do the $\newLyuName$ capture properties of $R$ that the equal-characteristic Lyubeznik numbers miss?
\end{Question}

\begin{Question}
Do the $\newLyuName$ have geometric interpretations?
\end{Question}

\section*{Acknowledgments}
We are grateful to Josep \`Alvarez Montaner, 
Daniel Hern\'andez, Mel Hochster, Gennady Lyubeznik, and Felipe P\'erez-Vallejo for their valuable comments and helpful discussions.  
We also thank the $2010$ AMS Mathematical Research Communities program for supporting meetings between the authors.
The first author also thanks the National Council of Science and Technology of Mexico for support through Grant $210916.$

\bibliographystyle{alpha}
\bibliography{References}

\begin{thebibliography}{{\`A}MGLZA03}

\bibitem[{\`A}MGLZA03]{MontanerAdv}
Josep {{\`A}}lvarez Montaner, Ricardo Garc{\'{\i}}a~L{{\'o}}pez, and Santiago
  Zarzuela~Armengou.
\newblock Local cohomology, arrangements of subspaces and monomial ideals.
\newblock {\em Adv. Math.}, 174(1):35--56, 2003.

\bibitem[{\`A}MV]{LyuNumMontaner}
J.~{\`A}lvarez~Montaner and A.~Vahidi.
\newblock Lyubeznik numbers of monomial ideals.
\newblock {\em Preprint}.

\bibitem[BB05]{B-B}
Manuel Blickle and Raphael Bondu.
\newblock Local cohomology multiplicities in terms of \'etale cohomology.
\newblock {\em Ann. Inst. Fourier (Grenoble)}, 55(7):2239--2256, 2005.

\bibitem[Bj{\"o}72]{Bj2}
Jan-Erik Bj{\"o}rk.
\newblock The global homological dimension of some algebras of differential
  operators.
\newblock {\em Invent. Math.}, 17:67--78, 1972.

\bibitem[Coh46]{Cohen}
I.~S. Cohen.
\newblock On the structure and ideal theory of complete local rings.
\newblock {\em Trans. Amer. Math. Soc.}, 59:54--106, 1946.

\bibitem[Cou95]{Coutinho}
S.~C. Coutinho.
\newblock {\em A primer of algebraic {$D$}-modules}, volume~33 of {\em London
  Mathematical Society Student Texts}.
\newblock Cambridge University Press, Cambridge, 1995.

\bibitem[GLS98]{GarciaSabbah}
R.~Garc{\'{\i}}a~L{{\'o}}pez and C.~Sabbah.
\newblock Topological computation of local cohomology multiplicities.
\newblock {\em Collect. Math.}, 49(2-3):317--324, 1998.
\newblock Dedicated to the memory of Fernando Serrano.

\bibitem[Gro67]{EGA}
A.~Grothendieck.
\newblock \'{E}l{\'e}ments de g{\'e}om{\'e}trie alg{\'e}brique. {IV}. \'{E}tude
  locale des sch{\'e}mas et des morphismes de sch{\'e}mas {IV}.
\newblock {\em Inst. Hautes {\'E}tudes Sci. Publ. Math.}, (32):361, 1967.

\bibitem[HL90]{H-L}
C.~Huneke and G.~Lyubeznik.
\newblock On the vanishing of local cohomology modules.
\newblock {\em Invent. Math.}, 102(1):73--93, 1990.

\bibitem[HS93]{Huneke}
Craig~L. Huneke and Rodney~Y. Sharp.
\newblock Bass numbers of local cohomology modules.
\newblock {\em Trans. Amer. Math. Soc.}, 339(2):765--779, 1993.

\bibitem[Kaw00]{Kawasaki2}
Ken-ichiroh Kawasaki.
\newblock On the {L}yubeznik number of local cohomology modules.
\newblock {\em Bull. Nara Univ. Ed. Natur. Sci.}, 49(2):5--7, 2000.

\bibitem[Kaw02]{Kawasaki}
Ken-ichiroh Kawasaki.
\newblock On the highest {L}yubeznik number.
\newblock {\em Math. Proc. Cambridge Philos. Soc.}, 132(3):409--417, 2002.

\bibitem[Lyu93]{LyuDmodules}
Gennady Lyubeznik.
\newblock Finiteness properties of local cohomology modules (an application of
  {$D$}-modules to commutative algebra).
\newblock {\em Invent. Math.}, 113(1):41--55, 1993.

\bibitem[Lyu00a]{Lyu2}
Gennady Lyubeznik.
\newblock Finiteness properties of local cohomology modules: a
  characteristic-free approach.
\newblock {\em J. Pure Appl. Algebra}, 151(1):43--50, 2000.

\bibitem[Lyu00b]{LyuUMC}
Gennady Lyubeznik.
\newblock Finiteness properties of local cohomology modules for regular local
  rings of mixed characteristic: the unramified case.
\newblock {\em Comm. Algebra}, 28(12):5867--5882, 2000.
\newblock Special issue in honor of Robin Hartshorne.

\bibitem[Lyu00c]{LyuInjDim}
Gennady Lyubeznik.
\newblock Injective dimension of {$D$}-modules: a characteristic-free approach.
\newblock {\em J. Pure Appl. Algebra}, 149(2):205--212, 2000.

\bibitem[NB12]{Nunez}
Luis N\'u{\~n}ez-Betancourt.
\newblock Local cohomology modules of polynomial or power series rings over
  rings of small dimension.
\newblock {\em Preprint}, 2012.

\bibitem[NBW12]{NWEqual}
Luis N\'u{\~n}ez-Betancourt and Emily~E. Witt.
\newblock Generalized {L}yubeznik numbers.
\newblock {\em Preprint}, 2012.

\bibitem[PS73]{P-S}
C.~Peskine and L.~Szpiro.
\newblock Dimension projective finie et cohomologie locale. {A}pplications \`a
  la d\'emonstration de conjectures de {M}. {A}uslander, {H}. {B}ass et {A}.
  {G}rothendieck.
\newblock {\em Inst. Hautes \'Etudes Sci. Publ. Math.}, (42):47--119, 1973.

\bibitem[SW11]{Bockstein}
Anurag~K. Singh and Uli Walther.
\newblock Bockstein homomorphisms in local cohomology.
\newblock {\em J. Reine Angew. Math.}, 655:147--164, 2011.

\bibitem[Wal01]{Walther2}
Uli Walther.
\newblock On the {L}yubeznik numbers of a local ring.
\newblock {\em Proc. Amer. Math. Soc.}, 129(6):1631--1634 (electronic), 2001.

\bibitem[Yan00]{YSq}
Kohji Yanagawa.
\newblock Alexander duality for {S}tanley-{R}eisner rings and squarefree
  {$\bold N^n$}-graded modules.
\newblock {\em J. Algebra}, 225(2):630--645, 2000.

\bibitem[Yan01]{YStr}
Kohji Yanagawa.
\newblock Bass numbers of local cohomology modules with supports in monomial
  ideals.
\newblock {\em Math. Proc. Cambridge Philos. Soc.}, 131(1):45--60, 2001.

\bibitem[Zha07]{W}
Wenliang Zhang.
\newblock On the highest {L}yubeznik number of a local ring.
\newblock {\em Compos. Math.}, 143(1):82--88, 2007.

\bibitem[Zho98]{Zhou}
Caijun Zhou.
\newblock Higher derivations and local cohomology modules.
\newblock {\em J. Algebra}, 201(2):363--372, 1998.

\end{thebibliography}

\vspace{.3cm}
\small{
{\sc Department of Mathematics, University of Michigan, Ann Arbor, MI $48109$-$1043,$ USA.}

{\it Email address:}  \texttt{luisnub@umich.edu}

\vspace{.3cm}

{\sc Department of Mathematics, University of Minnesota,  Minneapolis, MN $55455,$ USA.}

{\it Email address:}  \texttt{ewitt@umn.edu}
}

\end{document}